\newcommand{\Z}{\mathbb{Z}}
\newcommand{\Q}{\mathbb{Q}}
\newcommand{\Sgothique}{\mathfrak{S}}
\newcommand{\Agothique}{\mathfrak{A}}
\newtheorem{thm}{Theorem}[section]
\newtheorem{Lemma}[thm]{Lemma}
\newtheorem{rmq}[thm]{Remark}
\newtheorem{prop}[thm]{Proposition}
\newtheorem{Conjecture}[thm]{Conjecture}
\begin{document}
\title[]{On the height of some generators of Galois extensions with big Galois group}
\author{Jonathan Jenvrin}
\address{Jonathan Jenvrin, Univ. Grenoble Alpes, CNRS, IF, 38000 Grenoble, France. \textit{E-mail adress :} \href{mailto:jonathan.jenvrin@univ-grenoble-alpes.fr}
{\texttt{jonathan.jenvrin@univ-grenoble-alpes.fr}}
}
\date{}
\begin{abstract} We study the height of generators of Galois extensions of the rationals having the alternating group $\mathfrak{A}_n$ as Galois group. We prove that if such generators are obtained from certain, albeit classical, constructions, their height tends to infinity as $n$ increases. This provides an analogue of a result by Amoroso, originally established for the symmetric group.
\end{abstract}

\maketitle

\section{Introduction}
In this article, we let $\overline{\mathbb{Q}}$ be a fixed algebraic closure of $\mathbb{Q}$.
Given $\alpha\in \overline{\mathbb{Q}}$ of degree $d$, we denote by $M(\alpha)$ its Mahler measure, defined as
$$M(\alpha) = |a| \prod_{i=1}^{d} \max\left(1, |\alpha_{i}|\right) \geq 1$$
where $a$ is the leading coefficient of the minimal polynomial of $\alpha$ over $\mathbb{Z}$, and $\alpha_{1}, \dots, \alpha_{d}$ are the conjugates of $\alpha$.
The logarithmic Weil height of $\alpha$, or, for short, the height of $\alpha$ is then given by
$$h(\alpha) = \frac{\log(M(\alpha))}{d}.$$

While, by Kronecker's theorem, it is well-known that $h(\alpha) = 0$ if and only if $\alpha = 0$ or $\alpha$ is a root of unity,  Lehmer's conjecture predicts the existence of a positive constant $c > 0$ such that
$$ h(\alpha) \ge \frac{c}{d}$$
whenever $h(\alpha)$ is not zero.
The conjecture has been proved for various classes of algebraic numbers, but it is still open in general. 
The most notable progress toward Lehmer's conjecture is Dobrowolski's result \cite[Theorem 1]{Dobrowolski}, which proves that, if $h(\alpha)\neq 0$, then
\[ h(\alpha) \ge \frac{c}{d}\left( \frac{\log\log d}{\log d} \right)^{3}\]
(one can take $c=1/4$ as shown in \cite[Theorem on p. 83]{voutier}). 

If, on the one hand, Dobrowolski's theorem stands as the sole unconditional result on this problem, on the other hand, it is possible to show that specific classes of algebraic numbers satisfy even stronger variants of Lehmer's conjecture, such as the Bogomolov property introduced by Bombieri and Zannier in  \cite{bombieriZannier2001note}:  a set of algebraic numbers $S$ satisfies the \emph{Bogomolov property (B)} if there exists a constant $c=c(S)>0$ such that for every $\alpha\in S$ either $h(\alpha)=0$ or $h(\alpha)\geq c$.

Property (B) holds, for instance, for abelian extensions of number fields, as shown in \cite{Amoroso_Zannier-RelativeDobrowolskiLowerBoundOverAbelianExtensions,Amo_Zannier-DihedralCase} and for the field of totally real algebraic numbers, as proved in  \cite[Corollary 1]{Schinzel-TotalementReel}.

A set of algebraic numbers which has attracted the attention in this respect in recent years, is the following 
\[S_{\mathrm{Gal}}=\{\alpha\in \overline{\Q}\mid \Q(\alpha)/\Q\text{ is Galois }\}.\]

While Amoroso and David  proved that Lehmer's conjecture holds for the elements of this set (see \cite[Corollary 1.7]{Amoroso_David-LehmerPbEnDimensionSuperieur}), later Amoroso and Masser \cite[Theorem 3.3]{Amo_MasserheightInGaloisExtensions} showed that $S_{\mathrm{Gal}}$ satisfies even the following stronger result: 
for any $\epsilon > 0$, there exists a positive effective constant $c(\epsilon)$ such that, for every $\alpha \in S_{\mathrm{Gal}}$ of degree $d$ over $\Q$ and not a root of unity,  one has
$$h(\alpha) \ge c(\epsilon)d^{-\epsilon}.$$

A result this strong might prompt the question, of whether the set $S_\mathrm{Gal}$ satisfies Property (B) or not.

A quite natural way to tackle this issue is by fixing the Galois group of $\Q(\alpha)/\Q$. 
As recalled before, the answer is known to be positive when such group is abelian (see also \cite{Amoroso_Robert-LowerBoundForTheHeightInAbelianExtensions}), while the dihedral case has been considered in \cite[Corollary 1.3]{Amo_Zannier-DihedralCase}.

Motivated by a question posed by Smyth (see \cite[Problem 21]{BIRSworkshop}), Amoroso studied certain classes of generators of Galois extensions whose Galois group is  the full symmetric group $\mathfrak{S}_{n} $ and proved the following: 
\begin{thm}[{\cite[Theorems 1.1 and 1.2]{amoroso2018mahler}}]\label{thm main}
Let $\beta$ be an algebraic integer of degree $n\geq 3$, let $\beta_{1}, \ldots, \beta_{n}$ be its conjugates and assume that $\mathrm{Gal}(\mathbb{Q}(\beta_{1}, \ldots, \beta_{n})/\mathbb{Q}) = \mathfrak{S}_{n}$. 

Let $a_{1},\ldots,a_{n}$ be integers. Then: 
\begin{enumerate}[(i)]
    \item\label{(i)} If $\beta$ is a unit, $\alpha=\beta_{1}^{a_{1}}\ldots \beta_{n}^{a_{n}}$ is a generator of $\Q(\beta_{1},\ldots,\beta_{n})/\Q$ if and only if $a_{1},\ldots,a_{n}$ are pairwise distinct. In this case $h(\alpha)$ tends to infinity as $n$ increases.
    \item $\alpha'=a_{1}\beta_{1}+\ldots+a_{n}\beta_{n}$ is a generator of $\Q(\beta_{1},\ldots,\beta_{n})/\Q$ if and only if $a_{1},\ldots,a_{n}$ are pairwise distinct. In this case $h(\alpha')$ tends to infinity as $n$ increases.
\end{enumerate}
\end{thm}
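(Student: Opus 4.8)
The plan is to treat each of (i) and (ii) by separating the essentially formal primitivity criterion from the analytic height bound. Write $L=\Q(\beta_{1},\dots,\beta_{n})$ and let $\mathfrak S_n=\mathrm{Gal}(L/\Q)$ act on indices by $\sigma(\beta_i)=\beta_{\sigma(i)}$. By the Galois correspondence $\alpha$ (resp.\ $\alpha'$) generates $L$ iff its stabiliser is trivial; since $\sigma(\alpha)/\alpha=\prod_i\beta_i^{\,c_i(\sigma)}$ (resp.\ $\sigma(\alpha')-\alpha'=\sum_i c_i(\sigma)\beta_i$) with $c(\sigma)=(a_{\sigma^{-1}(i)}-a_i)_i$ of zero coordinate sum, it suffices to know that the lattice of multiplicative relations among $\beta_1,\dots,\beta_n$ (for $\beta$ a unit), resp.\ the $\Q$-space of additive relations among them, is contained in $\Z\cdot(1,\dots,1)$, resp.\ $\Q\cdot(1,\dots,1)$. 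This is a representation-theoretic point: such a relation module is $\mathfrak S_n$-stable, hence — as the standard $(n-1)$-dimensional representation is irreducible — commensurable with one of $\{0\}$, the line through $(1,\dots,1)$, the zero-sum hyperplane, or the whole space; the last two are excluded because $\beta$ has degree exactly $n\ge 3$ with distinct conjugates, and $L$ contains only finitely many roots of unity (its maximal abelian subextension being at most quadratic). Granting this, zero coordinate sum forces $c(\sigma)=0$ whenever the $a_i$ are distinct, i.e.\ $\sigma=1$; conversely if $a_i=a_j$ the transposition $(i\,j)$ fixes both $\alpha$ and $\alpha'$. This gives the stated equivalences.

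For the height in (i): $\alpha$ is a unit of degree $n!$, so it has no finite-place contribution; writing $\lambda_i=\log|\beta_i|$ for a fixed complex embedding one has $\sum_i\lambda_i=\log|N_{\Q(\beta)/\Q}\beta|=0$, so the logarithms of the absolute values of the conjugates of $\alpha$ are the $n!$ reals $\langle a,\sigma\lambda\rangle:=\sum_i a_i\lambda_{\sigma(i)}$ ($\sigma\in\mathfrak S_n$), which sum to $0$. Hence
\[ h(\alpha)=\frac{1}{n!}\sum_{\sigma}\bigl(\langle a,\sigma\lambda\rangle\bigr)^{+}=\frac{1}{2\,n!}\sum_{\sigma}\bigl|\langle a,\sigma\lambda\rangle\bigr|=\tfrac12\,\mathbb{E}_{\sigma}\bigl|\langle a,\sigma\lambda\rangle\bigr|. \]
Since $\langle a,\sigma\lambda\rangle=\sum_i(a_i-\bar a)\lambda_{\sigma(i)}$, a direct computation gives $\mathbb{E}_{\sigma}\langle a,\sigma\lambda\rangle^{2}=\frac{1}{n-1}\bigl(\sum_i(a_i-\bar a)^2\bigr)\|\lambda\|_{2}^{2}$, and $\sum_i(a_i-\bar a)^2\ge(n^{3}-n)/12$ for $n$ distinct integers, so these $n!$ numbers have standard deviation $\gg n\|\lambda\|_{2}$.

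The crux is to upgrade this to a lower bound for the mean absolute value, i.e.\ $h(\alpha)\ge c\,n\|\lambda\|_{2}$ with $c>0$ absolute: a combinatorial anti-concentration statement saying that, because the distinct integers $a_i$ are genuinely spread out, $\langle a,\sigma\lambda\rangle$ is not abnormally concentrated near $0$. I would prove it by a combinatorial central limit theorem (Hoeffding/Bolthausen), treating separately the case where $\lambda$ itself is concentrated on essentially one $\pm$ pair — as for a Salem $\beta$ — which a direct computation handles with the even stronger bound $h(\alpha)\gg n\log M(\beta)$. Once $h(\alpha)\gg n\|\lambda\|_{2}$ is available, Cauchy--Schwarz gives $n\|\lambda\|_{2}\ge\sqrt n\,\|\lambda\|_{1}=2\sqrt n\,\log M(\beta)$, while Dobrowolski's theorem applied to $\beta$ (degree $n$, not a root of unity since $\mathfrak S_n$ is non-abelian) gives $\log M(\beta)=n\,h(\beta)\gg\bigl(\tfrac{\log\log n}{\log n}\bigr)^{3}$; hence $h(\alpha)\gg\sqrt n\,\bigl(\tfrac{\log\log n}{\log n}\bigr)^{3}\to\infty$. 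I expect this uniform anti-concentration estimate to be the main obstacle.

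For (ii) the scheme is similar but $\alpha'$ is only an algebraic integer, of degree $n!$, so $h(\alpha')\ge\frac{1}{n!}\sum_{\sigma}\log^{+}\bigl|\sum_i a_i\beta_{\sigma(i)}\bigr|$ and one must exhibit many large archimedean conjugates. Again I would start from a second moment: $\mathbb{E}_{\sigma}\bigl|\sum_i a_i\beta_{\sigma(i)}\bigr|^{2}$ has diagonal part $\|a\|_{2}^{2}\cdot\tfrac1n\sum_k|\beta_k|^{2}\ge\|a\|_{2}^{2}$ (by AM--GM and $|N_{\Q(\beta)/\Q}\beta|\ge1$), with the off-diagonal part controlled — or, alternatively, via a Vandermonde-type non-vanishing argument for a suitable set of $n$ conjugates — using that the Galois hypothesis precludes the near-root-of-unity behaviour of the $\beta_i$ that would permit massive cancellation; together with a fourth-moment bound (where the spread of the $a_i$ again prevents concentration) this forces a positive proportion of the conjugates to be of size $\gg\|a\|_{2}\gg n^{3/2}$, whence $h(\alpha')\gg\log n\to\infty$. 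The anti-concentration point, now for $\sum_i a_i\beta_{\sigma(i)}$ rather than $\sum_i a_i\lambda_{\sigma(i)}$, is again the delicate step.
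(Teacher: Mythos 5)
Your primitivity criterion via the representation theory of $\mathfrak{S}_n$ is a reasonable sketch and matches the substance of the paper's argument (compare Propositions~\ref{prop pour montrer le point (1)} and~\ref{prop version additive}, which are essentially Smyth's lemma): the key input is indeed that any $\mathfrak{S}_n$-stable relation module among the conjugates sits inside $\Q\cdot(1,\ldots,1)$, justified via simplicity arguments and the finiteness of roots of unity in the splitting field. But both height lower bounds are where the proposal departs from the paper's route and where genuine gaps remain.

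For (i), your identity $h(\alpha)=\tfrac12\mathbb{E}_\sigma|\langle a,\sigma\lambda\rangle|$ is exactly Proposition~\ref{Mahler-sn} in disguise, but you then propose a second-moment computation followed by a combinatorial anti-concentration estimate to pass from $\mathbb{E}\,X^2$ to $\mathbb{E}|X|$, and you explicitly flag that estimate as ``the main obstacle'' without supplying it. This is not a minor hole: Berry--Esseen-type bounds in the combinatorial CLT carry error terms governed by third moments, which blow up exactly in the concentrated regime (Salem-like $\beta$, where $\lambda$ lives on essentially one $\pm$ pair), so one would need casework with no clean dividing line between the CLT regime and the degenerate one. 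The paper (following Amoroso) avoids anti-concentration entirely by symmetrization: $x\mapsto s_n(x,y)$ is convex and invariant under the stabiliser of the sign pattern of $x$, so one may replace $x$ (and then, by symmetry, $y$) by its two-valued average $z^{(n,h)}$ over that stabiliser (Lemma~\ref{formule des classes}, Proposition~\ref{(2.1)}); the quantity $s_n(z^{(n,h)},z^{(n,k)})$ then has a closed form as a ratio of binomial coefficients (Lemma~\ref{Lemma grosse égalité}) whose asymptotics give the $\sqrt{n}$ gain directly and uniformly over all $\lambda$, with no probabilistic input at all.

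For (ii), the gap is more serious. The off-diagonal part of $\mathbb{E}_\sigma\bigl|\sum_i a_i\beta_{\sigma(i)}\bigr|^2$ equals $\bigl((\sum_i a_i)^2-\|a\|_2^2\bigr)\cdot\bigl(|\mathrm{Tr}(\beta)|^2-\sum_k|\beta_k|^2\bigr)\big/\bigl(n(n-1)\bigr)$, which has indeterminate sign and no a priori bound in terms of $\|a\|_2$ and $\sum_k|\beta_k|^2$ alone, so the assertion that it is ``controlled'' is unsupported. Even granting a good second-moment bound, passing to ``a positive proportion of conjugates are $\gg\|a\|_2$'' is once more an anti-concentration statement you do not prove. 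The paper's (and Amoroso's) route for the additive case is instead fully algebraic: one bounds $M(\alpha')$ from below via the product $\Delta=\prod_\tau\prod_{\sigma\in A_\tau}|\alpha_{\sigma\tau}-\alpha_\sigma|$ over transpositions, which factors exactly as $\bigl(|V(\mathfrak a)|\,|\mathrm{disc}(\beta)|^{1/2}\bigr)^{|\Lambda_{n-2}|}$ with $V(\mathfrak a)=\prod_{i<j}(a_j-a_i)$ a Vandermonde in distinct integers and $|\mathrm{disc}(\beta)|\ge1$ by integrality; this bypasses moments entirely. You gesture at ``a Vandermonde-type non-vanishing argument'' as an alternative, which is indeed the right idea, but it is precisely the argument that would need to be built.
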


This result led Amoroso to propose the following conjecture:
\begin{Conjecture}[{ \cite[Conjecture 1.3]{amoroso2018mahler}}]\label{conj-Amo}
Let $\alpha \in \overline{\mathbb{Q}}$ be a generator of a Galois extension of degree $d = n!$ of Galois group $\mathfrak{S}_{n}$. Then $h(\alpha) \ge c(d)$
with $c(d)$ a function tending to infinity with $d$.
\end{Conjecture}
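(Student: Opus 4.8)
\emph{Reduction to the resolvent picture.} First I would bring the arbitrary generator into the concrete shape behind Theorem~\ref{thm main}. Since $K=\Q(\alpha)$ is Galois with group $\mathfrak{S}_n$, it is the splitting field of the monic minimal polynomial $g\in\Z[x]$ of some algebraic integer generating $K^{\mathfrak{S}_{n-1}}$, a field of degree $n$; let $\theta_1,\dots,\theta_n$ be the roots of $g$, which are then algebraic integers, and note $K=\Q(\theta_1,\dots,\theta_n)$. Because $\alpha\in K$ there is a rational function $R\in\Q(x_1,\dots,x_n)$ with $\alpha=R(\theta_1,\dots,\theta_n)$; the $d=n!$ conjugates of $\alpha$ are the $\alpha^{(\tau)}:=R(\theta_{\tau(1)},\dots,\theta_{\tau(n)})$ for $\tau\in\mathfrak{S}_n$, and the hypothesis $\Q(\alpha)=K$ is precisely that $\alpha^{(\tau)}\ne\alpha$ for all $\tau\ne e$. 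Thus the entire Galois orbit of $\alpha$ is manufactured out of just $n$ numbers, each with only $n$ conjugates — the \emph{low-dimensionality} of $\mathfrak{S}_n$-extensions that any proof must exploit — and Theorem~\ref{thm main} is exactly the case where $R$ is a (Laurent) monomial, with $g$ defining a unit, or a linear form.

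\emph{The engine.} Second, as the driving estimate I would reopen the Dobrowolski-type argument of Amoroso--Masser (and Amoroso--David), which already yields $h(\alpha)\ge c(\epsilon)d^{-\epsilon}$ for any generator of a Galois field of degree $d$; the goal is to extract from it a bound that \emph{grows} with $n$ rather than decays with $d$. The gain should come from the subgroup structure of $\mathfrak{S}_n$: by Chebotarev, for a proportion $1/n$ of the rational primes $p$, the prime $p$ is unramified in $K$ and $\sigma=\operatorname{Frob}_{\mathfrak p}$ at any $\mathfrak p\mid p$ of $K$ is an $n$-cycle, and then the single congruence $\sigma(\alpha)\equiv\alpha^{p}\pmod{\mathfrak p}$ ties together a whole block of $n$ conjugates $\alpha^{(\tau)},\alpha^{(\sigma\tau)},\dots,\alpha^{(\sigma^{n-1}\tau)}$ through one $p$-power relation. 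Feeding these congruences — much more plentiful, relative to the size of the auxiliary polynomial, than in the generic Galois situation — into an interpolation determinant in the style of Dobrowolski is what should turn the loss $d^{-\epsilon}$ into a true gain. A complementary route is an induction along the tower $\Q\subset\Q(\theta_1)\subset\Q(\theta_1,\theta_2)\subset\cdots\subset K$ of partial splitting fields, of relative degrees $n,n-1,\dots,1$, bounding at each step how far $h(\alpha)$ can drop below the analogous quantity one level down.

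\emph{The main obstacle.} The hard part, as always for Bogomolov-type results, is uniformity over $R$: its degree and the heights of its coefficients are unbounded, and a priori $R$ might be arranged so that at every place $v$ of $K$ almost all of the $\alpha^{(\tau)}$ have $|\alpha^{(\tau)}|_v$ very close to $1$, which is essentially the only way for $h(\alpha)$ to stay small. Ruling this out seems to require controlling the heights of the $\theta_i$ themselves in a way that goes well beyond Dobrowolski — which in degree $n$ only gives $h(\theta_i)\to 0$ — that is, a uniform or relative Lehmer/Bogomolov lower bound for bounded-degree extensions, or a substitute exploiting that the $\theta_i$ are the roots of a single polynomial whose splitting field already has the maximal Galois group $\mathfrak{S}_n$. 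Accordingly I would first aim at the intermediate targets: (a) upgrading Theorem~\ref{thm main} from monomials and linear forms to arbitrary $R$ of \emph{bounded} degree and coefficient size, where Amoroso's combinatorial discrepancy argument still has a chance of working without any Bogomolov input; and (b) isolating precisely which height lower bound for the fields $\Q(\theta_i)$ the general case requires, thereby pinpointing what separates Conjecture~\ref{conj-Amo} from the current state of the art.
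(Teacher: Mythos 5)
The statement you were given is Conjecture \ref{conj-Amo}: it is an \emph{open conjecture}, recorded in the paper (quoting Amoroso) precisely because no proof is known, and the paper does not attempt one. So there is no proof of it in the paper to compare yours against, and what you have written is, by its own wording, a research programme rather than a proof: every load-bearing step is conditional (``is what should turn the loss $d^{-\epsilon}$ into a true gain'', ``seems to require'', ``I would first aim at the intermediate targets''). As it stands, your argument establishes nothing beyond what Amoroso--Masser already prove, namely $h(\alpha)\ge c(\epsilon)d^{-\epsilon}$, which decays rather than grows with $d$.

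The concrete gap is the one you half-identify yourself. Once you write $\alpha=R(\theta_1,\dots,\theta_n)$, the degree and the height of the coefficients of $R$ are completely uncontrolled, and the Dobrowolski-type interpolation machinery degrades with exactly these parameters; the Frobenius congruence $\sigma(\alpha)\equiv\alpha^{p}\pmod{\mathfrak p}$ is already the input of the Amoroso--Masser argument and is known to yield only the $d^{-\epsilon}$ bound, not a bound growing with $n$. Turning it into growth would require a uniform (Bogomolov-type) lower bound for the heights in the degree-$n$ subfields $\Q(\theta_i)$, or a substitute for it --- and that is precisely the open problem, not a step you can take for granted. Note also that the paper's actual results are orthogonal to your plan: Theorems \ref{thm 1.1} and \ref{thm 1.2} (like Theorem \ref{thm main} for $\mathfrak{S}_n$) prove the desired growth only for the special generators $\prod_i\beta_i^{a_i}$ and $\sum_i a_i\beta_i$, where the explicit combinatorics of the integers $a_i$ replaces the missing Bogomolov input; your intermediate target (a) is essentially what those theorems already accomplish. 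You should present your text as a discussion of possible approaches to the conjecture, not as a proof of it.
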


The goal of this article is to investigate whether some analogue of Theorem \ref{thm main} holds for other groups. Our first result provides a lower bound for the height of generators of Galois extensions as in \eqref{(i)} with no assumption on the structure of the Galois group of the extension.

\begin{thm} \label{generalresult}
    Let $n\geq 3$ be an integer and let $\beta$ be an algebraic integer of degree $n$, with conjugates $\beta_{1}, \ldots, \beta_{n}$. Let $a_{1}, \ldots, a_{n} \in \mathbb{Z}$, and let
    $ \alpha= \beta_{1}^{a_{1}} \ldots \beta_{n}^{a_{n}}.$
  Suppose that $\Q(\alpha)=\Q(\beta_{1},\dots,\beta_{n})$. Then
    $$h(\alpha) \ge  \left({\frac{1}{n}  \left|\sum_{i=1}^{n} a_{i}\right|}\right) \cdot \log \left |N_{\Q(\beta)/\Q}(\beta) \right|.$$

In particular, if $\beta$ is not a unit and if  the sequence $\left(\frac{1}{n} \left|\sum_{i=1}^{n}a_{i} \right| \right)_n$ tends to infinity with $n$, then  $h(\alpha)$   tends also to infinity with $n$.
\end{thm}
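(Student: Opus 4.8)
The plan is to exploit the hypothesis $\Q(\alpha)=\Q(\beta_{1},\dots,\beta_{n})=:L$ in two ways. First, it forces $\deg_{\Q}\alpha=[L:\Q]$, so that the conjugates of $\alpha$ over $\Q$ are exactly the Galois conjugates $\{\sigma(\alpha):\sigma\in\mathrm{Gal}(L/\Q)\}$ and $N_{\Q(\alpha)/\Q}(\alpha)=N_{L/\Q}(\alpha)$. Second, since $L/\Q$ is Galois (it is the splitting field of the minimal polynomial of $\beta$), we can compute $N_{L/\Q}(\alpha)=\prod_{\sigma\in\mathrm{Gal}(L/\Q)}\sigma(\alpha)$ explicitly in terms of $N_{\Q(\beta)/\Q}(\beta)$, using that $\mathrm{Gal}(L/\Q)$ acts transitively on the indices $\{1,\dots,n\}$ because $\beta$ has degree exactly $n$. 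The height of $\alpha$ will then be controlled from below by a suitable normalization of $\bigl|\log|N_{L/\Q}(\alpha)|\bigr|$.

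\textbf{An elementary norm bound for the height.} Let $\gamma\in\overline{\Q}\setminus\{0\}$ have degree $e$ and primitive minimal polynomial $aX^{e}+\dots+a_{0}\in\Z[X]$ with $a\geq1$; then $a_{0}\neq0$ and $|N_{\Q(\gamma)/\Q}(\gamma)|=|a_{0}|/a$. From $M(\gamma)=a\prod_{i}\max(1,|\gamma_{i}|)$ we get $M(\gamma)\geq a\prod_{i}|\gamma_{i}|=|a_{0}|\geq|a_{0}|/a=|N_{\Q(\gamma)/\Q}(\gamma)|$ and, since $|a_{0}|\geq1$, also $M(\gamma)\geq a\geq1/|N_{\Q(\gamma)/\Q}(\gamma)|$. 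Hence $\log M(\gamma)\geq\bigl|\log|N_{\Q(\gamma)/\Q}(\gamma)|\bigr|$, that is,
\[
h(\gamma)\ \geq\ \frac{1}{e}\,\Bigl|\log\bigl|N_{\Q(\gamma)/\Q}(\gamma)\bigr|\Bigr|.
\]
As $\beta\neq0$ (its degree is $n\geq3$), every $\beta_{i}\neq0$, so $\alpha\neq0$, and we apply this with $\gamma=\alpha$ and $e=[L:\Q]=\deg_{\Q}\alpha$.

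\textbf{Computing $N_{L/\Q}(\alpha)$.} The conjugates $\beta_{1},\dots,\beta_{n}$ of $\beta$ form a single orbit under $G:=\mathrm{Gal}(L/\Q)$, so the induced $G$-action on $\{1,\dots,n\}$ is transitive and each point-stabilizer has order $[L:\Q]/n$. Writing $\sigma(\beta_{i})=\beta_{\pi_{\sigma}(i)}$, we have $\sigma(\alpha)=\prod_{i}\beta_{\pi_{\sigma}(i)}^{a_{i}}=\prod_{j}\beta_{j}^{a_{\pi_{\sigma}^{-1}(j)}}$, hence
\[
N_{L/\Q}(\alpha)=\prod_{\sigma\in G}\sigma(\alpha)=\prod_{j=1}^{n}\beta_{j}^{\ \sum_{\sigma\in G}a_{\pi_{\sigma}^{-1}(j)}}.
\]
For fixed $j$, each value $i\in\{1,\dots,n\}$ is attained by $\pi_{\sigma}^{-1}(j)$ for exactly $[L:\Q]/n$ elements $\sigma\in G$ (the corresponding set of $\sigma$ being a coset of the stabilizer of $i$), so $\sum_{\sigma\in G}a_{\pi_{\sigma}^{-1}(j)}=\frac{[L:\Q]}{n}\sum_{i=1}^{n}a_{i}$ for every $j$. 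Therefore
\[
N_{L/\Q}(\alpha)=\Bigl(\prod_{j=1}^{n}\beta_{j}\Bigr)^{\frac{[L:\Q]}{n}\sum_{i}a_{i}}=N_{\Q(\beta)/\Q}(\beta)^{\frac{[L:\Q]}{n}\sum_{i}a_{i}}.
\]

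\textbf{Conclusion.} Since $N_{\Q(\beta)/\Q}(\beta)$ is a nonzero rational integer, $\bigl|\log|N_{L/\Q}(\alpha)|\bigr|=\frac{[L:\Q]}{n}\bigl|\sum_{i}a_{i}\bigr|\log|N_{\Q(\beta)/\Q}(\beta)|$; dividing by $[L:\Q]=\deg_{\Q}\alpha$ and invoking the first step yields the stated bound. For the last assertion, if $\beta$ is not a unit then $\log|N_{\Q(\beta)/\Q}(\beta)|\geq\log2>0$, so the lower bound tends to infinity as soon as $\frac1n\bigl|\sum_{i}a_{i}\bigr|$ does. I do not anticipate a genuine obstacle here; the only points requiring care are the transitivity of $G$ on the indices (which underlies the counting in the second step) and the fact that $\alpha$ need not be an algebraic integer, which is precisely why the first step needs both $M(\gamma)\geq|N_{\Q(\gamma)/\Q}(\gamma)|$ and $M(\gamma)\geq|N_{\Q(\gamma)/\Q}(\gamma)|^{-1}$.
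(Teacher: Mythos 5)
Your proof is correct and follows essentially the same route as the paper: reduce the height to the norm via $M(\alpha)\geq|N_{\Q(\alpha)/\Q}(\alpha)|$, compute $N_{L/\Q}(\alpha)$ by the transitivity counting argument ($|\{\sigma\in G:\sigma(i)=j\}|=|G|/n$), and conclude. The only (cosmetic) difference is how the absolute value $\bigl|\sum a_i\bigr|$ is produced: the paper applies the norm bound to $\alpha$ and then again to $1/\alpha$ using $M(\alpha)=M(1/\alpha)$, while you directly establish the two-sided bound $M(\gamma)\geq\max\bigl(|N_{\Q(\gamma)/\Q}(\gamma)|,\,|N_{\Q(\gamma)/\Q}(\gamma)|^{-1}\bigr)$ from the leading and constant coefficients; these are equivalent.
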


The remainder and most significant part of this article is devoted to proving the analogue of Theorem \ref{thm main} for extensions having Galois group equal to the alternating group $\mathfrak{A}_{n}$. We remark that this case, besides being naturally interesting to examine, can be also consider in some sense the \emph{generic} one. For instance, it has been proved in \cite[Theorem 2]{PolyAvecCoefPetitGroupDeGaloisAn} that, for any integer $\ell\geq 2$, a random polynomial of degree $n$ with i.i.d. random integers coefficients taking values uniformly in $\{1,\ldots,\ell\}$ will have Galois group containing  $\mathfrak{A}_{n}$ with a probability tending to $1$ as $n$ goes to infinity.

Our second main result 
is the analogue of \cite[Theorem 1.1]{amoroso2018mahler} for $ \mathfrak{A}_{n}$, for generators obtained as products of conjugates.
\begin{thm} \label{thm 1.1}
Let $\beta$ be an algebraic number of degree $n \ge 5$, let $\beta_{1}, \ldots, \beta_{n}$ be its conjugates and assume that $\mathrm{Gal}(\mathbb{Q}(\beta_{1}, \ldots, \beta_{n})/\mathbb{Q}) = \mathfrak{A}_{n}$. Let $\alpha= \beta_{1}^{a_{1}} \ldots \beta_{n}^{a_{n}}$ where $a_{1}, \ldots, a_{n} \in \mathbb{Z}$. Then:

\begin{enumerate}[(1)]
\item\label{thm1.1-it1} $\alpha$ is a generator of $\mathbb{Q}(\beta_{1}, \ldots, \beta_{n})/\mathbb{Q}$ if and only if there are at most two distinct indices $i, j$ such that $a_{i} = a_{j}$.

\item\label{thm1.1-it3} If $\alpha$ is a generator of $\mathbb{Q}(\beta_{1}, \ldots, \beta_{n})/\mathbb{Q}$ and $\beta$ is a unit, then

$$h(\alpha) \ge (1+g(n)) \sqrt{\frac{n}{200\pi}}\left(\frac{\log(\log(n))}{\log(n)}\right)^{3}$$
where $g(n)$ tends to 0 as $n$ tends to infinity. 
In particular, $h(\alpha)$ tends to infinity with $n$.
\end{enumerate}
\end{thm}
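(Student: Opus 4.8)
Part (1): Characterize when $\alpha = \beta_1^{a_1}\cdots\beta_n^{a_n}$ generates. The Galois group $\mathfrak{A}_n$ acts on $\{\beta_1,\dots,\beta_n\}$, hence permutes the exponent vector $(a_1,\dots,a_n)$. The stabilizer of $\alpha$ in $\mathfrak{A}_n$ is the subgroup of even permutations fixing the vector $(a_1,\dots,a_n)$, i.e. the even permutations of $S_n$ that permute only equal-valued indices. So $\alpha$ generates iff this stabilizer is trivial. If there are three indices with a common value, or two disjoint pairs of equal values, we get a $3$-cycle or a product of two transpositions — both even and nontrivial — so $\alpha$ does not generate. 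Conversely, if there is at most one pair of equal values, the only permutation fixing the vector is a single transposition, which is odd, hence the stabilizer in $\mathfrak{A}_n$ is trivial and $\alpha$ generates. (One must also note $\beta$ itself has degree $n$, so the $\beta_i$ are distinct.)

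Part (2): Apply Dobrowolski's bound to $\alpha$. Since $\beta$ is a unit, $\alpha$ is a unit, and $\alpha$ generates a field of degree $d = |\mathfrak{A}_n| = n!/2$. Dobrowolski (in Voutier's form) gives $h(\alpha) \ge \frac{1}{4d}\bigl(\frac{\log\log d}{\log d}\bigr)^3$ provided $h(\alpha)\neq 0$, which holds since $\alpha$ is an algebraic integer that is not a root of unity (as it generates a large field). But this alone is far too weak — it goes to $0$. The key extra input must come from the fact that $\alpha$ has many conjugates with controlled multiplicative relations: the conjugates of $\alpha$ under $\mathfrak{A}_n$ all lie in the rank-$n$ (or rank-$(n-1)$, since $\beta$ is a unit, via the logarithmic embedding) multiplicative group generated by $\beta_1,\dots,\beta_n$. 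The strategy, following Amoroso's approach in \cite{amoroso2018mahler}, is to exploit that $d = n!/2$ conjugates of $\alpha$ live in a group of rank at most $n-1$, so many of them must be "close together" multiplicatively; more precisely, one bounds $h(\alpha)$ from below by relating $\sum_i \max(0,\log|\sigma_i(\alpha)|)$ across all embeddings to a sum over the logarithmic vectors $\sigma(\log|\beta_1|,\dots,\log|\beta_n|)$ for $\sigma\in\mathfrak{A}_n$, and uses a combinatorial/geometric lower bound (a quadratic-form or covering argument) for this sum in terms of $n$. The constants $200\pi$ and the exponent $3$ strongly suggest one combines a Dobrowolski-type estimate on $\beta$ itself (giving $h(\beta) \gg \frac{1}{n}(\frac{\log\log n}{\log n})^3$) with a lower bound of order $\sqrt{n}$ for the ratio $h(\alpha)/h(\beta)$ coming from the many conjugates.

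I would structure it as: (a) express $h(\alpha) = \frac{1}{d}\sum_{\sigma\in\mathfrak{A}_n}\max\bigl(0,\,\sum_i a_i\log|\sigma\beta_i|\bigr)$ using that $\alpha$ is a unit of degree $d$ and the conjugates of $\alpha$ are the $\prod_i (\sigma\beta_i)^{a_i}$; actually, accounting for complex places, $h(\alpha) = \frac{1}{2d}\sum_v \bigl|\sum_i a_i \log|\sigma_v\beta_i|\bigr|$ up to normalization — I would set this up carefully. (b) Show this is $\ge c\sqrt{n}\cdot \max_i |\log|\beta_i||$ or $\ge c\sqrt n\cdot h(\beta)$ by a second-moment / Cauchy–Schwarz argument over the group $\mathfrak{A}_n$ acting on the vector $(a_1,\dots,a_n)$ paired against $(\log|\beta_i|)$, exploiting that the $a_i$ are nearly distinct (from part (1)) so the vector is "generic". (c) Invoke Dobrowolski–Voutier for $\beta$: $h(\beta)\ge \frac14\cdot\frac1n(\frac{\log\log n}{\log n})^3$. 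Combining gives the stated bound with the constant $\frac14\cdot c^2$ matching $\frac{1}{200\pi}$ for the right choice of $c$ (likely $c = \sqrt{2/(25\pi)}$ or similar), with $g(n)\to 0$ absorbing lower-order terms.

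The main obstacle is step (b): getting the sharp $\sqrt{n}$ factor with an explicit constant. The difficulty is that the $a_i$ are arbitrary integers (only "nearly distinct"), so one cannot assume they are bounded; the estimate must be scale-invariant in $(a_i)$ and must handle the interaction between the arithmetic of $(a_i)$ and the archimedean sizes $\log|\beta_i|$. I expect the heart of the argument is a lemma stating that for any nonzero real vector $\mathbf{x} = (\log|\beta_i|)$ with $\sum x_i = 0$ (unit condition) and any integer vector $\mathbf a$ with at most one repeated entry, $\frac{1}{|\mathfrak{A}_n|}\sum_{\sigma}\bigl|\langle \mathbf a, \sigma\mathbf x\rangle\bigr| \ge c\sqrt n \,\|\mathbf x\|_\infty$ (or $\ge c\sqrt n\, \cdot\frac{1}{n}\|\mathbf x\|_1$), proved by computing the variance $\frac{1}{|\mathfrak{A}_n|}\sum_\sigma \langle\mathbf a,\sigma\mathbf x\rangle^2$ exactly via the (doubly-transitive) action and then passing from $L^2$ to $L^1$ by controlling the fourth moment or by a Paley–Zygmund-type inequality — this is where the constant $\pi$ plausibly enters, through an $L^1$-vs-$L^2$ comparison for a near-Gaussian distribution.
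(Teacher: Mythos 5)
In part (1), you assert without proof that the stabilizer of $\alpha$ in $\mathfrak{A}_n$ equals the set of even permutations fixing the exponent vector $(a_1,\dots,a_n)$, i.e.\ permuting only equal-valued indices. That is the crux of the argument and is not automatic: $\sigma(\alpha)=\alpha$ only says $\prod_i \beta_i^{\,a_i-a_{\sigma^{-1}(i)}}=1$, which a priori could hold because of a multiplicative relation among the $\beta_i$ even when the exponents do not all vanish. The paper closes this gap with an adaptation of a lemma of Smyth (Proposition~\ref{prop pour montrer le point (1)}): under the hypothesis $\mathfrak{A}_n\subset\mathrm{Gal}$, if $\beta^k\notin\mathbb{Q}$ for every nonzero integer $k$, then any product $\prod_i\beta_i^{v_i}$ with the $v_i$ not all equal fails to be a root of unity; from this one deduces $a_i-a_{\sigma^{-1}(i)}$ is a constant $c$, and iterating over an orbit of $\sigma$ forces $c=0$. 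Verifying the side condition $\beta^k\notin\mathbb{Q}$ is itself a short but necessary argument using simplicity of $\mathfrak{A}_n$ for $n\geq 5$. Without this multiplicative-independence input, the ``only if'' direction of your characterization is unproved, and the same gap propagates into part (2) since you invoke part (1) there to control the $a_i$'s.

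For part (2), your high-level architecture matches the paper's: write $h(\alpha)$ as an average over $\mathfrak{A}_n$ of $|\langle \mathbf{a},\sigma\mathbf{x}\rangle|$ with $\mathbf{x}=(\log|\beta_i|)_i$, show this average is at least of order $\sqrt{n}\cdot h(\beta)$, then apply Dobrowolski--Voutier to $\beta$. However, you leave the key step (b) as a sketch, and the route you suggest (exact variance computation via double transitivity plus a Paley--Zygmund $L^1$-vs-$L^2$ comparison) is not carried out and is not what the paper does. The paper instead introduces extremal two-valued vectors $z^{(n,h)}$, uses an averaging/convexity argument (Lemma~\ref{formule des classes} and Proposition~\ref{(2.1)}) to reduce the lower bound of $s_n(x,y)/(|x|_1|y|_1)$ to $c_n=\min_{h,k}s_n(z^{(n,h)},z^{(n,k)})$, derives a closed binomial formula for that quantity (Lemma~\ref{Lemma grosse égalité}), and extracts the asymptotic $c_n\sim\sqrt{2/(\pi n)}$ (whence $200\pi=25\pi\cdot 8$ after combining with $|y|_1\geq (n-3)/5$ from Lemma~\ref{Lemma 1}) by a Stirling-type limit (Lemma~\ref{Lemma 1.4}). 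Your $L^2$-to-$L^1$ idea could plausibly be made to work, but it would require a genuine fourth-moment or anti-concentration bound that you have not supplied, and you also do not prove the needed lower bound $|y|_1\gg n$ from the near-distinctness of the $a_i$'s. As written, part (2) is an outline with the correct shape but an unfilled core estimate.
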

Our last main result, a counterpart to \cite[Theorems 1.2]{amoroso2018mahler} for $\mathfrak{A}_{n}$, deals with generators obtained as linear combination of conjugates:
\begin{thm}\label{thm 1.2} Let $\beta$ be an algebraic integer of degree $n \ge 5$, let $\beta_{1}, \ldots, \beta_{n}$ be its conjugates and suppose that $\mathrm{Gal}(\mathbb{Q}(\beta_{1}, \ldots, \beta_{n})/ \mathbb{Q}) = \mathfrak{A}_{n}$.  Consider $\alpha=a_{1} \beta_{1}+\ldots+a_n \beta_n$ where $a_{1},\ldots,a_{n} \in \mathbb{Z}$. Then: 
\begin{enumerate}[(1)]
    \item\label{thm 1.2-it1} $\alpha$ is a generator of $\mathbb{Q}(\beta_{1}, \ldots, \beta_{n})/ \mathbb{Q}$ if and only if there are at most two distinct indices $i,j$ such that $a_{i}=a_{j}$.
    \item\label{thm 1.2-it2} If $\alpha$ is a generator of $\mathbb{Q}(\beta_{1}, \ldots, \beta_{n})/ \mathbb{Q}$, then 
    $$h(\alpha)\ge  {\frac{1}{240}}\log\left(\frac{n}{9} \right).$$ In particular, $h(\alpha)$ tends to infinity with $n$.
\end{enumerate}
\end{thm}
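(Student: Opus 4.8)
I would treat the two parts separately; Part~(1) is a group-theoretic computation, and Part~(2) is where the real work lies. For Part~(1), put $K=\mathbb{Q}(\beta_{1},\dots,\beta_{n})$ and identify $G=\mathrm{Gal}(K/\mathbb{Q})$ with $\mathfrak{A}_{n}\subseteq\mathfrak{S}_{n}$ through its action on the conjugates, so that $\sigma\beta_{i}=\beta_{\bar\sigma(i)}$. Then $\alpha$ generates $K$ over $\mathbb{Q}$ iff its stabilizer in $G$ is trivial, and $\sigma$ fixes $\alpha=\sum_{i}a_{i}\beta_{i}$ iff $\sum_{j}(a_{\bar\sigma^{-1}(j)}-a_{j})\beta_{j}=0$. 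The $\mathbb{Q}$-subspace of linear relations among $\beta_{1},\dots,\beta_{n}$ is a $G$-submodule of the permutation module $\mathbb{Q}^{n}=\mathbf{1}\oplus\mathrm{Std}$; since $\mathrm{Std}$ is irreducible for $n\ge5$ and neither $\mathrm{Std}$ nor $\mathbb{Q}^{n}$ can occur (they would force $\beta_{1}=\dots=\beta_{n}$, impossible as $[\mathbb{Q}(\beta):\mathbb{Q}]=n$), that module is $0$ or $\mathbb{Q}\cdot(1,\dots,1)$. Together with $\sum_{j}(a_{\bar\sigma^{-1}(j)}-a_{j})=0$, this forces $a_{\bar\sigma^{-1}(j)}=a_{j}$ for every $j$, so the stabilizer equals $\bigl(\prod_{k}\mathfrak{S}_{A_{k}}\bigr)\cap\mathfrak{A}_{n}$ with $A_{k}=\{j:a_{j}=k\}$. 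A block $A_{k}$ of size $\ge3$ contributes an even $3$-cycle and two blocks of size $\ge2$ contribute an even double transposition, while a single block of size $2$ contributes only an odd transposition; hence the stabilizer is trivial exactly when at most one block has $\ge2$ elements and that block has exactly two, which is the stated condition.

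For Part~(2) I may assume $n\ge9$, since for $n\le8$ we have $\log(n/9)\le0\le h(\alpha)$. Write $d=|\mathfrak{A}_{n}|=n!/2$; as $\beta$ is an algebraic integer so is $\alpha$, hence for a fixed embedding $K\hookrightarrow\mathbb{C}$ one has $h(\alpha)=\tfrac1d\sum_{\sigma\in G}\log\max(1,|\sigma\alpha|)$, and I write $\log^{+}t=\log\max(1,t)$ below. Because at most two of the $a_{i}$ coincide, they take at least $n-1$ distinct values, so $\max_{i}a_{i}-\min_{i}a_{i}\ge n-2$; fix positions $p,q,r$ with $a_{p}=\min_{i}a_{i}$ and $a_{r}=\max_{i}a_{i}$, set $\gamma=(p\,q\,r)\in\mathfrak{A}_{n}$, and partition $\mathfrak{A}_{n}$ into the $d/3$ cosets of the order-$3$ subgroup $\langle\gamma\rangle$. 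On a coset $\rho\langle\gamma\rangle=\{\rho,\rho\gamma,\rho\gamma^{2}\}$, writing $w=(\beta_{\rho(p)},\beta_{\rho(q)},\beta_{\rho(r)})$ and $u=(a_{p}-a_{r},\,a_{q}-a_{p},\,a_{r}-a_{q})$ (so $u_{1}+u_{2}+u_{3}=0$), a direct computation shows that the three differences $\rho(\alpha)-\rho\gamma(\alpha)$, $\rho\gamma(\alpha)-\rho\gamma^{2}(\alpha)$, $\rho\gamma^{2}(\alpha)-\rho(\alpha)$ are $u^{(0)}\!\cdot w,\ u^{(1)}\!\cdot w,\ u^{(2)}\!\cdot w$, where $u^{(0)}=u$ and $u^{(1)},u^{(2)}$ are its cyclic shifts. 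The elementary identity
$$\sum_{k=0}^{2}\bigl|u^{(k)}\!\cdot w\bigr|^{2}=\frac{|u|^{2}}{2}\sum_{1\le j<l\le3}|w_{j}-w_{l}|^{2}\qquad(\text{valid whenever }u_{1}+u_{2}+u_{3}=0),$$
combined with the fact that among three complex numbers the largest in modulus is at least half the largest pairwise distance, yields $\max_{\tau\in\rho\langle\gamma\rangle}|\tau\alpha|\ge\sqrt{|u|^{2}/24}\,\widetilde{M}_{T(\rho)}$, where $T(\rho)=\{\rho(p),\rho(q),\rho(r)\}$, $\widetilde{M}_{T}=\max_{i<j\in T}|\beta_{i}-\beta_{j}|>0$ by separability of $\beta$, and $|u|^{2}\ge(a_{p}-a_{r})^{2}\ge(n-2)^{2}$.

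Summing $\sum_{\tau\in\rho\langle\gamma\rangle}\log^{+}|\tau\alpha|\ge\log^{+}\!\bigl(\sqrt{|u|^{2}/24}\,\widetilde{M}_{T(\rho)}\bigr)\ge\log\bigl(\sqrt{|u|^{2}/24}\,\widetilde{M}_{T(\rho)}\bigr)$ over all $d/3$ cosets, and using that $\mathfrak{A}_{n}$ is transitive on $3$-subsets so that each $3$-subset $T$ arises as $T(\rho)$ for exactly $(n-3)!$ cosets, one gets
$$h(\alpha)\ \ge\ \frac{1}{3\binom{n}{3}}\sum_{T}\Bigl(\tfrac12\log\tfrac{|u|^{2}}{24}+\log\widetilde{M}_{T}\Bigr)\ \ge\ \frac16\log\frac{(n-2)^{2}}{24}+\frac{1}{3\binom{n}{3}}\sum_{T}\log\widetilde{M}_{T}.$$
Since $\widetilde{M}_{T}\ge\bigl(\prod_{i<j\in T}|\beta_{i}-\beta_{j}|\bigr)^{1/3}$, the last sum is at least $\tfrac13(n-2)\sum_{i<j}\log|\beta_{i}-\beta_{j}|=\tfrac{n-2}{6}\log|\operatorname{disc}(\beta)|\ge0$, because $\operatorname{disc}(\beta)$ is a nonzero rational integer. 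Hence $h(\alpha)\ge\tfrac13\log\bigl((n-2)/(2\sqrt6)\bigr)$, and as $(n-2)/(2\sqrt6)\ge n/9$ for $n\ge5$ this gives $h(\alpha)\ge\tfrac13\log(n/9)\ge\tfrac1{240}\log(n/9)$ for $n\ge9$, completing the proof.

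The genuinely delicate point is the final "integrality'' step: a priori $\beta$ might have very small height, with all its conjugates clustered, which would seem to make every $\sigma\alpha$ small — but the discriminant bound $\prod_{i<j}|\beta_{i}-\beta_{j}|\ge1$ says exactly that the conjugates of $\beta$ cannot all cluster, so that on average over $3$-subsets the spread $\widetilde{M}_{T}$ is not small. Everything else is bookkeeping: the $\langle\gamma\rangle$-coset decomposition of the conjugates of $\alpha$, the averaging identity above, and the combinatorial count matching $3$-subsets to cosets. I should note that the constant is far from optimal — the argument really yields $h(\alpha)\gg\log n$ — so the factor $\tfrac1{240}$ in the statement merely absorbs cruder estimates and the small-$n$ range; a cleaner write-up might therefore choose slightly lossier but more uniform bounds.
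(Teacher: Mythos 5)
Your proof is correct, and for both parts it takes a genuinely different route from the paper. For Part~(1), the paper first proves an additive analogue of Smyth's lemma (that $\sum v_i\beta_i\notin\mathbb{Q}$ unless all $v_i$ coincide, established by repeated application of $3$-cycles) and then runs a direct Galois-theoretic argument; you instead observe that the relations module is an $\mathfrak{A}_n$-submodule of $\mathbb{Q}^n=\mathbf{1}\oplus\mathrm{Std}$, invoke irreducibility of $\mathrm{Std}$, and read off the stabilizer as a Young subgroup intersected with $\mathfrak{A}_n$ --- same content, but your version makes the structure of the stabilizer immediately transparent. For Part~(2) the divergence is substantial. The paper introduces the quantity $\Delta=\prod_{\tau\in T_n}\prod_{\sigma\in A_\tau}|\alpha_{\sigma\tau}-\alpha_\sigma|$, evaluates it as $\bigl(|V(\mathfrak a)|\,|\mathrm{disc}(\beta)|^{1/2}\bigr)^{|\Lambda_{n-2}|}$, and bounds it via $M(\alpha)M(\alpha_\tau)$ together with the key Lemma~\ref{M(tau(alpha)) en fonction de M(alpha)} comparing $M(\alpha_\tau)$ to $M(\alpha)$; this needs derangement-number estimates (Lemma~\ref{bound-Gamma_n}) and, because the Vandermonde factor $V(\mathfrak a)$ vanishes when two $a_i$ coincide, a second case constructing a nearby generator with all-distinct coefficients via Lemma~\ref{(a_i-a_j)(beta_i-beta_j)}. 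Your argument bypasses all of this: you partition $\mathfrak{A}_n$ into cosets of the cyclic group $\langle(p\,q\,r)\rangle$ built from indices realizing $\min a_i$ and $\max a_i$, use the Parseval identity $\sum_k|u^{(k)}\cdot w|^2=\tfrac{|u|^2}{2}\sum_{j<l}|w_j-w_l|^2$ (valid on $u_1+u_2+u_3=0$; I checked it) to transfer the spread $\max a_i-\min a_i\geq n-2$ into the conjugates of $\alpha$, and aggregate over $3$-subsets using the discriminant bound $|\mathrm{disc}(\beta)|\geq1$. This handles the ``two equal $a_i$'' case with no extra work, avoids the transposition-comparison lemma entirely, and yields the stronger bound $h(\alpha)\geq\tfrac13\log\tfrac{n-2}{2\sqrt6}$, which dominates $\tfrac1{240}\log(n/9)$. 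The only place where you should spell out a step in a write-up is the coset count: that $\rho\langle\gamma\rangle\mapsto\rho(\{p,q,r\})$ is well-defined on cosets and each $3$-subset has exactly $(n-3)!$ preimages, which follows from a sign-reversing involution by a transposition disjoint from $\{p,q,r\}$ (using $n\geq5$).
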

We notice that our results potentially support an extension of Conjecture \ref{conj-Amo} to generators of Galois extensions with Galois group containing $\mathfrak{A}_{n}$.

We now describe the structure of the article and briefly outline the main ideas involved in the proofs of our results.

The proof of Theorem \ref{generalresult}, presented in Section \ref{sect-general-res}, relies on elementary group theory and basic properties of the Mahler measure. It is notably simpler than the proof of Theorem \ref{thm 1.1}, which suggests that the most challenging scenario arises when $\beta$ is a unit.

The proofs of Theorems \ref{thm 1.1} and \ref{thm 1.2} are presented in Sections \ref{Générateur de An version multiplicative} and  \ref{Générateur de An version additive}, respectively. 

Our proof strategy for Theorem \ref{thm 1.1} builds upon Amoroso's approach in \cite[Theorem 1.1]{amoroso2018mahler}. 
Point \eqref{thm1.1-it1} is proved in Section  \ref{sec-proof-gen-mult} and  relies on an adaption of a result of Smyth (see Proposition \ref{prop pour montrer le point (1)}) and some elementary Galois theoretic arguments.

Point \eqref{thm1.1-it3} is a corollary of a more precise statement, given in Proposition \ref{thm1.1-it2}, whose proof, done in Section \ref{point2-thm1.1}, is more subtle and requires some technicalities.
First, in Proposition \ref{Mahler-sn}, we link the Mahler measures of $\alpha$ and $\beta$ to the values taken by the  function 
\[ \frac{2}{n!}\sum_{\sigma \in \Agothique_{n}} \left|\frac{1}{n}\sum_{j=1}^{n} \log|\beta_{\sigma{(j)}}| \left(a_{j} - \frac{1}{n}\sum_{i=1}^{n}a_{i}\right) \right|.\]

We then study the properties of this function and provide  an upper and lower bound for it (see Proposition \ref{(2.1)}) in terms of a constant $c_n$ (estimated in Proposition \ref{(2.2)}).
Point \eqref{thm1.1-it3} then follows combining the above results with the explicit version of Dobrowolski's theorem by Voutier \cite[Theorem, p. 83]{voutier}.

As for Theorem \ref{thm 1.2}, the proof of point \eqref{thm 1.2-it1}, carried out in Section \ref{point-1-theorem1.3},
 is the same, in additive notation, as the one of point \eqref{thm1.1-it1} of Theorem \ref{thm 1.1} and it is even simpler than its multiplicative variant. 

It is in the proof of point \eqref{thm 1.2-it2}, done in Section \ref{sec-point(2)-thm1.3}, that lies the main novelty of our strategy. 
The key point here is Lemma \ref{M(tau(alpha)) en fonction de M(alpha)}
 which describes how the Mahler measure of $\alpha$ changes when one \emph{applies} a transposition to $\alpha$, so, by hypothesis, an element not in $\mathrm{Gal}(\Q(\alpha)/\Q)$ and that \emph{should not act on $\alpha$}.  
This is then used to give a lower bound for $M(\alpha)$ (see Proposition \ref{minoration de M(alpha)})
 independent on the chosen transposition, but depending, amongst others, on the quantity
$$V(\mathfrak{a})=\prod_{1\le i<j\le n}(a_{j}-a_{i}).$$
This bound is exploitable only when all $a_i$'s are distinct, and in this case the quantities involved can be bounded using, among other ingredients, estimates on the derangement numbers.

However, $\alpha$ could be a generator of the extension even if two of the indices $a_{i}$'s are equal, causing the quantity $V(\mathfrak{a})$ to be zero and providing no useful information. To overcome this, we construct another generator given by a linear combination of the $\beta_i$'s with all distinct integral coefficients and having height close to that of $\alpha$. This is done using the key Lemma \ref{(a_i-a_j)(beta_i-beta_j)}. The height of such a generator can then be bounded applying the first part of the proof, and this allows to conclude.

Finally, in Section \ref{section-examples}, we present some applications of our results to specific families of polynomials. Additionally, we discuss whether the proof strategy of our main results can be adapted to generators of Galois extensions with Galois groups different from $\mathfrak{A}_n$ or $\mathfrak{S}_n$.

\section{Proof of Theorem \ref{generalresult}}\label{sect-general-res} 
For the proof of Theorem \ref{generalresult}, we will need the following elementary lemma.
\begin{Lemma} \label{lemme élémentaire transitif}
    Let $G < \mathfrak{S}_{n}$ be a transitive subgroup. For $ i,k \in \{1,\dots,n\}$ let $G_{i,k}=\{\sigma \in G\mid \sigma(k)=i \}$. 
Then $|G_{i,k}|=\frac{|G|}{n}.$
\end{Lemma}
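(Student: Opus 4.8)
The plan is to use the orbit–stabilizer theorem together with the transitivity of $G$. First I would fix the index $k$ and consider the stabilizer $G_k = \{\sigma \in G \mid \sigma(k) = k\}$. Since $G$ acts transitively on $\{1,\dots,n\}$, the orbit of $k$ is all of $\{1,\dots,n\}$, so by orbit–stabilizer we get $[G:G_k] = n$, i.e. $|G_k| = |G|/n$.

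Next I would observe that each set $G_{i,k}$ is either empty or a left coset of the stabilizer $G_k$. Indeed, by transitivity there exists at least one $\tau \in G$ with $\tau(k) = i$, so $G_{i,k}$ is nonempty; and for any such $\tau$, one has $\sigma(k) = i$ if and only if $\tau^{-1}\sigma(k) = k$, i.e. $\sigma \in \tau G_k$. Hence $G_{i,k} = \tau G_k$, a left coset, and therefore $|G_{i,k}| = |G_k| = |G|/n$.

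This is really a one-paragraph argument once orbit–stabilizer is invoked, so I do not anticipate any genuine obstacle. The only point to be slightly careful about is the direction of the coset: one should check whether $G_{i,k}$ is a left or right coset of the appropriate stabilizer, but since we only care about cardinality and all cosets of $G_k$ have the same size $|G_k|$, this subtlety does not affect the conclusion. One could equally phrase it by fixing $i$ and using the stabilizer $G^i = \{\sigma \mid \sigma^{-1}(i) = i\}$, which is a conjugate of $G_k$ hence of the same order; either way the count is $|G|/n$.

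Alternatively, if one wishes to avoid quoting orbit–stabilizer directly, one can argue by a double-counting (or partition) argument: for fixed $k$, the sets $G_{1,k}, \dots, G_{n,k}$ partition $G$ (every $\sigma \in G$ sends $k$ to exactly one index, and transitivity ensures each part is nonempty), and the bijection $\sigma \mapsto \tau_{i}^{-1}\sigma$ — where $\tau_i$ is any fixed element with $\tau_i(k) = i$ — shows all parts have equal cardinality; since they sum to $|G|$, each has size $|G|/n$. I would present the orbit–stabilizer version as the cleaner of the two.
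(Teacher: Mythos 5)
Your proof is correct. Your primary argument (orbit--stabilizer gives $|G_k| = |G|/n$, then each $G_{i,k}$ is a left coset $\tau G_k$ of the stabilizer) is a slightly different packaging from the paper's: the paper instead exhibits, for each pair $i,j$, an explicit bijection $\sigma \mapsto \tau\sigma$ between $G_{i,k}$ and $G_{j,k}$, concludes that all $n$ parts $G_{1,k},\dots,G_{n,k}$ have equal size, and divides $|G|$ by $n$. These are essentially two sides of the same coin --- both ultimately rest on the coset decomposition of $G$ by the point stabilizer --- but your version front-loads the standard orbit--stabilizer theorem, while the paper keeps everything self-contained and elementary. Your closing ``alternative'' double-counting argument is in fact exactly the paper's proof, so you have both routes covered; the coset-direction caveat you flag is indeed harmless for the cardinality count.
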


\begin{proof}
    Let $i,j,k \in \{1,\dots,n \}$ be fixed, and let $\sigma \in G$.
    Since $G$ is transitive, we can take $\tau \in G$ such that $\tau(i)=j$.
    We define the function 
    $$\begin{array}{ccccc}
        \phi_{\tau} & : &  G_{i,k}  & \to &  G_{j,k}  \\
        & & \sigma & \mapsto & \tau \sigma \\
    \end{array}.$$
 One can check that $\phi_{\tau}$ is well-defined, and it is a bijection, with inverse given by $\sigma \mapsto \tau^{-1} \sigma$. So we have $|G_{i,k}|=|G_{j,k}|$. Since $$\sum_{\ell=1}^{n} |G_{\ell,k}|=|G|$$ we have the sought-for equality. 
\end{proof}

We can now prove Theorem \ref{generalresult}.
\begin{proof}[Proof of Theorem \ref{generalresult}]
     Let $G$ be the Galois group of $\Q(\beta_{1},\dots,\beta_{n})$ over $\Q$.
   Notice that, by the definition of the Mahler measure, we have
    \begin{equation}\label{bound-M-alpha}M(\alpha) \ge \left |N_{\Q(\alpha)/\Q}(\alpha) \right |.\end{equation}

    Our goal is to show that
    \begin{equation}\label{goal}\left |N_{\Q(\alpha)/\Q}(\alpha) \right |^{1/|G|}=\left |N_{\Q(\beta)/\Q}(\beta) \right |^{\frac{1}{n}\sum_{i=1}^{n}a_{i}}.\end{equation}

    We have
   \begin{equation}\label{bound-norm-1}
        \left |N_{\Q(\alpha)/\Q}(\alpha) \right | = \prod_{\sigma \in G} \prod_{i=1}^{n} \left | \beta_{i}^{a_{\sigma^{-1}(i)}} \right | = \prod_{i=1}^{n} \left | \beta_{i}^{\sum_{\sigma \in G} a_{\sigma^{-1}(i)}} \right |=\prod_{i=1}^{n} \left | \beta_{i}^{\sum_{\sigma \in G} a_{\sigma(i)}} \right |.
\end{equation}

    Since $G$ is a transitive subgroup of $\Sgothique_{n}$,  for all $i \in \{1,\dots,n\}$, we have
    \[\sum_{\sigma \in G}a_{\sigma(i)}=\sum_{\sigma \in G}a_{\sigma(1)}.\]

    Therefore
   \[
        \prod_{i=1}^{n} \left | \beta_{i}^{\sum_{\sigma \in G} a_{\sigma(i)}} \right | = \prod_{i=1}^{n} \left | \beta_{i} \right |^{\sum_{\sigma \in G} a_{\sigma(1)}}  
        = \left | N_{\Q(\beta)/\Q}(\beta) \right |^{\sum_{\sigma \in G} a_{\sigma(1)}}.\]

    Letting $G_{i,1}=\{\sigma \in G\mid \sigma(1)=i \}$, by Lemma \ref{lemme élémentaire transitif} we have 
    \[\sum_{\sigma \in G} a_{\sigma(1)}=\sum_{i=1}^n \left(\sum_{\sigma \in G_{i,1}} a_i\right)=\frac{|G|}{n}\sum_{i=1}^{n} a_{i}.\]

    Hence, from \eqref{bound-norm-1} we obtain
    \begin{equation*}\left |N_{\Q(\alpha)/\Q}(\alpha) \right | = \left | N_{\Q(\beta)/\Q}(\beta) \right |^{\frac{|G|}{n}\sum_{i=1}^{n} a_{i}}.\end{equation*}
    This proves \eqref{goal} by taking the $|G|$-th root.
Finally, from \eqref{bound-M-alpha}, we have
    \[M(\alpha)^{1/|G|} \ge \left |N_{\Q(\beta)/\Q}(\beta) \right| ^{\frac{1}{n} \sum_{i=1}^{n} a_{i}}.\]

    Applying this to $1/\alpha$ and using $M(\alpha)=M(1/\alpha)$, we also have
    \[M(\alpha)^{1/|G|} \ge \left |N_{\Q(\beta)/\Q}(\beta) \right| ^{-\frac{1}{n} \sum_{i=1}^{n} a_{i}}.\]

    This proves Theorem \ref{generalresult} by noticing that $|G|=[\Q(\alpha):\Q]$.

\end{proof}

\section{Proof of Theorem \ref{thm 1.1}} \label{Générateur de An version multiplicative}

The objective of this section is to establish Theorem \ref{thm 1.1}.
To achieve this, as mentioned earlier, we will follow the proof outlined by Amoroso in \cite[Sections 2 and 3]{amoroso2018mahler} and adapt it as needed.

For enhanced readability, the proof of the  distinct points in Theorem \ref{thm 1.1} is divided into  separate subsections. Within each subsection, we introduce the necessary technical results and definitions needed in the proof of the corresponding point.

Throughout the article, if $\beta_1,\ldots,\beta_n\in \overline{\Q}$ is a full set of Galois conjugates, when writing $\Agothique_{n} \subseteq \mathrm{Gal}(\Q(\beta_{1},\dots,\beta_{n})/\Q)$, we identify $\sigma\in \Agothique_{n}$ with the automorphism $\sigma(\beta_i)=\beta_{\sigma(i)}$.

\subsection{The multiplicative case: proof of Theorem \ref{thm 1.1}, point \eqref{thm1.1-it1} }\label{sec-proof-gen-mult}
We start, as in \cite{amoroso2018mahler}, by adapting the multiplicative version of \cite[Lemma 1]{smyth1986additive}:
\begin{prop} \label{prop pour montrer le point (1)}
Let $\beta$ be an algebraic number of degree $n\geq 5$ such that $\beta^{k} \notin \Q$ for any non-zero integer $k$.
Let $\beta_{1},\dots,\beta_{n}$ be the conjugates of $\beta$, and suppose that $\Agothique_{n} \subset \mathrm{Gal}(\Q(\beta_{1},\dots,\beta_{n})/\Q)$. Then, for every $v_{1},\dots,v_{n} \in \Z$, not all equal, the product $\beta_{1}^{v_{1}} \dots \beta_{n}^{v_{n}}$ {is not a root of unity}.
\end{prop}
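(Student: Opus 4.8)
The plan is to argue by contradiction: suppose $\gamma = \beta_{1}^{v_{1}} \cdots \beta_{n}^{v_{n}}$ is a root of unity, where $v_{1},\dots,v_{n} \in \Z$ are not all equal. Since the $v_i$ are not all equal, there exist two indices, say (after relabeling) $1$ and $2$, with $v_{1} \neq v_{2}$. The idea is to apply well-chosen elements of $\Agothique_{n}$ to $\gamma$ and combine the resulting relations multiplicatively to produce an equation of the form $\beta_{i}^{k} \in \Q$ for some nonzero integer $k$, contradicting the hypothesis that $\beta^{k} \notin \Q$ for all $k \neq 0$ (equivalently $\beta_{i}^{k} \notin \Q$, since the $\beta_i$ are conjugate).

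\textbf{Key steps.} First, I would record that if $\gamma$ is a root of unity, then so is $\sigma(\gamma)$ for every $\sigma \in \Agothique_{n}$, and that the quotient of two roots of unity is again a root of unity; hence for any $\sigma, \tau \in \Agothique_{n}$ the element $\sigma(\gamma)/\tau(\gamma)$ is a root of unity. Concretely, $\sigma(\gamma)/\tau(\gamma) = \prod_{i=1}^{n} \beta_{i}^{\,v_{\sigma^{-1}(i)} - v_{\tau^{-1}(i)}}$. The heart of the argument is a clever choice of $\sigma$ and $\tau$ — or rather a short product of such quotients — so that all but one of the exponents cancel. Since $n \geq 5$, the group $\Agothique_{n}$ is rich enough: it is generated by $3$-cycles, and $3$-cycles can be used to permute the exponent vector $(v_1,\dots,v_n)$ with enough flexibility that, starting from the nontrivial difference $v_1 - v_2$, one can isolate a single coordinate. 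A concrete approach: pick a $3$-cycle that moves position $1$ to position $2$; then $\gamma \cdot \sigma(\gamma)^{-1}$ has exponent vector $v - \sigma(v)$ with controlled support of size at most $3$, and iterating with a few more $3$-cycles (legitimately, since $n \ge 5$ leaves room to act on disjoint or overlapping triples) lets one engineer a relation supported on a single index, i.e. $\beta_{i}^{k}$ a root of unity with $k \neq 0$. Taking a further power to kill the root of unity gives $\beta_{i}^{m} \in \Q$ for some $m \neq 0$ (using that a root of unity times $\beta_i^m$ lying in $\overline{\Q}$ and being a root of unity forces, after raising to the order of that unit, $\beta_i^{mN} \in \Q$), contradicting the hypothesis.

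\textbf{Main obstacle.} The delicate point is ensuring that the combinatorial manipulation of exponent vectors by even permutations really can isolate a single coordinate while keeping the corresponding integer exponent nonzero — one must track that the cancellations do not accidentally also annihilate the coordinate one wants to keep. This is where the hypothesis $n \ge 5$ (so that $\Agothique_n$ is simple and $2$-transitive, with enough disjoint $3$-cycles available) is essential, and it is presumably handled in \cite{smyth1986additive} in additive notation; the work here is to transcribe that argument into the multiplicative setting and to verify the edge cases where the support of $v$ is small. I expect the bookkeeping — rather than any conceptual difficulty — to be the bulk of the proof, with the passage from ``$\beta_i^k$ is a root of unity'' to ``$\beta_i^k \in \Q$ for some $k \neq 0$'' being a short routine step.
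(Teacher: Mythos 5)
Your plan is in the right spirit and conceptually matches the paper's argument — apply well-chosen $3$-cycles to $\gamma$ and combine the resulting relations so that almost all exponents cancel, then contradict $\beta^{k}\notin\Q$. But the proposal stops at exactly the point where the real content lies: you never exhibit the concrete sequence of permutations or verify that the cancellation isolates one coordinate with a nonzero exponent. You say as much yourself in the ``main obstacle'' paragraph, so what you have is a strategy sketch, not a proof; without that bookkeeping the argument is incomplete.

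For comparison, the paper first raises everything to a power so that one may assume $\beta_{1}^{v_{1}}\cdots\beta_{n}^{v_{n}}=1$ exactly (a small but useful normalization you do not make — you instead carry ``is a root of unity'' through every step, which works but is messier). Then, with $v_{1}\neq v_{2}$, applying $(1\;3\;2)$ to the relation and dividing gives $\beta_{1}^{v_{1}-v_{2}}\beta_{2}^{v_{2}-v_{3}}\beta_{3}^{v_{3}-v_{1}}=1$; applying $(1\;i\;j)$ with $i,j\geq 4$ and dividing kills the $\beta_{2},\beta_{3}$ factors and yields $\beta_{1}^{v_{1}-v_{2}}=\beta_{i}^{v_{1}-v_{2}}$ for all $i\geq 4$; two more $3$-cycles $(1\;2\;k)$, $(1\;3\;\ell)$ extend this to every index. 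The punch line is also different from yours: rather than deducing that some $\beta_{i}^{k}$ is a root of unity, the paper observes that $\beta^{v_{1}-v_{2}}$ is now fixed by the entire Galois group and hence lies in $\Q$, directly contradicting the hypothesis. Your proposed endpoint ($\beta_{i}^{k}$ a root of unity, then raise to its order) is also a valid way to reach a contradiction, but it requires the same explicit cancellation you have not carried out, and it adds an extra step that the Galois-fixedness argument avoids.
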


\begin{proof}
Assume, by contradiction, that $\beta_{1}^{v_{1}} \dots \beta_{n}^{v_{n}}$ is a root of unity. Up to multiplying all the $v_i$'s by the same non-zero integer,  we can suppose that $\beta_{1}^{v_{1}} \dots \beta_{n}^{v_{n}}=1$. 

 Notice that by hypothesis, $\mathrm{Gal}(\Q(\beta_1,\ldots,\beta_n)/\Q)$ contains all 3-cycles. Without loss of generality, we can suppose  that $\beta=\beta_1$ and $v_{1} \neq v_{2}$.

Applying the cycle $(1 \; 3 \; 2)$ to both sides of the equality $\beta_{1}^{v_{1}} \dots \beta_{n}^{v_{n}}=1$, we obtain $\beta_1^{v_1-v_2}\beta_2^{v_2-v_3}\beta_3^{v_3-v_1}=1$.
Applying to this last equality the cycle $(1 \; i \; j)$, for $i\neq j$ and $i,j\geq 4$, we obtain $\beta_1^{v_1-v_2}=\beta_i^{v_1-v_2}$ for all $i\geq 4$. 

Applying further the cycles $(1 \; 2 \; k)$ and $(1 \; 3 \; \ell)$ for $k,\ell\not\in\{2,3,i\}$, we finally get that for all $1\leq i\leq n$, $\beta_{1}^{v_{1}-v_{2}}=\beta_{i}^{v_{1}-v_{2}}$. So, $\beta^{v_{1}-v_{2}} \in \Q$ as it is fixed by all elements of $\mathrm{Gal}(\Q(\beta_{1},\dots,\beta_{n})/\Q)$, contradicting  the hypothesis on $\beta$ as $v_1\neq v_2$.
\end{proof}

Now we are able to prove point $(1)$ of Theorem \ref{thm 1.1}.

\begin{proof}[Proof of Point $\eqref{thm1.1-it1}$ of Theorem \ref{thm 1.1}]
Assume first that $\alpha$ is a generator of $\Q(\beta_1,\ldots,\beta_n)/\Q$. 
If $|\{a_{i}\mid 1 \le i \le n \}| < n-1$, then either there exist indices $i<j<k$ such that $a_{i}=a_{j}=a_{k}$, or there are two couples $(i,j)\neq (k,\ell)$ with $i<j$ and $k<\ell$ such that $a_{i}=a_{j}$ and $a_{k}=a_{\ell}$.

In the first case, we have that $\alpha$ is fixed by the 3-cycle $(i \; j \; k)$, while in the second case $\alpha$ is fixed by the double transposition $(i \; j)(k \; \ell)$. 
In all cases, we find $[\Q(\alpha): \Q]<|\Agothique_{n}|$, contradicting the fact that  $\alpha$ is a generator of $\Q(\beta_1,\ldots,\beta_n)/\Q$.

For the converse, assuming that $|\{a_{i}, 1 \le i \le n \}| \geq n-1$, we want to show that the group $\mathrm{Gal}(\Q(\beta_{1},...,\beta_{n})/ \Q(\alpha))$ is trivial.

Let $\sigma\in \mathrm{Gal}(\Q(\beta_{1},...,\beta_{n})/ \Q(\alpha))$ and let $\tau=\sigma^{-1}$. We want to show that $\tau$ is the identity.
Regarding $\sigma$ as an element of $\Agothique_{n}$, the equality $\sigma(\alpha)=\alpha$ gives  \begin{equation}\label{eq-alpha-sigma(alpha)}\prod_{k=1}^{n}\beta_{k}^{a_{k}-a_{\tau(k)}}=1.\end{equation}

Notice now that $\beta$ satisfies the hypothesis of Proposition \ref{prop pour montrer le point (1)}. Indeed, suppose that $\beta^{k} \in \Q$ for some integer $k>0$. Then, for all $i,j$, we have $\beta_{i}^{k} = \beta_{j}^{k}$. So, for all $i,j$, there exists a $k$-th root of unity $\zeta_{i,j,k}$ such that $\beta_{i} = \zeta_{i,j,k}\beta_{j}$. We have for all $i,j$ 
\begin{equation} \label{argument galois correspondence}
    \zeta_{i,j,k} = \frac{\beta_{i}}{\beta_{j}} \in \mathbb{Q}(\beta_{1},\ldots,\beta_{n}). 
\end{equation}
Hence, we get for all $i,j$ 
$$\mathbb{Q}(\zeta_{i,j,k}) \subset \mathbb{Q}(\beta_{1},\ldots,\beta_{n}).$$Since $\mathbb{Q}(\zeta_{i,j,k})/\mathbb{Q}$ is Galois, then $\text{Gal}(\mathbb{Q}(\beta_{1},\ldots,\beta_{n})/\mathbb{Q}(\zeta_{i,j,k}))$ is a normal subgroup of $\text{Gal}(\mathbb{Q}(\beta_{1},\ldots,\beta_{n})/\mathbb{Q})=\mathfrak{A}_{n}$, which is simple for $n\geq 5$. Hence, either $\mathbb{Q}(\beta_{1},\ldots,\beta_{n})=\mathbb{Q}(\zeta_{i,j,k})$, but this cannot be the case as $\mathfrak{A}_{n}$ is not abelian for $n\geq 4$, or $\Q=\mathbb{Q}(\zeta_{i,j,k})$. 

If $\mathbb{Q}(\zeta_{i,j,k}) = \mathbb{Q}$ for all $i,j$, since $\zeta_{i,j,k}$ is a root of unity, we have either $\zeta_{i,j,k}=1$ or $\zeta_{i,j,k}=-1$. If $\zeta_{i,j,k}=1$ for some $i \neq j$, by \eqref{argument galois correspondence}, we would have $\beta_{i}=\beta_{j}$, which is a contradiction. If $\zeta_{i,j,k}=-1$ for all $i \neq j$, then we have by \eqref{argument galois correspondence}, $\beta_{1}=-\beta_{2}=-\beta_{3}$, and so $\beta_{2}=\beta_{3}$, which is again a contradiction. Finally, for all non-zero integer $k$, we have $\beta^{k} \notin \mathbb{Q}$.

Therefore, applying the conclusion of Proposition \ref{prop pour montrer le point (1)} to \eqref{eq-alpha-sigma(alpha)}, there exists an integer $c$ such that $a_{k}-a_{\tau(k)}=c$ for all $k$. 

If $\ell$ if the order of $\tau$, we have, $ a_{1}=a_{\tau^{\ell}(1)}=\dots=a_{1}+c\ell$. So $c=0$ and $a_{\tau(k)}=a_{k}$ for all $k$. 

If the $a_{i}$ are all distinct, then $\tau$ is necessarily the identity. 

If $|\{a_{i}, 1 \le i \le n \}|= n-1$, we can suppose that $a_{1}=a_{2}$. We then have, $ \tau(k)=k$ for all $k\geq 3$ and $\tau(1),\tau(2) \in \{1,2\}$. 

But $\tau \in \Agothique_{n}$, so $\tau$ cannot be the transposition $(1 \; 2)$. Hence $\tau$ is the identity, concluding the proof.
\end{proof}

\begin{rmq}
    We notice that the argument from the previous proof allows us to remove the condition $\beta$ unit, also for \cite[Theorem 1.1 point (1)]{amoroso2018mahler}.
\end{rmq}

\subsection{Proof of Theorem \ref{thm 1.1}, point \eqref{thm1.1-it3}}\label{point2-thm1.1}
The goal of this section is to prove the following result, of which Theorem \ref{thm 1.1}, point \eqref{thm1.1-it3} is a simple corollary.
\begin{prop}\label{thm1.1-it2}
Let $\beta$ be an algebraic unit of degree $n \ge 5$, let $\beta_{1}, \ldots, \beta_{n}$ be its conjugates and assume that $\mathrm{Gal}(\mathbb{Q}(\beta_{1}, \ldots, \beta_{n})/\mathbb{Q}) = \mathfrak{A}_{n}$. Let $\alpha= \beta_{1}^{a_{1}} \ldots \beta_{n}^{a_{n}}$ where $a_{1}, \ldots, a_{n} \in \mathbb{Z}$. 

Let $y_{j}= a_{j} - \frac{1}{n}\sum_{i=1}^{n}a_{i}$ and set $|y|_1=\frac{1}{n}\sum_{j=1}^n |y_j|$. Then
$$M(\beta)^{|y|_{1}} \ge M(\alpha)^{{2}/{n!}} \ge M(\beta)^{c_n |y|_{1}},$$
where $c_{n}$ is an effective computable constant satisfying
\[\lim_{n\rightarrow \infty}{c_{n}} \sqrt{\frac{\pi n}{2}}=1.\]
\end{prop}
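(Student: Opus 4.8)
The plan is to relate the Mahler measures of $\alpha$ and $\beta$ through an explicit averaging over the Galois group $\Agothique_n$, then to extract the stated sandwich bound from a careful analysis of the resulting sum. First I would fix notation: set $b_j = \log|\beta_j|$ and $y_j = a_j - \frac1n\sum_i a_i$, so that $\sum_j y_j = 0$. For a permutation $\sigma \in \Agothique_n$, the conjugate $\sigma(\alpha) = \prod_k \beta_k^{a_{\sigma^{-1}(k)}}$ has $\log|\sigma(\alpha)| = \sum_k a_{\sigma^{-1}(k)} b_k = \sum_j a_j b_{\sigma(j)}$, and since $\sum_j b_{\sigma(j)} = \sum_j b_j = \log|N_{\Q(\beta)/\Q}(\beta)| = 0$ (here $\beta$ is a unit), we may replace $a_j$ by $y_j$ without changing the value: $\log|\sigma(\alpha)| = \sum_j y_j\, b_{\sigma(j)}$. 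Because $\Q(\alpha) = \Q(\beta_1,\dots,\beta_n)$ has degree $n!/2$ and $\alpha$ is an algebraic integer (a product of units), $M(\alpha) = \prod_{\sigma \in \Agothique_n} \max(1, |\sigma(\alpha)|)$, so
$$\frac{2}{n!}\log M(\alpha) = \frac{2}{n!}\sum_{\sigma \in \Agothique_n} \max\Bigl(0, \sum_{j=1}^n y_j\, b_{\sigma(j)}\Bigr).$$
Since $\sigma$ and a fixed odd permutation times $\sigma$ are not both in $\Agothique_n$, one cannot directly symmetrize; instead I would use that for any real $t$, $\max(0,t) + \max(0,-t) = |t|$, together with the fact that $\alpha^{-1}$ is also a generator with exponent vector $-a_j$, to get the cleaner identity $\frac{2}{n!}\log(M(\alpha)M(\alpha^{-1})) = \frac{2}{n!}\sum_{\sigma\in\Agothique_n}|\sum_j y_j b_{\sigma(j)}|$, and then observe $M(\alpha) = M(\alpha^{-1})$. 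This yields $\log M(\alpha)^{2/n!} = \frac{1}{n!}\sum_{\sigma\in\Agothique_n}\bigl|\sum_j y_j b_{\sigma(j)}\bigr|$ — essentially the function displayed in the introduction — which is presumably the content of Proposition \ref{Mahler-sn}, and I would invoke it.

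Next I would prove the two inequalities. For the upper bound $M(\alpha)^{2/n!} \le M(\beta)^{|y|_1}$: by the triangle inequality $\bigl|\sum_j y_j b_{\sigma(j)}\bigr| \le \sum_j |y_j|\,|b_{\sigma(j)}|$, and averaging over $\Agothique_n$ each $|b_{\sigma(j)}|$ contributes, by Lemma \ref{lemme élémentaire transitif} applied to $\Agothique_n$ (which is transitive), the value $\frac1n\sum_k |b_k| = \frac1n \log M(\beta)$ (again using that $\beta$ is a unit, so $\frac1n\sum_k|b_k| = \frac1n\sum_k \max(0,b_k) + \frac1n\sum_k\max(0,-b_k)$ and $M(\beta)=M(\beta^{-1})$, giving $\frac1n\sum|b_k| = \frac2n\log M(\beta)$ — I'd be careful with this factor of $2$ and reconcile it against the stated exponent). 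Carrying the constants through gives $\frac{1}{n!}\sum_\sigma|\sum_j y_j b_{\sigma(j)}| \le |y|_1 \log M(\beta)$, which is the right-hand inequality. For the lower bound, the point is that the average of $|\sum_j y_j b_{\sigma(j)}|$ cannot be too small compared to $\frac1n\sum_j|y_j|\cdot\frac1n\sum_k|b_k|$; this is where a constant $c_n$ with $c_n\sqrt{\pi n/2}\to 1$ enters. I expect the strategy is to reduce, by the transitivity/symmetry of $\Agothique_n$, to a model computation: the quantity $\frac{1}{n!}\sum_{\sigma}\bigl|\sum_j y_j b_{\sigma(j)}\bigr|$ is, up to the $\Agothique_n$-versus-$\Sgothique_n$ discrepancy, an average over random permutations of $|\langle y, \pi(b)\rangle|$, and for a "generic" direction this behaves like a Gaussian with standard deviation $\asymp \frac{1}{\sqrt n}\|y\|_2\|b\|_2$; the worst case (smallest value, giving $c_n$) occurs at the extreme configurations of $y$ and $b$, and the constant $c_n$ is defined precisely as the infimum of the ratio, to be estimated in Proposition \ref{(2.2)}. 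I would phrase the lower bound as: there is an effective $c_n>0$ such that for all unit $\beta$ and all integer vectors $a$, $\frac{1}{n!}\sum_\sigma|\sum_j y_j b_{\sigma(j)}| \ge c_n |y|_1 \log M(\beta)$, and cite Propositions \ref{(2.1)} and \ref{(2.2)} for its existence and asymptotics.

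The main obstacle, and the technical heart, is the lower bound and the precise asymptotic $c_n\sqrt{\pi n/2}\to 1$. The difficulty is twofold: first, the sum runs over $\Agothique_n$ rather than $\Sgothique_n$, so the clean symmetrization arguments (replacing $\sigma$ by $\sigma\cdot\tau$ for a transposition $\tau$) are unavailable and must be replaced by arguments valid for the alternating group — this is exactly the adaptation of Amoroso's $\Sgothique_n$ argument that the introduction advertises, and I'd handle it by comparing $\Agothique_n$-averages with $\Sgothique_n$-averages and controlling the error, or by working with $3$-cycles which generate $\Agothique_n$. Second, extracting the sharp constant requires identifying the extremal $(y,b)$ and doing an asymptotic analysis — likely $y$ concentrated on one coordinate and $b$ likewise, reducing to estimating $\frac{1}{n!}\sum_\sigma |b_{\sigma(1)}|$-type quantities and higher moments, where the $\sqrt{\pi n/2}$ comes from $\mathbb{E}|Z| = \sqrt{2/\pi}\,\sigma$ for a Gaussian, i.e. from a central limit theorem for linear statistics of random permutations. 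I would isolate all of this into the cited Propositions \ref{(2.1)} and \ref{(2.2)} and, in the present proof, simply assemble: apply Proposition \ref{Mahler-sn} to rewrite $M(\alpha)^{2/n!}$, then apply the two bounds from Propositions \ref{(2.1)}–\ref{(2.2)}, obtaining $M(\beta)^{|y|_1} \ge M(\alpha)^{2/n!} \ge M(\beta)^{c_n|y|_1}$ with $c_n$ effective and $c_n\sqrt{\pi n/2}\to 1$, which is exactly the claim. Finally I would note that if $y = 0$ the statement is trivial (all $a_i$ equal, $\alpha \in \Q$, not a generator for $n \ge 5$), so we may assume $|y|_1 > 0$ throughout.
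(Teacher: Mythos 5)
Your proposal is correct and follows essentially the same route as the paper: you rewrite $\log M(\alpha)^{2/n!}$ as $\frac{1}{n!}\sum_{\sigma\in\Agothique_n}\bigl|\sum_j y_j\log|\beta_{\sigma(j)}|\bigr|$ (the content of Proposition~\ref{Mahler-sn}, with the factor of $2$ from $\sum_k|\log|\beta_k||=2\log M(\beta)$ correctly accounted for), then sandwich this via Proposition~\ref{(2.1)} and feed in the asymptotics of $c_n$ from Proposition~\ref{(2.2)}. The only cosmetic difference is that you recover the absolute-value identity by comparing $\alpha$ with $\alpha^{-1}$, whereas the paper derives it directly from $\sum_j|\log|\gamma_j||=2\log M(\gamma)$ for a unit $\gamma$; these are the same calculation.
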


Before proceeding with the proof of this part, we 
need to recall some definitions from \cite[Section 2]{amoroso2018mahler} and adapt some other technical results therein.

Let \( H_{n} = \{ x \in \mathbb{R}^{n} \mid x_{1} + \dots + x_{n} = 0 \} \), and for \( x \in H_{n} \), we define the usual \( L^{1} \)-norm on \( \mathbb{R}^{n} \) as
$|x|_{1} = \frac{1}{n} \sum_{j=1}^{n} |x_{j}|.$

The group $\Agothique_{n}$ acts on $H_{n}$ by $\sigma(x)=\left(x_{\sigma(1)}, \dots, x_{\sigma(n)} \right)$.
For $x,y \in H_{n}$, we set
\begin{equation*}
s_{n}(x,y)=\frac{2}{n!}\sum_{\sigma \in \Agothique_{n}} \left|\frac{1}{n}\sum_{j=1}^{n}x_{\sigma(j)}y_{j} \right|.
\end{equation*}

We notice that $s_{n}$ is symmetric.

The proof of Proposition \ref{thm1.1-it2} is then a combination of three results. The first, given in the following proposition, links the Mahler measure of $\alpha$ to the function $s_n$.
\begin{prop}\label{Mahler-sn} Let $\alpha,\beta_1,\ldots,\beta_n$ and $y=(y_1,\ldots,y_n)$ be as in the statement of Proposition \ref{thm1.1-it2}. Let $x=(x_1,\ldots,x_n)$ where $x_i=\log|\beta_{i}|$.  
Then 
\[\log(M(\beta))=\frac{n}{2} |x|_1\]
and 
\[\log(M(\alpha))=\frac{ n (n!)}{4}  s_{n}(x,y) .\]
\end{prop}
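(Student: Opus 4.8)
\textbf{Proof plan for Proposition \ref{Mahler-sn}.}

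The first identity is immediate from the definitions. Since $\beta$ is an algebraic unit of degree $n$, its minimal polynomial over $\Z$ has leading coefficient $\pm 1$, so $\log M(\beta)=\sum_{i=1}^{n}\max(0,\log|\beta_i|)=\sum_{i=1}^{n}x_i^{+}$, where $x_i^{+}=\max(0,x_i)$. Because $\beta$ is a unit, $\prod_i|\beta_i|=1$, i.e. $\sum_i x_i=0$, so $x\in H_n$; hence $\sum_i x_i^{+}=\tfrac12\sum_i|x_i|$ and therefore $\log M(\beta)=\tfrac12\sum_i|x_i|=\tfrac{n}{2}|x|_1$. This is the easy half and I would dispatch it in two lines.

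For the second identity, the plan is to compute $M(\alpha)$ by writing out all the conjugates of $\alpha$. The key point is that $\alpha=\prod_k \beta_k^{a_k}$ is a unit (a product of units), so again $\log M(\alpha)=\tfrac12\sum_{\text{conj.}}\bigl|\log|\cdot|\bigr|$, summed over the conjugates of $\alpha$ \emph{counted with multiplicity matching the degree}. Since $\mathrm{Gal}(\Q(\beta_1,\dots,\beta_n)/\Q)=\Agothique_n$ acts on $\alpha$ via $\sigma(\alpha)=\prod_k\beta_{\sigma(k)}^{a_k}=\prod_k\beta_k^{a_{\sigma^{-1}(k)}}$, and the stabilizer of $\alpha$ in $\Agothique_n$ has index $d=[\Q(\alpha):\Q]$, the multiset $\{\sigma(\alpha):\sigma\in\Agothique_n\}$ consists of each of the $d$ conjugates of $\alpha$ repeated $n!/(2d)$ times. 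Hence
\[
\log M(\alpha)=\frac{1}{n!/(2d)}\cdot\frac{d\cdot(n!/(2d))}{?}\cdots
\]
— more cleanly: $\displaystyle\sum_{\sigma\in\Agothique_n}\bigl|\log|\sigma(\alpha)|\bigr| = \frac{n!/2}{d}\sum_{i=1}^{d}\bigl|\log|\alpha_i|\bigr| = \frac{n!/2}{d}\cdot 2\log M(\alpha)=\frac{n!}{d}\log M(\alpha)$, using that $\alpha$ is a unit. So $\log M(\alpha)=\dfrac{d}{n!}\sum_{\sigma\in\Agothique_n}\bigl|\log|\sigma(\alpha)|\bigr|$; but in fact the cleanest route avoids $d$ altogether by averaging: $\dfrac{2}{n!}\sum_{\sigma\in\Agothique_n}\bigl|\log|\sigma(\alpha)|\bigr|=\dfrac{2}{n!}\cdot\dfrac{n!}{2d}\cdot 2\log M(\alpha)=\dfrac{2}{d}\log M(\alpha)$. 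I'll instead simply use that the average of $\bigl|\log|\sigma(\alpha)|\bigr|$ over $\sigma\in\Agothique_n$ equals the average over the $d$ true conjugates, which is $\tfrac{2}{d}\log M(\alpha)$, i.e.
\[
\log M(\alpha)=\frac{d}{2}\cdot\frac{2}{n!}\sum_{\sigma\in\Agothique_n}\bigl|\log|\sigma(\alpha)|\bigr|.
\]
Now $\log|\sigma(\alpha)|=\sum_{k=1}^n a_k\log|\beta_{\sigma(k)}|=\sum_{k=1}^n a_k x_{\sigma(k)}$. I would like to replace $a_k$ by $y_k=a_k-\tfrac1n\sum_i a_i$: since $x\in H_n$, $\sum_k x_{\sigma(k)}=0$, so $\sum_k a_k x_{\sigma(k)}=\sum_k y_k x_{\sigma(k)}$. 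Therefore
\[
\frac{2}{n!}\sum_{\sigma\in\Agothique_n}\Bigl|\sum_{k=1}^n y_k x_{\sigma(k)}\Bigr|
=\frac{2}{n!}\sum_{\sigma\in\Agothique_n}\Bigl|\frac1n\sum_{k=1}^n y_k x_{\sigma(k)}\Bigr|\cdot n
=n\,s_n(x,y),
\]
by definition of $s_n$ (with the roles of the two slots matching after relabeling). Combining, $\log M(\alpha)=\tfrac{d}{2}\cdot n\,s_n(x,y)=\tfrac{nd}{2}s_n(x,y)$, which would give the claimed formula \emph{only} if $d=n!/2$, i.e. only when $\alpha$ generates.

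\textbf{The main obstacle} is precisely this discrepancy: the stated formula $\log M(\alpha)=\tfrac{n(n!)}{4}s_n(x,y)$ has the constant $n(n!)/4=\tfrac{n}{2}\cdot\tfrac{n!}{2}$, which is $\tfrac{nd}{2}$ exactly when $d=|\Agothique_n|/2\cdot 2/\,\cdots$ — wait, $\tfrac{n!}{2}$ is $|\Agothique_n|$, so the formula asserts $d=|\Agothique_n|$, which is false in general ($d=[\Q(\alpha):\Q]$ divides $|\Agothique_n|$). The resolution is that the definition of $s_n(x,y)$ sums over \emph{all} of $\Agothique_n$, not over conjugates, so the factor $n!/2$ is already built into $s_n$; going back, $s_n(x,y)=\tfrac{2}{n!}\sum_{\sigma}\bigl|\tfrac1n\sum_k y_k x_{\sigma(k)}\bigr|$ and $\sum_\sigma\bigl|\sum_k y_k x_{\sigma(k)}\bigr|=\tfrac{n!}{2d}\sum_{i=1}^d\bigl|\log|\alpha_i|\bigr|=\tfrac{n!}{2d}\cdot 2\log M(\alpha)=\tfrac{n!}{d}\log M(\alpha)$, so $s_n(x,y)=\tfrac{2}{n!}\cdot\tfrac1n\cdot\tfrac{n!}{d}\log M(\alpha)=\tfrac{2}{nd}\log M(\alpha)$, giving $\log M(\alpha)=\tfrac{nd}{2}s_n(x,y)$ — still with $d$, not $n!/2$. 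I therefore expect that the intended statement implicitly uses that under the hypothesis $\mathrm{Gal}=\Agothique_n$ combined with the setup of Proposition \ref{thm1.1-it2}, or that the formula is to be read as an identity of the form $\log M(\alpha)\le \tfrac{n(n!)}{4}s_n$ with equality iff $\alpha$ is a generator; alternatively, and most likely, the paper's convention is that $M(\alpha)$ here denotes $\prod_{\sigma\in\Agothique_n}\max(1,|\sigma(\alpha)|)$ (the "Mahler measure relative to the Galois closure") rather than the absolute one. I would pin down which convention is in force from the surrounding text, state it explicitly, and then the computation above goes through verbatim with $n!/2$ in place of $d$, yielding exactly $\log M(\alpha)=\tfrac{n}{2}\cdot\tfrac{n!}{2}\,s_n(x,y)=\tfrac{n(n!)}{4}s_n(x,y)$. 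The rest is the bookkeeping already sketched: unit property $\Rightarrow$ the sum of $\log^{+}$ equals half the sum of $|\log|\cdot||$; $x\in H_n\Rightarrow$ one may pass from $a_k$ to $y_k$; and the definition of $s_n$ absorbs the remaining constants.
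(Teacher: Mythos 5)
Your computation is correct and follows the same route as the paper, and your central observation pinpoints a genuine imprecision in the statement. The paper's proof writes $2\log M(\alpha)=\sum_{\sigma\in\Agothique_n}\bigl|\log|\sigma(\alpha)|\bigr|$ on the grounds that ``the conjugates of $\alpha$ are $\beta_{\sigma(1)}^{a_1}\cdots\beta_{\sigma(n)}^{a_n}$ for $\sigma\in\Agothique_n$'' --- but this parametrizes the conjugates without repetition only when $\alpha$ generates $\Q(\beta_1,\ldots,\beta_n)$. In general, exactly as you compute, $\prod_{\sigma\in\Agothique_n}(X-\sigma(\alpha))$ is the $\tfrac{n!}{2d}$-th power of the minimal polynomial of $\alpha$, where $d=[\Q(\alpha):\Q]$, so $\sum_{\sigma\in\Agothique_n}\bigl|\log|\sigma(\alpha)|\bigr|=\tfrac{n!}{d}\log M(\alpha)$ and the true identity is $\log M(\alpha)=\tfrac{nd}{2}\,s_n(x,y)$, which matches the stated constant $\tfrac{n(n!)}{4}$ precisely when $d=n!/2$. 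One can confirm the stated formula is genuinely false otherwise: taking $a_1=2$, $a_2=\cdots=a_n=1$ gives $\alpha=\pm\beta_1$ (since $\beta$ is a unit), so $\log M(\alpha)=\log M(\beta)$, while the right-hand side evaluates to $\tfrac{(n-1)!}{2}\log M(\beta)$.

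This causes no harm downstream, since Proposition \ref{thm1.1-it2} is only invoked in the proof of point \eqref{thm1.1-it3} of Theorem \ref{thm 1.1}, after point \eqref{thm1.1-it1} has already certified that $\alpha$ is a generator; but the cleanest repair --- rather than the alternative convention for $M(\alpha)$ you tentatively float at the end --- is simply to add the hypothesis $\Q(\alpha)=\Q(\beta_1,\ldots,\beta_n)$ to Propositions \ref{Mahler-sn} and \ref{thm1.1-it2}. Apart from this, your bookkeeping (the unit property giving $\sum_i\log^+|\alpha_i|=\tfrac12\sum_i|\log|\alpha_i||$, and the passage from $a_k$ to $y_k$ via $\sum_k x_{\sigma(k)}=0$) matches the paper's exactly. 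Do tighten the write-up: the false-start display and the mid-paragraph recomputation should be excised in favor of the single clean chain $\sum_{\sigma\in\Agothique_n}\bigl|\log|\sigma(\alpha)|\bigr|=\tfrac{n!}{d}\log M(\alpha)$ followed by the substitution $a_k\mapsto y_k$ and the definition of $s_n$.
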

\begin{proof}
Notice that
\[
\sum_{j=1}^{n}x_j=\log(N_{K/\Q}(\beta))=0
\]
so $x \in H_{n}$. Let $\mathcal{I}=\{j\in\{1,\ldots,n\}\mid |\beta_j|\geq 1\}$. Then 
\begin{align*}
    \sum_{j=1}^{n}| x_j|&= \sum_{j\in \mathcal{I}} \log|\beta_{j}|-\sum_{j\not\in \mathcal{I}} \log|\beta_{j}|= 2\sum_{j\in \mathcal{I}} \log|\beta_{j}|\\
    &=2\sum_{j=1}^{n}\log(\max(|\beta_{j}|,1))
    =2\log(M(\beta))
\end{align*}
proving the first equality in the statement.
By applying this equality to $\alpha$, whose conjugates are $\beta_{\sigma(1)}^{a_{1}}\dots \beta_{\sigma(n)}^{a_{n}}$ for $\sigma \in \Agothique_{n}$, we obtain 
\begin{align*}
    2 \log(M(\alpha))&= \sum_{\sigma \in \Agothique_{n}} |\log(|\beta_{\sigma(1)}^{a_{1}}\dots \beta_{\sigma(n)}^{a_{n}}|)|= \sum_{\sigma \in \Agothique_{n}} \left | \sum_{j=1}^{n}a_{j}x_{\sigma(j)} \right | . 
\end{align*}
Thus, by recalling that $a_{j}=y_{j}+\frac{1}{n} \sum_{i=1}^{n}a_{i}$ and that $\sum_{j=1}^n x_{\sigma(j)}=0$ for every $\sigma\in \Agothique_{n}$, we obtain
\begin{align*}
   2\log(M(\alpha))&
    =\sum_{\sigma \in \Agothique_{n}} \left | \sum_{j=1}^{n}y_{j}x_{\sigma(j)}+ \left( \frac{1}{n}\sum_{i=1}^{n}a_{i} \right) \left( \sum_{j=1}^{n}x_{\sigma(j)} \right) \right |\\
    &=\sum_{\sigma \in \Agothique_{n}} \left | \sum_{j=1}^{n}y_{j}x_{\sigma(j)} \right |
    =s_{n}(x,y)\cdot \frac{n (n!)}{2}.
\end{align*}
\end{proof}

The next step in the proof is to provide an upper and lower bound for the quantity $s_n(x,y)$. To this aim, for $h=1, \dots, n-1$, we define the vector $z^{(n,h)} \in H_{n}$ by
\[
z_j^{(n,h)} = \left\{
    \begin{array}{ll}
        \frac{n}{2h} & \mbox{if } j\leq h \\
        -\frac{n}{2(n-h)} & \mbox{if } j>h
    \end{array}
\right.
\]
and we set \[c_{n}=\underset{0<h,k<n}{\min}s_{n}(z^{(n,h)},z^{(n,k)}).\] 

We have the following lemma:

\begin{Lemma} \label{formule des classes}
    Let $n \ge 5$ and $A=\{1,\ldots,h\}$ for some $1 \le h \le n$. Let $G$ be the subgroup of $\Agothique_{n}$ defined as $G = \{\sigma \in \Agothique_{n} \mid \sigma(A) = A\}$. Then, for every $x \in H_{n}$, we have
    \begin{equation*}
        |G|^{-1}\sum_{\sigma \in G} \sigma(x) = z^{(n, h)}.
    \end{equation*}
\end{Lemma}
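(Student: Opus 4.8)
The plan is to prove the identity by a direct symmetry/averaging argument. Write $A=\{1,\ldots,h\}$ and $B=\{h+1,\ldots,n\}$, and let $G=\{\sigma\in\Agothique_n\mid\sigma(A)=A\}$. First I would observe that the vector $w:=|G|^{-1}\sum_{\sigma\in G}\sigma(x)$ lies in $H_n$ (it is an average of elements of $H_n$), and that its coordinates are constant on each of the two blocks $A$ and $B$: indeed, for $i,i'\in A$ there is a transposition-free element of $G$ — or, if $h=2$ or $n-h=2$ one has to be slightly careful, but a suitable double transposition or $3$-cycle inside $A$ (resp.\ inside $B$) works since $h\geq 3$ or $n-h\geq 3$ can always be arranged by symmetry, and the small cases $h\in\{1,2,n-1,n-2\}$ are handled directly — that sends $i\mapsto i'$ while fixing $A$ setwise; averaging over $G$ then shows $w_i=w_{i'}$. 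So $w$ has the shape $(u,\ldots,u,v,\ldots,v)$ with $h$ copies of $u$ and $n-h$ copies of $v$.

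Next I would pin down $u$ and $v$. Summing coordinates, $hu+(n-h)v=0$. To get a second equation, compute $\sum_{i\in A}w_i$ directly:
\[
\sum_{i\in A}w_i=|G|^{-1}\sum_{\sigma\in G}\sum_{i\in A}x_{\sigma(i)}=|G|^{-1}\sum_{\sigma\in G}\sum_{j\in A}x_j=\sum_{j\in A}x_j,
\]
using that $\sigma$ permutes $A$ for each $\sigma\in G$. Hence $hu=\sum_{j\in A}x_j$, i.e. $u=\tfrac1h\sum_{j\le h}x_j$, and then $v=-\tfrac{1}{n-h}\sum_{j\le h}x_j$ from the linear relation. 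This does not yet match $z^{(n,h)}$, whose entries are $\tfrac{n}{2h}$ and $-\tfrac{n}{2(n-h)}$; the match requires $\sum_{j\le h}x_j=n/2$, which is \emph{not} true for general $x\in H_n$. So I would revisit: the statement as written must be using a normalization where one replaces $x$ by a specific vector — re-examining, in the intended application $x$ is itself of the form $z^{(n,k)}$ (that is how $c_n$ is defined), and for $x=z^{(n,k)}$ one has $\sum_{j\le h}x_j\in\{\,\tfrac{nh}{2k}\ (h\le k),\ \tfrac n2+\text{something}\,\}$... Since the lemma is stated for all $x\in H_n$, the honest reading is that the right-hand side should be interpreted with the convention in \cite[Section 2]{amoroso2018mahler}; I would therefore present the computation above, which yields $|G|^{-1}\sum_{\sigma\in G}\sigma(x)$ equal to the vector with blocks $\bigl(\tfrac1h\sum_{j\le h}x_j,\ -\tfrac1{n-h}\sum_{j\le h}x_j\bigr)$, and note that this is exactly $z^{(n,h)}$ precisely under the normalization $\sum_{j\le h}x_j=\tfrac n2$ carried over from Amoroso's setup, which holds in every instance where the lemma is invoked.

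The two genuine steps, then, are: (1) the block-constancy of the average, which is where the hypothesis $n\ge 5$ (ensuring $\Agothique_n$ acts $2$-transitively on each block large enough, and that the relevant stabilizers contain enough even permutations) is used — this is the main obstacle, as one must check the edge cases $h=1,2,n-2,n-1$ separately, where "acting transitively within a block by even permutations" needs a companion transposition in the other block; and (2) the evaluation $\sum_{i\in A}w_i=\sum_{j\in A}x_j$, which is immediate. I expect step (1) to be the only place requiring care: for $2\le h\le n-2$ with, say, $h\ge 2$ and $n-h\ge 2$, given $i,i'\in A$ pick $j\ne j'$ in $B$ and use $(i\ i')(j\ j')\in G\cap\Agothique_n$; for $h=1$ there is nothing to prove on $A$, and one uses $3$-cycles inside $B$ (possible as $n-1\ge 4$); symmetrically for $h=n-1$. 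Once block-constancy is established, the rest is the short linear algebra above, and the lemma follows.
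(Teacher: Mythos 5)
Your argument is essentially the same as the paper's: both establish that the average is block-constant on $A$ and its complement (the paper via a coordinate-by-coordinate count using Lemma~\ref{lemme élémentaire transitif}; you via exhibiting even permutations inside $G$ carrying $i$ to $i'$ within a block, with the small-block cases treated separately by double transpositions across blocks or $3$-cycles in the larger block), and both then determine the block constants from a sum.

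Your observation about the statement itself is correct and worth recording. For a general $x\in H_n$ the average $|G|^{-1}\sum_{\sigma\in G}\sigma(x)$ is the block vector $(u,\dots,u,v,\dots,v)$ with $u=\frac1h\sum_{j\le h}x_j$ and $v=-\frac1{n-h}\sum_{j\le h}x_j$, which equals $z^{(n,h)}$ precisely when $\sum_{j\le h}x_j=\frac n2$. The paper's own proof imposes this silently by opening with $1=|x|_1=\frac1n\left(\sum_{j\le h}x_j-\sum_{j>h}x_j\right)$, which requires both $|x|_1=1$ and that $x_j\ge0$ exactly when $j\le h$. Since Proposition~\ref{(2.1)} only invokes the lemma after normalizing $|x|_1=1$ and reordering so that $A$ is the set of indices with $x_j\ge0$, the lemma is correct in context, but the hypothesis ``for every $x\in H_n$'' is overstated; you are right to flag this, and your proposal makes the needed normalization explicit.
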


\begin{proof}
Let $x\in H_{n}$.
Using the fact that $x_{1} + \dots + x_{n} = 0$, we have
    \begin{align*}
        1 = |x|_{1} &= \frac{1}{n}\sum_{j=1}^{n}|x_{j}| = \frac{1}{n} \left( \sum_{j=1}^{h}x_{j} - \sum_{j=h+1}^{n}x_{j} \right) = \frac{2}{n} \sum_{j=1}^{h}x_{j} = -\frac{2}{n}\sum_{j=h+1}^{n}x_{j}.
    \end{align*}
    So, in particular
    \begin{equation}\label{sum-x_j}
    \sum_{j=1}^{h}x_{j} = \frac{n}{2}= -\sum_{j=h+1}^{n}x_{j} .
    \end{equation}
We also notice that $G$ restricted to $A$ is a transitive subgroup of $\mathfrak{S}(A) \simeq \mathfrak{S}_{h}$. Indeed, if $h=1$ there is nothing to show. If $h=2$, then $A=\{1,2\}$, and $\tau := (1 \; 2)(3 \; 4) \in G$ verifies $\tau(1)=2$. If $h \geq 3$, let us take $i,j,k \in A$, all distinct. Then $\tau := (i \; j \; k) \in G$ verifies $\tau(i)=j$. We set for $j,i \in A$, $G_{j,i} = \{\sigma \in G \mid \sigma(i)=j \}$. We denote the $i$-th component of the vector $\sum_{\sigma \in G}\sigma(x)$ by 
$$\left(\sum_{\sigma \in G } \sigma(x)\right)_{i}.$$ 
So, by Lemma \ref{lemme élémentaire transitif} and \eqref{sum-x_j}, we get for $1\leq i\leq h$
\[
\left(\sum_{\sigma \in G } \sigma(x)\right)_{i} = \sum_{\sigma \in G}x_{\sigma(i)}= \sum_{j=1}^{h} |G_{j,i}|x_{j} = \frac{|G|}{h} \sum_{j=1}^{h}x_{j}  = \frac{n}{2h}|G|.
\]
A similar argument shows that, for $h+1 \leq i \leq n$, we have
\[
\left(\sum_{\sigma \in G } \sigma(x)\right)_{i} = \frac{|G|}{n-h} \sum_{j=h+1}^{n}x_{j}  = -\frac{n}{2(n-h)}|G|,
\]
completing the proof of the lemma.
\end{proof}

From this lemma, we have the following proposition:
\begin{prop}\label{(2.1)}
For every $x,y\in H_n$ we have 
\[c_{n} \le \frac{s_{n}(x,y)}{|x|_{1}|y|_{1}} \le 1.\]
\end{prop}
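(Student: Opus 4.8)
### Proof strategy for Proposition \ref{(2.1)}

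The plan is to prove the two inequalities by a convexity-cum-averaging argument, exploiting the fact that $s_n$ is built from an average over $\mathfrak{A}_n$ of an absolute value of a linear form, hence is convex in each variable, and that the extreme vectors $z^{(n,h)}$ are precisely the $\mathfrak{A}_n$-orbit averages produced by Lemma \ref{formule des classes}.

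For the \emph{upper bound} $s_n(x,y)\le |x|_1|y|_1$, I would first reduce to $|x|_1=|y|_1=1$ by homogeneity: $s_n(\lambda x,\mu y)=|\lambda\mu|\,s_n(x,y)$, so it suffices to show $s_n(x,y)\le 1$ whenever $x,y\in H_n$ have $L^1$-norm $1$. Then I would estimate directly, using the triangle inequality inside the sum and then Cauchy--Schwarz-free bookkeeping:
\[
s_n(x,y)=\frac{2}{n!}\sum_{\sigma\in\mathfrak{A}_n}\Bigl|\frac1n\sum_j x_{\sigma(j)}y_j\Bigr|\le \frac{2}{n!}\sum_{\sigma\in\mathfrak{A}_n}\frac1n\sum_j |x_{\sigma(j)}|\,|y_j|.
\]
Swapping the order of summation and using Lemma \ref{lemme élémentaire transitif} for the transitive group $\mathfrak{A}_n$ (so that $\sum_{\sigma\in\mathfrak{A}_n}|x_{\sigma(j)}|=\frac{n!}{n}\sum_k|x_k|$ for each fixed $j$) collapses the right-hand side to $\frac{2}{n!}\cdot\frac1n\cdot\frac{n!}{n}\bigl(\sum_k|x_k|\bigr)\bigl(\sum_j|y_j|\bigr)=\frac{2}{n}|x|_1|y|_1\cdot\frac{n}{2}\cdot\frac{1}{?}$; I would carefully track the normalization constants (recalling $|x|_1=\frac1n\sum|x_j|$) to land exactly on $|x|_1|y|_1$. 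This part is routine.

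For the \emph{lower bound} $s_n(x,y)\ge c_n|x|_1|y|_1$, the key is to show that for any $x\in H_n$ with $|x|_1=1$, the function $y\mapsto s_n(x,y)$ is minimized (over $y\in H_n$ with $|y|_1=1$) at one of the vectors $z^{(n,k)}$, and symmetrically in $x$; combined with the definition of $c_n$ as the minimum of $s_n(z^{(n,h)},z^{(n,k)})$, this gives the bound. Concretely: fix $x$. After reordering coordinates (which only permutes the $\sigma$'s and leaves $s_n$ and $|x|_1$ unchanged, since $\mathfrak{A}_n$ is normal in $\mathfrak{S}_n$ — here one should be slightly careful, but conjugating $\mathfrak{A}_n$ by any transposition gives $\mathfrak{A}_n$ back, so $s_n$ is in fact $\mathfrak{S}_n$-invariant in the appropriate sense) one may assume $y_1\ge y_2\ge\cdots\ge y_n$. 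Since $y\in H_n$ and $|y|_1=1$, there is an index $h$ with $y_1,\dots,y_h\ge 0 > y_{h+1},\dots,y_n$ and $\sum_{j\le h}y_j=\frac n2=-\sum_{j>h}y_j$; thus $y$ lies in the convex hull of the vectors obtained by averaging over the stabilizer $G=\{\sigma\in\mathfrak{A}_n:\sigma(\{1,\dots,h\})=\{1,\dots,h\}\}$, whose barycenter is exactly $z^{(n,h)}$ by Lemma \ref{formule des classes}. More precisely, I would argue that $y$ can be written as an average $\frac{1}{|G|}\sum_{\tau\in G}\tau(w)$ for a suitable $w$ with the same sign pattern — or, cleaner, that $s_n(x,\cdot)$ being convex and $G$-invariant (after the reduction), Jensen gives $s_n(x,z^{(n,h)})=s_n\bigl(x,\frac1{|G|}\sum_\tau\tau(y)\bigr)\le \frac1{|G|}\sum_\tau s_n(x,\tau(y))=s_n(x,y)$, using $G$-invariance of $s_n(x,\cdot)$ up to the $x$-side. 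Then repeat the argument on the $x$-slot with $y=z^{(n,h)}$ fixed to push $x$ to some $z^{(n,k)}$, yielding $s_n(x,y)\ge s_n(z^{(n,k)},z^{(n,h)})\ge c_n=c_n|x|_1|y|_1$.

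The main obstacle I anticipate is getting the $G$-invariance argument exactly right: $s_n(x,y)$ is defined as an average over $\mathfrak{A}_n$, not over all of $\mathfrak{S}_n$, so when I symmetrize the $y$-variable over the stabilizer $G\le\mathfrak{A}_n$ I must check that $s_n(x,\tau y)=s_n(x,y)$ for $\tau\in G$ — this requires $\sigma\mapsto\sigma\tau$ to be a bijection of $\mathfrak{A}_n$, which holds since $\tau\in\mathfrak{A}_n$, so it is fine; but the reduction "assume $y$ is decreasing" uses a transposition $\rho$, and one needs $s_n(x,\rho y)=s_n(\rho x, y)$ together with $s_n(\rho x,\rho y)=s_n(x,y)$ (the latter because $\rho\mathfrak{A}_n\rho^{-1}=\mathfrak{A}_n$). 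Handling these symmetry bookkeeping steps carefully, and making sure the convexity/Jensen step only ever averages over cosets inside $\mathfrak{A}_n$, is where the care is needed; the analytic content is otherwise light, and the constant $c_n\ge$-part is immediate once the vectors are driven to the $z^{(n,h)}$'s.
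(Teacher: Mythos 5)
Your proposal is correct and follows essentially the same route as the paper: homogeneity to normalize $|x|_1=|y|_1=1$, the transitivity lemma for the upper bound, and convexity/Jensen together with Lemma \ref{formule des classes} to push each slot onto a $z^{(n,h)}$ via the stabilizer $G=\mathrm{Stab}_{\mathfrak{A}_n}(\{1,\dots,h\})$, then invoking the symmetry of $s_n$ for the second slot. The one place you worry unnecessarily is the reordering step: the paper only needs to move the set of non-negative indices onto $\{1,\dots,h\}$, which (since $n\ge 5$ and $1\le h\le n-1$, so at least one of $h$ or $n-h$ is $\ge 2$) can always be achieved by an \emph{even} permutation, sidestepping the $\mathfrak{S}_n$-invariance bookkeeping you discuss; your full sort is harmless but not needed, and your resolution of the parity issue is nonetheless correct.
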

\begin{proof}
We first notice that the upper bound is easy to obtain. We have
\begin{equation}\label{up-sn}
    s_n(x,y) \le \frac{2}{n!} \frac{1}{n}\sum_{\sigma \in \Agothique_{n}}\sum_{j=1}^{n}|x_{\sigma(j)}| |y_{j}| =\frac{2}{n!} \frac{1}{n} \sum_{j=1}^{n}|y_{j}|\sum_{\sigma \in \Agothique_{n}}|x_{\sigma(j)}|.
\end{equation}
By Lemma \ref{lemme élémentaire transitif}, since $\Agothique_{n}$ is a transitive subgroup of $\Sgothique_{n}$, we know that for all $1\le i,j \le n$ we have $$| \{\sigma \in \Agothique_{n} \; |\; \sigma(j)=i \}|=\frac{|\Agothique_{n}|}{n}=\frac{(n-1)!}{2}.$$ 
Therefore, replacing in \eqref{up-sn} we have
\begin{align*}
   \frac{2}{n!} \frac{1}{n} \sum_{j=1}^{n}|y_{j}| \sum_{\sigma \in \Agothique_{n}}|x_{\sigma(j)}| &= \frac{2}{n!} \frac{1}{n} \sum_{j=1}^{n}|y_{j}| \frac{(n-1)!}{2}\sum_{i=1}^{n}|x_{i}|\\
   &= \frac{1}{n^2}\sum_{j=1}^{n}|y_{j}| \sum_{i=1}^{n}|x_{i}| =|x|_{1}|y|_{1}
\end{align*}
concluding the proof of the upper bound.

We are now left with the proof of the lower bound. Let $x$ and $y$ be two non-zero vectors in  $H_{n}$ and let $h$ and $k$ be the number of non-negative components of  $x$ and $y$, respectively.
The proposition reduces to
    \begin{equation}\label{up-low-sn}
    \frac{s_{n}(x, y)}{|x|_{1}|y|_{1}} \ge \frac{s_{n}(z^{(n, h)}, y)}{|y|_{1}} \ge s_{n}(z^{(n, h)}, z^{(n, k)}).
    \end{equation}

Let $x=(x_1,\ldots,x_n)$ and $y=(y_1,\ldots,y_n)$.
Since for real numbers $c,c'>0$ we have $s_{n}(cx, c'y)=cc' s_{n}(x,y)$, we can suppose that 
$|x|_{1} = |y|_{1}=1$. 
Moreover, letting \[A = \{j \in \{1, \dots, n\}\mid x_{j} \ge 0\},\] since the function $\varphi_{n,y}(x)=s_{n}(x,y)$ is invariant under the action of $\Agothique_{n}$, we can further suppose that $A = \{1, \dots, h\}$. Let $G$ be the subgroup of $\Agothique_{n}$  defined as $G = \{\sigma \in \Agothique_{n}\mid  \sigma(A) = A\}$.

    As in \cite{amoroso2018mahler}, the proof now concludes with a convexity argument. As the function $\varphi_{n,y}(x)=s_{n}(x, y)$ is convex and recalling that $|x|_1=1$, we have by Lemma \ref{formule des classes}
    \[
    s_{n}(z^{(n, h)}, y) = s_{n} \left(  \frac{1}{|G|} \sum_{\sigma \in G} \sigma(x) , y \right) \leq \sum_{\sigma \in G} \frac{1}{|G|} s_{n}(\sigma(x), y) = \frac{s_{n}(x, y)}{|x|_1}.
    \]
    From this, using the symmetry of $s_{n}$ by inverting the roles of $x$ and $y$, and recalling that $|y|_1=|x|_{1}=1$, we get
    \[
    \frac{s_{n}(z^{(n, h)}, y)}{|y|_{1}} = \frac{s_{n}(y,z^{(n, h)})}{|y|_{1}} \geq s_{n}(z^{(n, k)}, z^{(n, h)})=s_{n}(z^{(n, h)}, z^{(n, k)})
    \]
which concludes the proof of \eqref{up-low-sn} and of Proposition \ref{(2.1)}.
\end{proof}

The next and last step to prove Proposition $\ref{thm1.1-it2}$ is to provide an asymptotic estimate for the quantity $c_n$. This is done in the following proposition.
\begin{prop} \label{(2.2)}
\[\lim_{n\rightarrow \infty}{c_{n}} \sqrt{\frac{\pi n}{2}}=1.\]
\end{prop}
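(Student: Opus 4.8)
The quantity $c_n$ is a minimum over the finitely many pairs $(h,k)$ with $0<h,k<n$ of the numbers $s_n(z^{(n,h)},z^{(n,k)})$. The plan is to obtain a closed-form (or near-closed-form) expression for $s_n(z^{(n,h)},z^{(n,k)})$, then analyze which pair $(h,k)$ realizes the minimum, and finally extract the asymptotics. First I would write out $s_n(z^{(n,h)},z^{(n,k)})$ explicitly: by definition
\[
s_n(z^{(n,h)},z^{(n,k)})=\frac{2}{n!}\sum_{\sigma\in\Agothique_n}\left|\frac{1}{n}\sum_{j=1}^n z^{(n,h)}_{\sigma(j)}z^{(n,k)}_j\right|.
\]
Since $z^{(n,k)}_j$ takes only the two values $\frac{n}{2k}$ (for $j\le k$) and $-\frac{n}{2(n-k)}$ (for $j>k$), the inner sum $\frac1n\sum_j z^{(n,h)}_{\sigma(j)}z^{(n,k)}_j$ depends on $\sigma$ only through the integer $m=\#\{j\le k: \sigma(j)\le h\}$, the number of the first $k$ indices that $\sigma$ sends into $\{1,\dots,h\}$. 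A short computation using $\sum_{j\le k}z^{(n,h)}_{\sigma(j)}+\sum_{j>k}z^{(n,h)}_{\sigma(j)}=0$ should collapse the inner sum to an affine function of $m$, of the shape $\frac{n}{2}\bigl(\frac{m}{hk}\cdot(\text{something})-\frac{1}{\text{something}}\bigr)$; in fact one expects
\[
\frac{1}{n}\sum_{j=1}^n z^{(n,h)}_{\sigma(j)}z^{(n,k)}_j=\frac{n}{2}\left(\frac{m}{hk}+\frac{k-m}{(n-h)k}-\frac{1}{k}\right)\cdot(\text{const})
\]
up to bookkeeping, which is the $L^1$-analogue of a covariance and vanishes at the ``expected'' value $m_0=hk/n$.

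\textbf{Reduction to a hypergeometric $L^1$-moment.} The number of $\sigma\in\Agothique_n$ with $\#\{j\le k:\sigma(j)\le h\}=m$ is, up to the factor $\frac12$ coming from passing from $\mathfrak{S}_n$ to $\Agothique_n$ (which affects all $\sigma$ uniformly and so drops out of the normalized average), proportional to $\binom{h}{m}\binom{n-h}{k-m}k!\,(n-k)!$, i.e. $m$ follows a hypergeometric distribution $\mathrm{Hyp}(n,h,k)$. Hence
\[
s_n(z^{(n,h)},z^{(n,k)})=C(n,h,k)\cdot\mathbb{E}\left|m-\tfrac{hk}{n}\right|
\]
for an explicit constant $C(n,h,k)$, where $m\sim\mathrm{Hyp}(n,h,k)$. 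So the whole problem becomes: minimize over $h,k$ an explicit multiple of the mean absolute deviation of a hypergeometric variable, and find its asymptotic order. I expect the minimum to be attained (or asymptotically attained) at the ``balanced'' choice $h=k=\lfloor n/2\rfloor$, by symmetry and because the normalizing constant $C(n,h,k)$ is smallest there; this should be checked by showing $s_n$ is, up to lower-order terms, monotone as $h,k$ move toward $n/2$, or by a direct comparison argument.

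\textbf{Asymptotics.} For $h=k\sim n/2$, the hypergeometric $\mathrm{Hyp}(n,n/2,n/2)$ has variance $\sim n/16$ and, being asymptotically normal (local CLT for hypergeometric, or a direct Stirling estimate on the binomial coefficients), its mean absolute deviation satisfies $\mathbb{E}|m-\mathbb{E}m|\sim\sqrt{2/\pi}\cdot\sqrt{n}/4$. Combined with the explicit constant $C(n,n/2,n/2)$ — which a direct substitution of $z^{(n,n/2)}_j=\pm1$ into the formula above should give as a clean power of $n$ — this yields $c_n\sim c\,n^{-1/2}$ with the precise constant making $c_n\sqrt{\pi n/2}\to1$. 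The routine-but-delicate parts are: (i) verifying that the balanced pair really minimizes, for which a convexity or rearrangement argument in $h,k$ is needed and is the main obstacle; and (ii) the uniformity in the CLT/Stirling estimate so that the limit is exactly $1$ rather than merely bounded above and below. For (i) one can lean on the convexity of $x\mapsto s_n(x,y)$ already used in Proposition \ref{(2.1)}, reducing general $(h,k)$ to the balanced case the same way Proposition \ref{(2.1)} reduced general $x$ to the $z^{(n,h)}$; for (ii) the Stirling expansion of $\binom{n}{n/2+t}$ gives a Gaussian approximation with controllable error, and summing $|t|$ against it produces the $\sqrt{2/\pi}$ constant with an explicit error term that vanishes as $n\to\infty$.
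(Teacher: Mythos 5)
Your reduction of $s_n(z^{(n,h)},z^{(n,k)})$ to a mean absolute deviation of a hypergeometric variable, and the Stirling/CLT estimate that follows, is exactly the content of what the paper imports as Lemma~\ref{Lemma grosse égalité} (the explicit formula) and Lemma~\ref{Lemma 1.4} (the asymptotic $\lim 2\sqrt{n}\,s_n(z^{(n,h)},z^{(n,k)})=1/\sqrt{2\pi uv(1-u)(1-v)}$, cited from Amoroso's paper). Your numerics for the balanced case $h=k\sim n/2$ are right and produce the constant $1$ as claimed. So the engine of your argument is sound and coincides with the paper's.

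The gap is in step (i), which you flag as ``the main obstacle.'' You set out to prove that the minimum of $s_n(z^{(n,h)},z^{(n,k)})$ over $0<h,k<n$ is attained (asymptotically) at the balanced pair, and you propose to do this via ``the convexity of $x\mapsto s_n(x,y)$ already used in Proposition~\ref{(2.1)}.'' That convexity argument doesn't apply here: in Proposition~\ref{(2.1)} convexity is used to average a general $x$ over a stabilizer subgroup and land on a $z^{(n,h)}$, but it gives no comparison between different $z^{(n,h)}$'s, so it cannot push $(h,k)$ toward $(n/2,n/2)$. More importantly, you don't need to locate the minimizer at all. The paper's proof sidesteps it entirely by a compactness argument: take the actual minimizing indices $(h_m,k_m)$; along any subsequence where $h_m/m\to u$ and $k_m/m\to v$ (such subsequences exist by compactness of $[0,1]^2$), Lemma~\ref{Lemma 1.4} gives $2\sqrt{m}\,c_m\to 1/\sqrt{2\pi uv(1-u)(1-v)}\ge 1/\sqrt{2\pi/16}$, hence $\liminf c_m\sqrt{\pi m/2}\ge 1$; and the matching $\limsup\le 1$ follows by testing $h_m=k_m=\lfloor m/2\rfloor$. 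So the ``hard'' minimization step evaporates once you trade exact location of the minimizer for a $\liminf$/$\limsup$ squeeze. On point (ii), your worry about uniformity in the CLT is legitimate in principle, but it is also absorbed by the same subsequence trick: one only ever needs the pointwise limit of Lemma~\ref{Lemma 1.4} along convergent subsequences, not a uniform error term.
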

The proof of Proposition \ref{(2.2)} relies on the following two lemmas. The first gives a closed formula for the quantity $s_{n}(z^{(n,h)},z^{(n,k)})$ which equals the one in \cite[Lemma 2.3]{amoroso2018mahler}, even though, we notice that the definition of the function $s_n$ therein is slightly different from ours.
  \begin{Lemma}\label{Lemma grosse égalité}
    Let $y \in H_{n}$ and $h,k \in \{1,\dots,n-1\}$. Then 
    \begin{equation}\label{form-sn-1}
    s_{n}(z^{(n,h)},y) = \frac{n}{2h(n-h)} \binom{n}{h}^{-1} \sum_{\substack{ S \subset \{1,\dots,n\}\\|S|=h }} \left | \sum_{j \in S} y_{j} \right|
    \end{equation}
    and 
    \begin{equation}\label{form-sn-2}
    s_{n}(z^{(n,h)},z^{(n,k)}) = \frac{n^{2}(h-\lfloor \frac{hk}{n} \rfloor )(k- \lfloor \frac{hk}{n} \rfloor)}{2hk(n-h)(n-k)} \binom{n}{h}^{-1} \binom{k}{ \lfloor  \frac{hk}{n} \rfloor} \binom{n-k}{h- \lfloor  \frac{hk}{n} \rfloor}
    \end{equation}
    where for a real number $x$, $\lfloor x \rfloor$ denotes as usual the greatest integer less than or equal to $x$.
\end{Lemma}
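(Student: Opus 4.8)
The plan is to prove Lemma \ref{Lemma grosse égalité} by a direct combinatorial computation, starting from the definition of $s_n$ and exploiting the very rigid structure of the vectors $z^{(n,h)}$ and $z^{(n,k)}$.

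\medskip

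\textbf{Step 1: reduction of $s_n(z^{(n,h)},y)$ to a sum over $h$-subsets.} First I would unwind the definition
\[
s_{n}(z^{(n,h)},y)=\frac{2}{n!}\sum_{\sigma\in\Agothique_{n}}\Bigl|\frac{1}{n}\sum_{j=1}^{n}z^{(n,h)}_{\sigma(j)}y_{j}\Bigr|.
\]
For a fixed $\sigma$, the value $\sum_{j}z^{(n,h)}_{\sigma(j)}y_{j}$ depends only on the set $S=\sigma^{-1}(\{1,\dots,h\})$, i.e.\ on which $h$ indices $j$ get the weight $\frac{n}{2h}$ and which $n-h$ get $-\frac{n}{2(n-h)}$. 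Using $\sum_j y_j=0$, one rewrites
\[
\frac{1}{n}\sum_{j}z^{(n,h)}_{\sigma(j)}y_{j}=\frac{1}{n}\Bigl(\frac{n}{2h}\sum_{j\in S}y_j-\frac{n}{2(n-h)}\sum_{j\notin S}y_j\Bigr)=\frac{1}{2h}\sum_{j\in S}y_j+\frac{1}{2(n-h)}\sum_{j\in S}y_j=\frac{n}{2h(n-h)}\sum_{j\in S}y_j.
\]
Now I count: for each $h$-subset $S$, the number of $\sigma\in\Agothique_{n}$ with $\sigma^{-1}(\{1,\dots,h\})=S$ is $\frac{n!/2}{\binom{n}{h}}=\frac{h!(n-h)!}{2}$ (the alternating group acts, and this count is independent of $S$ since any two such preimage conditions differ by composition with a fixed permutation, one of the two cosets of which lies in $\Agothique_n$ — valid for $1\le h\le n-1$, $n\ge 5$, where $\mathfrak S_n/\Agothique_n$ is visibly nontrivial on these fibers). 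Substituting gives exactly \eqref{form-sn-1}.

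\medskip

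\textbf{Step 2: specializing $y=z^{(n,k)}$ and counting intersections.} Next I apply \eqref{form-sn-1} with $y=z^{(n,k)}$. For an $h$-subset $S$, put $t=|S\cap\{1,\dots,k\}|$; then
\[
\sum_{j\in S}z^{(n,k)}_j=t\cdot\frac{n}{2k}+(h-t)\cdot\Bigl(-\frac{n}{2(n-k)}\Bigr)=\frac{n}{2k(n-k)}\bigl(tn-hk\bigr),
\]
so $\bigl|\sum_{j\in S}z^{(n,k)}_j\bigr|=\frac{n}{2k(n-k)}|tn-hk|$. The number of $h$-subsets $S$ with $|S\cap\{1,\dots,k\}|=t$ is $\binom{k}{t}\binom{n-k}{h-t}$. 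Hence
\[
s_{n}(z^{(n,h)},z^{(n,k)})=\frac{n}{2h(n-h)}\binom{n}{h}^{-1}\cdot\frac{n}{2k(n-k)}\sum_{t}|tn-hk|\binom{k}{t}\binom{n-k}{h-t}.
\]
So the whole lemma reduces to the identity
\[
\sum_{t=0}^{k}|tn-hk|\binom{k}{t}\binom{n-k}{h-t}=n\,\Bigl(h-\lfloor\tfrac{hk}{n}\rfloor\Bigr)\Bigl(k-\lfloor\tfrac{hk}{n}\rfloor\Bigr)\binom{k}{\lfloor hk/n\rfloor}\binom{n-k}{h-\lfloor hk/n\rfloor},
\]
which I expect to be the main obstacle.

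\medskip

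\textbf{Step 3: proving the combinatorial identity.} Let $m=\lfloor hk/n\rfloor$, so $|tn-hk|=n(m-t)$ for $t\le m$ and $=n(t-m)$ for $t>m$ (using that $hk/n\notin\mathbb Z$ when $\gcd$ issues arise, or handling the boundary term, which contributes $0$, separately). Write $f(t)=\binom{k}{t}\binom{n-k}{h-t}$, whose total sum over $t$ is $\binom{n}{h}$ by Vandermonde. The claim becomes
\[
\sum_{t\le m}(m-t)f(t)+\sum_{t>m}(t-m)f(t)=\bigl(h-m\bigr)\bigl(k-m\bigr)f(m).
\]
I would prove this by a telescoping / Abel-summation argument: define the partial sums $F(t)=\sum_{s\le t}f(s)$ and $G(t)=\sum_{s>t}f(s)=\binom{n}{h}-F(t)$, so that $\sum_{t\le m}(m-t)f(t)=\sum_{t=0}^{m-1}F(t)$ and $\sum_{t>m}(t-m)f(t)=\sum_{t=m+1}^{k}G(t-1)=\sum_{t=m}^{k-1}G(t)$. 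The key input is the unimodality/ratio identity for hypergeometric terms: $f(t+1)/f(t)=\frac{(k-t)(h-t)}{(t+1)(n-k-h+t+1)}$, and crucially the quantity $(h-t)(k-t)f(t)-(t+1-?)\dots$ telescopes. Concretely, one checks the discrete "summation by parts" identity $(h-t)(k-t)f(t)-(t)(n-k-h+t)\,f(t-1)=n\cdot\bigl[(m'-t)\text{-type expression}\bigr]$ — in practice the cleanest route is to verify that $g(t):=(h-t)(k-t)f(t)$ satisfies $g(t)-g(t-1)=n\,f(t)\cdot\frac{hk-tn}{\text{something}}$; after massaging, the alternating-sign telescoping collapses everything to the single term at $t=m$. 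Alternatively, since this exact closed form already appears as \cite[Lemma 2.3]{amoroso2018mahler} (the remark in the excerpt notes the formula coincides despite a different normalization of $s_n$), I would either cite that computation directly or reproduce the generating-function proof: multiply by $x^t$, sum, and read off the coefficient, using $\sum_t f(t)x^t=$ the coefficient of $y^h$ in $(1+xy)^k(1+y)^{n-k}$, differentiate in $x$ to bring down the factor $t$, and evaluate the resulting rational expression at the point forced by $m=\lfloor hk/n\rfloor$. The boundary cases $h=n$ or $k=n$ are excluded by hypothesis, and the degenerate case $n\mid hk$ (where the floor is exact and one summand vanishes) must be checked to give the same formula, which it does since the vanishing term $|mn-hk|=0$ drops out harmlessly.
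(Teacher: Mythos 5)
Your Step~1 is correct and is essentially the paper's argument for \eqref{form-sn-1}: reduce the sum over $\Agothique_n$ to a sum over $h$-subsets $S$ and count the fiber size $\tfrac{h!(n-h)!}{2}$. (The paper makes the ``half the fiber lies in $\Agothique_n$'' point explicit by exhibiting, for each $S$, an odd permutation $\tau$ that stabilizes $\{1,\dots,h\}$ — namely $\tau=(n-1\;\,n)$ if $h\le n-2$ and $\tau=(1\;2)$ if $h=n-1$ — so that $\sigma\mapsto\sigma\tau$ is a bijection onto the odd part of the fiber; your phrasing is terser but the idea is the same.) For \eqref{form-sn-2}, the paper does not give a fresh computation at all: it simply says the proof is the one in \cite[Lemma 2.3]{amoroso2018mahler}, so your ``cite that computation directly'' fallback is exactly what the paper does.

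However, your attempt (Steps~2--3) to actually reconstruct that cited computation contains two concrete errors, so the direct route as written does not close. First, the constant in your reduced identity is wrong: comparing $\tfrac{n^2}{4hk(n-h)(n-k)}\binom{n}{h}^{-1}\sum_t|tn-hk|\binom{k}{t}\binom{n-k}{h-t}$ with the target $\tfrac{n^2(h-m)(k-m)}{2hk(n-h)(n-k)}\binom{n}{h}^{-1}\binom{k}{m}\binom{n-k}{h-m}$ (where $m=\lfloor hk/n\rfloor$) gives
\[
\sum_{t}|tn-hk|\binom{k}{t}\binom{n-k}{h-t}=2\,(h-m)(k-m)\binom{k}{m}\binom{n-k}{h-m},
\]
with a factor $2$, not $n$. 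Second, the simplification $|tn-hk|=n(m-t)$ for $t\le m$ (and $n(t-m)$ for $t>m$) is false unless $n\mid hk$: in general $|tn-hk|=n\bigl|t-\tfrac{hk}{n}\bigr|$, and $\tfrac{hk}{n}\ne m$ when $n\nmid hk$, so you cannot replace $\tfrac{hk}{n}$ by its floor inside the absolute value. (A quick check with $n=5$, $h=k=2$, $m=0$: $\sum_t|5t-4|f(t)=24=2\cdot 2\cdot 2\cdot f(0)$, whereas $\sum_t|m-t|f(t)=8\ne (h-m)(k-m)f(m)=12$.) These are computational slips rather than a wrong idea, and since the paper settles \eqref{form-sn-2} by citation anyway, your proposal is acceptable once the sketchy Step~3 is replaced by the intended citation.
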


\begin{proof} 
    By the definition of $s_n$ and $z^{(n,h)}$, and recalling that, for $y=(y_1,\ldots,y_n)\in H_n$, we also have $\sigma(y)\in H_n$, which is equivalent to $\sum_{j=1}^n y_{\sigma(j)}=0$ for all $\sigma \in \mathfrak{A}_{n}$, we have
    \begin{align*}
        s_{n}(z^{(n,h)},y) &= \frac{2}{n!}\sum_{\sigma \in \Agothique_{n}} \left | \frac{1}{n} \sum_{j=1}^{n}z_{j}^{(n,h)}y_{\sigma(j)} \right |\\
        &= \frac{2}{n!}\sum_{\sigma \in \Agothique_{n} } \left | \frac{1}{2h} \sum_{j=1}^{h}y_{\sigma(j)}- \frac{1}{2(n-h)}\sum_{j=h+1}^{n}y_{\sigma(j)} \right |\\
        &= \frac{2}{n!}\sum_{\sigma \in \Agothique_{n} } \left | \left(\frac{1}{2h} + \frac{1}{2(n-h)} \right) \sum_{j=1}^{h}y_{\sigma(j)} \right | \\
        &= \frac{n}{h(n-h)}\frac{1}{n!} \sum_{\sigma \in \Agothique_{n}} \left | \sum_{j=1}^{h}y_{\sigma(j)} \right |.
    \end{align*}

    To conclude the proof of \eqref{form-sn-1} we need to show that 
    \[
    \frac{n}{h(n-h)}\frac{1}{n!} \sum_{\sigma \in \Agothique_{n}} \left | \sum_{j=1}^{h}y_{\sigma(j)} \right | = \frac{n}{2h(n-h)} \binom{n}{h}^{-1} \sum_{\substack{ S \subset \{1,\dots,n\}\\|S|=h }} \left | \sum_{j \in S} y_{j} \right| .
    \]

    Let $S \subset \{1,\dots,n \}$ such that $|S|=h$. Let \[G_{S}:=\{ \sigma \in \Agothique_{n}\mid  \sigma(\{1,\ldots,h\})=S \}\] and \[G'_{S}:=\{ \sigma \in \Sgothique_{n}\mid \sigma(\{1,\ldots,h\})=S \} \simeq \Sgothique_{h} \times \Sgothique_{n-h}.\] We suppose first that $h \le n-2$. If $\tau$ is the transposition switching $n$ with $n-1$, the map $\sigma \mapsto \sigma \circ \tau$ is a bijection between the sets $
    G_{S}$ and $G'_{S} \setminus G_{S}$.
In particular \[|G_{S}|=\frac{|G'_{S}|}{2}=\frac{h!(n-h)!}{2}\] providing the sought-for equality. If $h=n-1$, we use the same argument, but with $\tau=(1 \; 2)$ this time.

    The proof of \eqref{form-sn-2} now is precisely the one done in the proof of \cite[Lemma 2.3]{amoroso2018mahler}.
\end{proof}

We recall the following result which gives the asymptotic estimate for \eqref{form-sn-2}.
\begin{Lemma}[{\cite[Lemma 2.4]{amoroso2018mahler}}]\label{Lemma 1.4}
    Let $(n_{m})_{m}$, $(h_{m})_{m}$, $(k_{m})_{m}$ be sequences of real numbers satisfying  $0< h_{m},k_m<n_m$ and such that
    \[
    \underset{m \rightarrow +\infty}{\lim}n_{m}=+\infty, \quad \underset{m \rightarrow +\infty}{\lim} \frac{h_{m}}{n_{m}}=u, \quad \underset{m \rightarrow +\infty}{\lim} \frac{k_{m}}{n_{m}}=v
    \]
    for some $u,v \in [0,1]$. Then (with the convention $1/0=+\infty$), we have
    \[
    \underset{m \rightarrow +\infty}{\lim} 2 \sqrt{n_{m}} \cdot s_{n_{m}} \left(z^{(n_{m},h_{m})},z^{(n_{m},k_{m})} \right)=\frac{1}{\sqrt{2 \pi uv(1-u)(1-v)}}
    .\]

\end{Lemma}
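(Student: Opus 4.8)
The plan is to deduce the limit from the closed formula \eqref{form-sn-2} of Lemma \ref{Lemma grosse égalité} by a Stirling-type asymptotic analysis, in the spirit of Amoroso's original argument. Throughout write $n=n_m$, $h=h_m$, $k=k_m$, put $q=q_m=h_mk_m/n_m$ and $p=p_m=\lfloor q_m\rfloor$ (so that $|p-q|<1$), and let $X_m$ be the random variable counting the number of elements of a uniformly random $h$-subset of $\{1,\dots,n\}$ that lie in $\{1,\dots,k\}$; thus $X_m$ is hypergeometric, $\mathbb{E}X_m=q$, $\mathrm{Var}(X_m)=h\tfrac{k}{n}\tfrac{n-k}{n}\tfrac{n-h}{n-1}=:\sigma_m^2=\tfrac{hk(n-h)(n-k)}{n^3}\bigl(1+O(1/n)\bigr)$, and $\binom{n}{h}^{-1}\binom{k}{p}\binom{n-k}{h-p}=\mathbb{P}(X_m=p)$. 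With this notation \eqref{form-sn-2} becomes
\[
2\sqrt{n}\,s_{n}\bigl(z^{(n,h)},z^{(n,k)}\bigr)=\frac{n^{5/2}(h-p)(k-p)}{hk(n-h)(n-k)}\,\mathbb{P}(X_m=p),
\]
and it is this identity that I would analyse.

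Consider first the generic case $u,v\in(0,1)$, so that $h,k,n-h,n-k\to\infty$ and $\sigma_m^2\sim n\,uv(1-u)(1-v)\to+\infty$. Here $h-p=\tfrac{h(n-k)}{n}+O(1)\sim nu(1-v)$ and $k-p=\tfrac{k(n-h)}{n}+O(1)\sim nv(1-u)$, so the rational prefactor $\tfrac{n^{2}(h-p)(k-p)}{hk(n-h)(n-k)}$ tends to $1$. To estimate $\mathbb{P}(X_m=p)$ I would apply Stirling's formula to the six factorials in $\binom{k}{p}\binom{n-k}{h-p}/\binom{n}{h}$: the exponential contribution equals $kH(p/k)+(n-k)H\!\bigl(\tfrac{h-p}{n-k}\bigr)-nH\!\bigl(\tfrac hn\bigr)$ with $H(t)=-t\log t-(1-t)\log(1-t)$, and it is $o(1)$ because the two local proportions $p/k=\tfrac hn+O(1/k)$ and $\tfrac{h-p}{n-k}=\tfrac hn+O(1/(n-k))$ agree with the global one $h/n$ up to small errors: a second-order Taylor expansion of $H$ about $h/n$, whose first-order term cancels since $k\cdot\tfrac pk+(n-k)\cdot\tfrac{h-p}{n-k}=h=n\cdot\tfrac hn$, leaves a weighted-square remainder of order $O(1/k)+O(1/(n-k))=o(1)$. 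The remaining algebraic Stirling factor simplifies to $(2\pi\sigma_m^2)^{-1/2}(1+o(1))$, so that $\mathbb{P}(X_m=p)=(1+o(1))(2\pi\sigma_m^2)^{-1/2}$; feeding this into the displayed identity yields $2\sqrt{n}\,s_{n}(z^{(n,h)},z^{(n,k)})\sim\sqrt{n}\,(2\pi\sigma_m^2)^{-1/2}\to(2\pi uv(1-u)(1-v))^{-1/2}$, as desired.

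For the boundary cases, where $u$ or $v$ belongs to $\{0,1\}$ and the claimed limit is $+\infty$, I would first reduce to $u,v\in[0,\tfrac12]$. Indeed $s_n$ is symmetric, and for $n\ge 5$ the $\Agothique_n$- and $\Sgothique_n$-orbits of $z^{(n,h)}$ coincide, because its $\Sgothique_n$-stabiliser $\Sgothique_h\times\Sgothique_{n-h}$ contains an odd permutation ($\max(h,n-h)\ge 3$); since reversing the order of coordinates sends $z^{(n,n-h)}$ to $-z^{(n,h)}$ and $s_n(-x,y)=s_n(x,y)$, this gives $s_n(z^{(n,n-h)},z^{(n,k)})=s_n(z^{(n,h)},z^{(n,k)})$, and combined with the symmetry of $s_n$ it reduces everything to the case $u=0$, i.e.\ $h/n\to 0$. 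Now \eqref{form-sn-2} always has $h-p\ge 1$ and $k-p\ge 1$, while $\mathbb{P}(X_m=p)$ is bounded below by a positive constant times $(1+\sigma_m)^{-1}$ since $p$ lies within $O(1)$ of the mode of the hypergeometric law, near which consecutive probability ratios are bounded (if $\sigma_m\to\infty$ along a subsequence, the local limit theorem used above even gives $\mathbb{P}(X_m=p)\sim(2\pi\sigma_m^2)^{-1/2}$). Inserting these bounds into the displayed expression and using $\sigma_m^2=\tfrac{hk(n-h)(n-k)}{n^3}(1+o(1))$ together with $h=o(n)$, one checks that $2\sqrt{n}\,s_{n}(z^{(n,h)},z^{(n,k)})$ is at least a positive constant times $\sqrt{n}$, hence tends to $+\infty$.

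The main obstacle is the generic case: one must render the Stirling expansion of $\mathbb{P}(X_m=p)$ effective and, crucially, uniform along the sequences $(h_m)_m$ and $(k_m)_m$ — in particular controlling the entropy term and exploiting $|p-\mathbb{E}X_m|=O(1)$ — whereas, once this local-limit-theorem input is in hand, the convergence in the generic case and the divergence in the boundary cases follow by straightforward bookkeeping.
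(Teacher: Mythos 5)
The paper does not prove this lemma: it is quoted verbatim as \cite[Lemma 2.4]{amoroso2018mahler} and used as a black box (the author only verifies, in Lemma \ref{Lemma grosse égalité}, that his normalisation of $s_n$ yields the same closed formula \eqref{form-sn-2} as Amoroso's, so that the cited asymptotic applies). Your proposal therefore does something genuinely different: it supplies a self-contained proof from \eqref{form-sn-2}. The core of your argument is sound. The identification of $\binom{n}{h}^{-1}\binom{k}{p}\binom{n-k}{h-p}$ with a hypergeometric probability evaluated within $O(1)$ of its mean, the second-order cancellation of the entropy term, and the computation showing that the rational prefactor tends to $1$ while the algebraic Stirling factor gives $(2\pi\sigma_m^2)^{-1/2}$ all check out and deliver the stated limit when $u,v\in(0,1)$; the symmetry reduction $s_n(z^{(n,n-h)},y)=s_n(z^{(n,h)},y)$ via an odd element of the stabiliser $\Sgothique_h\times\Sgothique_{n-h}$ is also correct.

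Two points in your boundary case need repair, though neither is fatal. First, the crude bound $h-p\ge 1$, $k-p\ge 1$ is too weak: fed into your displayed identity together with $\mathbb{P}(X_m=p)\gtrsim(1+\sigma_m)^{-1}$ it gives only about $n^4\bigl(hk(n-h)(n-k)\bigr)^{-3/2}$, which tends to $0$ for, say, $h\sim\sqrt n$ and $k\sim n/2$. Use instead $h-p\ge h(n-k)/n$ and $k-p\ge k(n-h)/n$, which make the prefactor at least $\sqrt{n}$ exactly as in the generic case; then $2\sqrt{n_m}\,s_{n_m}\gtrsim\sqrt{n}\,(1+\sigma_m)^{-1}\gtrsim\sqrt{n/h}\to+\infty$. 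Note this is weaker than the "constant times $\sqrt{n}$" you assert (for $h\sim\sqrt n$ one only gets $n^{1/4}$), but it still diverges. Second, the claim that consecutive probability ratios are bounded near the mode is not automatic for the hypergeometric law when $h$ or $k$ is small; it does hold here, but only after the reduction to $h,k\le n/2$ and a short computation such as $\mathbb{P}(X=p+1)/\mathbb{P}(X=p)\le 1+\tfrac{n}{h(n-k)}+\tfrac{n}{k(n-h)}+\tfrac{n^2}{hk(n-h)(n-k)}\le 9$, which you should spell out.
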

We can now prove the  estimate for $c_n$ from Proposition \ref{(2.2)}.
\begin{proof}[Proof of Proposition \ref{(2.2)}]
For an integer $m\geq 1$, let $h_m$ and $k_m$ be the indices where the minimum is attained in the definition of $c_m$. Applying Lemma \ref{Lemma 1.4}  to the sequences $(h_m)_m$, $(k_m)_m$ and $(n_m)_m=(m)_m$, we obtain
\[\underset{m \rightarrow +\infty}{\lim} 2{\sqrt{2 \pi uv(1-u)(1-v)m}} \cdot s_{m}(z^{(m,h_{m}},z^{(m,k_{m})}))=1\] for some $u,v\in[0,1]$,
and using the fact that
    \[
    \underset{0<u,v<1}{\max}uv(1-u)(1-v)=\frac{1}{16}
    \]
we get the asymptotic estimate  \eqref{(2.2)}. 
\end{proof}

We have now collected all the results needed to prove Proposition \ref{thm1.1-it2}.
  \begin{proof}[Proof of Proposition \ref{thm1.1-it2}]

Combining Proposition \ref{Mahler-sn} and Proposition \ref{(2.1)} we get
\[
c_{n} \frac{2\log(M(\beta))}{n} |y|_{1} \le \frac{1}{n} \left(\frac{4}{n!} \log(M(\alpha))\right) \le \frac{2\log(M(\beta))}{n} |y|_{1}
\]
or equivalently
\[
\log(M(\beta)^{c_{n}|y|_{1}}) \le \log(M(\alpha)^{2/n!}) \le \log(M(\beta)^{|y|_{1}}).
\]
The result follows by taking the exponential and using Proposition  \ref{(2.2)}. 
\end{proof}

To conclude the proof of Theorem \ref{thm 1.1}, we  need the following lemma:
\begin{Lemma} \label{Lemma 1}
    Let $n\geq 5$ and let $ y=(y_1,\ldots,y_n) \in H_{n}$ be such that  $y_{j+1}-y_{j} \ge 1$ for all $j=1,\dots,n-2$. 
Then 
    \begin{enumerate}
        \item\label{item1-lemma1} If $y_{n}-y_{n-1} \ge 1$, then 
        $|y|_{1} \ge \frac{n-2}{4}.$
        \item\label{item2-lemma1} If $y_{n-1}=y_{n}$, then 
        $|y|_{1} \ge \frac{n-3}{5}.$
    \end{enumerate}
\end{Lemma}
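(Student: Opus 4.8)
The plan is to exploit the fact that the vector $y$ lives in $H_n$, i.e.\ $\sum_j y_j=0$, together with the spacing hypothesis $y_{j+1}-y_j\ge 1$, to force the partial sums to be large in absolute value. Write $S=\sum_{j=1}^n|y_j|$, so that $|y|_1=S/n$. Since the $y_j$ are (weakly) increasing and sum to $0$, there is a threshold index $m$ such that $y_1\le\cdots\le y_m\le 0\le y_{m+1}\le\cdots\le y_n$ (with the usual care if some $y_j=0$); then $S=2\sum_{j>m}y_j=-2\sum_{j\le m}y_j$, exactly as in the proof of Lemma \ref{formule des classes}. The key inequality is then that, because consecutive gaps are $\ge 1$, each $|y_j|$ is bounded below by its distance (in index) from the sign-change location: more precisely $y_{m+i}\ge y_{m+1}+(i-1)\ge i-1$ for the positive part in case \eqref{item1-lemma1} (all gaps $\ge 1$), and symmetrically on the negative side. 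Summing these linear lower bounds gives $S\ge$ roughly $2\cdot\frac{1}{2}(n/2)^2/ (n/2)$-type quantities; one must just be careful that only $n-2$ of the $n-1$ gaps are assumed $\ge 1$ in the hypothesis, and in case \eqref{item2-lemma1} the last gap is $0$, so effectively one works with $n-3$ genuine gaps.

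Concretely, for \eqref{item1-lemma1} I would argue as follows. All $n-1$ consecutive gaps are $\ge 1$, so $y_j\ge y_i+(j-i)$ for $i<j$. Fix the index $m$ with $y_m\le 0<y_{m+1}$ (if $0$ is attained, absorb it into whichever side is convenient). Then for $1\le j\le m$ we have $y_j\le y_m-(m-j)\le -(m-j)$, hence $|y_j|\ge m-j$, and for $m+1\le j\le n$ we have $y_j\ge y_{m+1}+(j-m-1)\ge j-m-1+\varepsilon$ for some $\varepsilon>0$ coming from $y_{m+1}>0$, so $|y_j|\ge j-m-1$. Summing, $S\ge \sum_{j=1}^{m}(m-j)+\sum_{j=m+1}^{n}(j-m-1)=\binom{m}{2}+\binom{n-m}{2}$. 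This is minimized over $m$ near $m=n/2$, where it equals about $n^2/4$, and a clean uniform bound is $\binom{m}{2}+\binom{n-m}{2}\ge \frac{(n-2)n}{4}$ (one checks the worst case $m=\lfloor n/2\rfloor$ gives at least this). Dividing by $n$ yields $|y|_1\ge\frac{n-2}{4}$. The constant $\frac14$ is exactly what one gets from the two triangular sums, so this should come out cleanly; the only nuisance is the bookkeeping when $n$ is odd or when $0$ is attained, which I would handle by a short case split or by noting the bound is an inequality that only improves in the boundary situations.

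For \eqref{item2-lemma1} the situation is the same except that the gap $y_n-y_{n-1}$ is $0$ rather than $\ge 1$. So effectively $y_{n-1}$ and $y_n$ are "stuck together" and one only has $n-2$ strict gaps among the first $n-1$ values. I would treat the pair $\{y_{n-1},y_n\}$ as a single node carrying weight $2$: relabel so that there are $n-1$ distinct levels $y_1<y_2<\cdots<y_{n-1}=y_n$ with consecutive gaps $\ge 1$, and redo the partial-sum estimate. The positive side now contributes $2y_n=2y_{n-1}$ plus the contributions of the intermediate positive $y_j$, and summing the linear lower bounds gives $S\ge$ something of the shape $\binom{m}{2}+\binom{n-1-m}{2}+(\text{extra from the doubled top node})\ge \frac{(n-3)(n)}{5}$ after optimizing in $m$; the factor $\frac15$ rather than $\frac14$ is the price of one lost gap and the clustering, and verifying that the minimum over the cluster position really gives $\ge\frac{n-3}{5}\cdot n$ is the one genuinely fiddly computation. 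I expect this optimization-over-$m$ step, and pinning down the exact constants $\frac{n-2}{4}$ and $\frac{n-3}{5}$ (as opposed to asymptotically correct but messier expressions), to be the main obstacle; everything else is the elementary "increasing sequence summing to zero has large partial sums" idea already used implicitly elsewhere in the paper.
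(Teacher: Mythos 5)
Your plan for part \eqref{item1-lemma1} is complete and correct, and it follows the same elementary mechanism as the paper: locate the sign-change index $m$, bound $|y_j|$ from below by the index distance to $m$ using the unit gaps, and sum the two triangular numbers $\binom{m}{2}+\binom{n-m}{2}\ge\frac{n(n-2)}{4}$. (The paper itself simply cites Amoroso's Lemma on p.~1614 for this part, so you actually supply more detail than the paper does here.)

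For part \eqref{item2-lemma1}, however, you stop short of a proof: you propose a ``doubled top node'' reformulation and then explicitly defer the key step, calling the optimization over the cluster position ``the one genuinely fiddly computation'' and ``the main obstacle.'' That deferred step is precisely what needs to be done, so as written there is a genuine gap. It is worth knowing that the paper's actual argument for \eqref{item2-lemma1} avoids your doubled-node bookkeeping entirely and the ``fiddly'' issue does not arise: pick $k\le n-2$ with $y_k\le 0<y_{k+1}$ (the bound $k\le n-2$ is forced by $y_{n-1}=y_n$), bound $-y_j\ge k-j$ for $j\le k$ and $y_j\ge j-k-1$ for $k+1\le j\le n-1$, and simply discard $|y_n|\ge 0$. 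This gives
\[
n|y|_1\;\ge\;\frac{k(k-1)}{2}+\frac{(n-k-1)(n-k-2)}{2}\;\ge\;\frac{(n-1)(n-3)}{4},
\]
the last inequality by convexity with the (real) minimum at $k=(n-1)/2$. Finally, $\frac{(n-1)(n-3)}{4n}\ge\frac{n-3}{5}$ is equivalent to $5(n-1)\ge 4n$, i.e.\ $n\ge5$. So the constant $\tfrac15$ is not the outcome of a delicate optimization over the cluster position as you anticipated, but a cosmetic simplification of $\frac{(n-1)(n-3)}{4n}$ using the standing hypothesis $n\ge5$. Your approach would also work (retaining the extra contribution from $y_n$ only strengthens the bound), but you would still need to carry out the same convexity-plus-$n\ge5$ cleanup that you currently leave open.
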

\begin{proof}
    The proof of \eqref{item1-lemma1} is given in \cite[Lemma on p. 1614]{amoroso2018mahler}.
    For the proof of point \eqref{item2-lemma1}, let $1\leq k \le n-1$ be such that $y_{k} \le 0 < y_{k+1}$. 
Notice that  $k\leq n-2$ as $y_{n-1}=y_n$ by hypothesis.

We then have, $y_j \leq y_k-(k-j)\leq -(k-j)$ for $1\leq j\leq k$ and $y_j\geq y_{k+1}+(j-k-1)\geq j-k-1$ for $k+1\leq j\leq n-1$. So
    \begin{align*}
        n|y|_{1} &=\sum_{j=1}^{k}(-y_{j})+\sum_{j=k+1}^{n-1}y_{j}+|y_{n}|\ge \sum_{h=0}^{k-1}h+\sum_{h=0}^{n-k-2}h \\
        &= \frac{(k-1)k}{2}+\frac{(n-k-2)(n-k-1)}{2}\ge \frac{(n-1)(n-3)}{4}.
    \end{align*}
\end{proof}

We can finally conclude the proof of Theorem \ref{thm 1.1}.
\begin{proof}[Proof of point \eqref{thm1.1-it3} of Theorem \ref{thm 1.1}]
 If $\alpha=\beta_{1}^{a_{1}}\dots \beta_{n}^{a_{n}}$ is a generator of $\rm{Gal}(\Q(\beta_{1}, \dots, \beta_{n})/\Q)$ then,  by point \eqref{thm1.1-it1} of Theorem \ref{thm 1.1}, we have \[|\{a_{i}\mid  1 \le i \le n \}| \ge n-1.\] If the $a_{i}$'s are all distinct, then so are the $y_{i}$'s defined as \[y_{j}= a_{j} - \frac{1}{n}\sum_{i=1}^{n}a_{i}.\] 
As all $a_{i}$'s are integers, $|y_{\ell}-y_k|\geq 1$ for $\ell\neq k$ and moreover, since for every $\sigma\in \Sgothique_{n}$ we have $|\sigma(y)|_{1}=|y|_{1}$, up to reordering the terms we can suppose that $y_{j+1}-y_{j} \ge 1$ for all $1\leq j\leq n-1$.
Thus by  Lemma \ref{Lemma 1} \eqref{item1-lemma1} we have $|y|_{1} \ge \frac{n-2}{4}$.

If two $a_{i}$'s are equal, so are the corresponding $y_{i}$'s. Reasoning as before, up to reordering the $a_i$'s we might assume that $y_{j+1}-y_{j} \ge 1$ for $j=1,\dots,n-2$ and $y_{n-1}=y_n$. So, by Lemma \ref{Lemma 1} \eqref{item2-lemma1} we have $|y|_{1} \ge \frac{n-3}{5}$.

    In all cases, we have $|y|_{1} \ge \frac{n-3}{5}$, and therefore, using also Proposition \ref{(2.2)}, we get the asymptotic, as $n$ tends to infinity
    \[
    c_{n}|y|_{1} \sim \sqrt{\frac{2}{\pi n}}|y|_{1} \ge \sqrt{\frac{2}{\pi n}} \left(\frac{n-3}{5}\right) \sim \sqrt{\frac{2n}{25 \pi}}.
    \]
    This, together with Proposition \ref{thm1.1-it2}  gives 
    \[
    M(\alpha)^{\frac{2}{n!}} \ge M(\beta)^{(1+o(1))\sqrt{\frac{2n}{25 \pi}}}
    \]
    and taking logarithms we have
    \[
    h(\alpha) \ge (1+o(1))\sqrt{\frac{2n}{25 \pi}}\log(M(\beta)).
    \]

    By the explicit version of Dobrowolski's theorem  by Voutier \cite[Theorem, p. 83]{voutier}, we have 
    \[
    \log(M(\beta)) \ge \frac{1}{4}\left( \frac{\log(\log(n))}{\log(n)} \right)^{3}
    \]
concluding the  proof.
\end{proof}
\section{Proof of Theorem \ref{thm 1.2} } \label{Générateur de An version additive}
The aim of this section is to prove Theorem \ref{thm 1.2}, which is the analogue of Theorem \ref{thm 1.1} for \emph{additive} generators, that is generators obtained as linear combinations of conjugates. 

As usual, if $\beta_1,\ldots,\beta_n\in \overline{\Q}$ is a full set of Galois conjugates, when writing $\Agothique_{n} \subseteq \mathrm{Gal}(\Q(\beta_{1},\dots,\beta_{n})/\Q)$, we identify $\sigma\in \Agothique_{n}$ with the automorphism $\sigma(\beta_i)=\beta_{\sigma(i)}$. As before, we split the proof in different parts for clarity.
\subsection{Proof of Theorem \ref{thm 1.2}, point  \eqref{thm 1.2-it1}} \label{point-1-theorem1.3}

The proof of this point is the same, in additive notation, as the one carried in Section \ref{sec-proof-gen-mult} and it is even simpler than its multiplicative variant.
We begin with the following additive version of Proposition \ref{prop pour montrer le point (1)}.

\begin{prop} \label{prop version additive}
Let $\beta$ be an algebraic number of degree $n\geq 5$ with conjugates $\beta_{1},\ldots,\beta_{n}$ over $\Q$.  Suppose that $\Agothique_{n} \subset \mathrm{Gal}(\mathbb{Q}(\beta_{1},\ldots,\beta_{n})/\mathbb{Q})$. Then, for any integers $v_{1},\ldots,v_{n}$, not all equal, we have $$\sum_{i=1}^{n}v_{i}\beta_{i} \not\in \Q.$$
\end{prop}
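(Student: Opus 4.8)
The plan is to mimic, in additive notation, the argument used in the proof of Proposition \ref{prop pour montrer le point (1)}, replacing products of powers of conjugates by $\Q$-linear combinations. Assume for contradiction that $\gamma:=\sum_{i=1}^n v_i\beta_i\in\Q$ with the $v_i$ not all equal. Since $\Agothique_n\subseteq\mathrm{Gal}(\Q(\beta_1,\ldots,\beta_n)/\Q)$, the element $\gamma$ is fixed by every $3$-cycle, and subtracting two such relations will produce new relations with strictly fewer "active" coefficients; iterating this should force all the $v_i$ to be equal, the desired contradiction.

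Concretely, after renumbering we may assume $v_1\neq v_2$ (so in particular $\beta=\beta_1$, matching the normalization in Proposition \ref{prop pour montrer le point (1)}). Applying the $3$-cycle $(1\;3\;2)$ to the equality $\sum_i v_i\beta_i=\gamma$ and subtracting the original equality gives
\[
(v_1-v_2)\beta_1+(v_2-v_3)\beta_2+(v_3-v_1)\beta_3=0,
\]
a homogeneous relation supported on $\{\beta_1,\beta_2,\beta_3\}$ with not-all-zero coefficients (since $v_1\neq v_2$). Now apply to this relation the $3$-cycle $(1\;i\;j)$ for $i\neq j$, $i,j\geq 4$, and subtract again: the terms indexed by $2$ and $3$ cancel and one obtains $(v_1-v_2)(\beta_1-\beta_i)=0$, i.e. $\beta_1=\beta_i$ for every $i\geq 4$ — contradiction already, unless one is worried about small $n$, but $n\geq 5$ guarantees such $i,j$ exist. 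To also rule out the possibility that the cancellation is vacuous, one checks (exactly as in the multiplicative proof, using further cycles $(1\;2\;k)$ and $(1\;3\;\ell)$ with $k,\ell\notin\{2,3,i\}$) that in fact $\beta_1=\beta_i$ for \emph{all} $i$, which is absurd since the $\beta_i$ are distinct conjugates of an algebraic number of degree $n\geq 2$.

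The main point to be careful about — the analogue of the only real subtlety in the multiplicative case — is the bookkeeping of which coefficients are guaranteed nonzero at each stage, so that the relations produced are genuinely nontrivial; here this is actually \emph{easier} than in Proposition \ref{prop pour montrer le point (1)} because linear independence arguments over $\Q$ are cleaner than multiplicative independence, and one never needs to pass to a power or worry about roots of unity. In particular, unlike the multiplicative setting, there is no hypothesis of the form "$\beta^k\notin\Q$" to impose: a relation $\sum v_i\beta_i\in\Q$ with not all $v_i$ equal directly yields, after one application-and-subtraction, a nontrivial homogeneous $\Q$-linear dependence among three distinct conjugates, and the rest is the same combinatorial propagation of equalities $\beta_i=\beta_j$ via $3$-cycles as in the proof of Proposition \ref{prop pour montrer le point (1)}. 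This is why, as remarked in the outline, the additive case is even simpler than its multiplicative variant.
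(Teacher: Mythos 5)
Your proof is correct and follows essentially the same route as the paper's: apply $(1\;3\;2)$ and subtract to get the homogeneous relation $(v_1-v_2)\beta_1+(v_2-v_3)\beta_2+(v_3-v_1)\beta_3=0$, then apply $(1\;i\;j)$ with $i,j\geq 4$ and subtract to force $\beta_1=\beta_i$, a contradiction. You are also right that the contradiction is already reached once $\beta_1=\beta_i$ for a single $i\geq 4$ (since the $\beta_i$ are distinct conjugates), so the further cycles $(1\;2\;k)$, $(1\;3\;\ell)$ — which the paper also applies — are only needed for parallelism with the multiplicative argument, and that no analogue of the hypothesis $\beta^k\notin\Q$ is required here.
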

\begin{proof}
    The proof is similar to that of Proposition \ref{prop pour montrer le point (1)},  and we identify the elements of $\mathrm{Gal}(\mathbb{Q}(\beta_{1},\ldots,\beta_{n})/\mathbb{Q})$  with permutations in the usual way. 

Assume  that $v_1\neq v_2$ and that there exists $r\in \Q$ such that \begin{equation*}\sum_{i=1}^{n}v_{i}\beta_{i}=r.\end{equation*} 
Applying the cycle $(1 \; 3 \; 2)$ to both sides of this equality  we obtain ${(v_1-v_2)}\beta_1+{(v_2-v_3)}\beta_2+{(v_3-v_1)}\beta_3=1$.
Applying  to this the cycle $(1 \; i \; j)$, for $i\neq j$ and $i,j\geq 4$, we obtain $(v_1-v_2)\beta_1=({v_1-v_2})\beta_i$ for all $i\geq 4$. Letting act on both sides of this equality 
 the cycles $(1 \; 2 \; k)$ and $(1 \; 3 \; \ell)$ for $k,\ell\not\in\{2,3,i\}$, we  obtain  $({v_{1}-v_{2}})\beta_1=({v_{1}-v_{2}})\beta_i$ for all $1\leq i\leq n$. 
So $\beta_{1}=\ldots=\beta_{n}$ and $\beta \in \mathbb{Q}$, which contradicts  the hypothesis that $\Agothique_{n} \subset \mathrm{Gal}(\mathbb{Q}(\beta_{1},\ldots,\beta_{n})/\mathbb{Q})$.
\end{proof}
We can now conclude the proof of the first part of Theorem \ref{thm 1.2}.
\begin{proof}[Proof of point $(1)$ of Theorem \ref{thm 1.2}]
The proof follows exactly  the one of point \eqref{thm1.1-it1} of Theorem \ref{thm 1.1}, by using the additive notation instead of the multiplicative one. 

Suppose that $\alpha=a_1\beta_1+\ldots+a_n\beta_n$ is such that $\Q(\alpha)=\mathbb{Q}(\beta_{1},\ldots,\beta_{n})$. If $|\{a_i\mid 1\leq n\leq i\}|<n-1$ then one can show that there exists a non-trivial $\sigma\in \mathrm{Gal}(\mathbb{Q}(\beta_{1},\ldots,\beta_{n})/\mathbb{Q})$ such that $\sigma(\alpha)=\alpha$ (we take $\sigma=(i \; j \; k)$ if $a_i=a_j=a_k$ or $\sigma=(i \; j)(k \; \ell)$ if $a_i=a_j$ and $a_k=a_{\ell}$).

For the converse, assume that $|\{a_{i}\mid 1 \le i \le n \}| \geq n-1$, let $\sigma\in \mathrm{Gal}(\Q(\beta_{1},...,\beta_{n})/ \Q(\alpha))$ and $\tau=\sigma^{-1}$. 
The equality $\sigma(\alpha)=\alpha$ gives  $\sum_{k=1}^{n}{(a_{k}-a_{\tau(k)})}\beta_{k}=0$.
By Proposition \ref{prop version additive}, there exists an integer $c$ such that $a_{k}-a_{\tau(k)}=c$ for all $k$. 
If $\ell$ if the order of $\tau$, we have, $ a_{1}=a_{1}+c\ell$, so $c=0$ and $a_{\tau(k)}=a_{k}$ for all $k$. 
As $|\{a_{i}, 1 \le i \le n \}|\geq n-1$, we have that $\tau$ is either the identity or a transposition, but this last case cannot occur as $\tau\in \Agothique_{n}$.
\end{proof}

\subsection{Proof of Theorem \ref{thm 1.2}, point  \eqref{thm 1.2-it2}}\label{sec-point(2)-thm1.3}

This section deals with the proof of the last part of Theorem \ref{thm 1.2}. 

First, as in the multiplicative case, we notice that one can easily establish the following  upper bound for the height of $\alpha$: 
\begin{prop}\label{upp-b-m(alpha)-additive} Let $\beta$ be an algebraic integer of degree $n \ge 5$, let $\beta_{1}, \ldots, \beta_{n}$ be its conjugates and suppose that $\mathrm{Gal}(\mathbb{Q}(\beta_{1}, \ldots, \beta_{n})/ \mathbb{Q}) = \mathfrak{A}_{n}$. Let $a_1,\ldots,a_n$ be integers and suppose that
 $\alpha=a_1 \beta_1+\ldots+a_n\beta_n$ is a generator of $\mathbb{Q}(\beta_{1}, \ldots, \beta_{n})/ \mathbb{Q}$. Then 
    $$\log\left(M(\beta)\sum_{i=1}^{n}|a_{i}|\right) \ge h(\alpha).$$
\end{prop}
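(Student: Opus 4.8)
The goal is an upper bound for $h(\alpha)$, which is a statement about the archimedean sizes of the conjugates of $\alpha$, so the natural approach is to estimate $M(\alpha)$ directly from its defining product. Since $\beta$ is an algebraic integer and the $a_i$ are integers, $\alpha$ is an algebraic integer, hence its minimal polynomial over $\Z$ is monic and $M(\alpha) = \prod_{j}\max(1,|\alpha^{(j)}|)$ where the $\alpha^{(j)}$ run over the conjugates of $\alpha$. Because $\mathrm{Gal}(\Q(\beta_1,\ldots,\beta_n)/\Q) = \Agothique_n$ and $\alpha$ generates this extension, the conjugates of $\alpha$ are exactly the $n!/2$ numbers $\sum_{i=1}^n a_i\beta_{\sigma(i)}$ for $\sigma \in \Agothique_n$, each appearing once. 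The plan is: first bound each $\max(1,|\sigma(\alpha)|)$ by $\prod_{i=1}^n \max(1,|\beta_{\sigma(i)}|)^{|a_i|}$ (using $|\sum a_i\beta_{\sigma(i)}| \le \sum |a_i|\max(1,|\beta_{\sigma(i)}|) \le \big(\sum|a_i|\big)\prod_i\max(1,|\beta_{\sigma(i)}|)^{|a_i|}$ or a similarly crude but valid inequality), then multiply over all $\sigma \in \Agothique_n$ and use the transitivity of $\Agothique_n$ via Lemma \ref{lemme élémentaire transitif} to collect the exponents.

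\textbf{Key steps in order.} (1) Record that $\alpha$ is an algebraic integer and that, since $\alpha$ generates the extension with group $\Agothique_n$, $M(\alpha) = \prod_{\sigma\in\Agothique_n}\max\big(1,\big|\sum_{i=1}^n a_i\beta_{\sigma(i)}\big|\big)$. (2) For a fixed $\sigma$, estimate $\big|\sum_{i=1}^n a_i\beta_{\sigma(i)}\big| \le \sum_{i=1}^n |a_i|\,|\beta_{\sigma(i)}| \le \big(\sum_{i=1}^n|a_i|\big)\prod_{i=1}^n\max(1,|\beta_{\sigma(i)}|)^{|a_i|}$, and hence $\max\big(1,\big|\sum_i a_i\beta_{\sigma(i)}\big|\big) \le \big(\sum_i|a_i|\big)\prod_{i=1}^n\max(1,|\beta_{\sigma(i)}|)^{|a_i|}$, since the right-hand side is at least $1$. (3) Take the product over $\sigma\in\Agothique_n$:
\[
M(\alpha) \le \Big(\sum_{i=1}^n |a_i|\Big)^{n!/2}\prod_{\sigma\in\Agothique_n}\prod_{i=1}^n\max(1,|\beta_{\sigma(i)}|)^{|a_i|}.
\]
(4) Rewrite the double product by swapping the order and grouping: $\prod_{\sigma}\prod_i \max(1,|\beta_{\sigma(i)}|)^{|a_i|} = \prod_{k=1}^n \max(1,|\beta_k|)^{\sum_\sigma |a_{\sigma^{-1}(k)}|}$, and by Lemma \ref{lemme élémentaire transitif} (applied to the transitive group $\Agothique_n$) each inner exponent equals $\frac{|\Agothique_n|}{n}\sum_i|a_i| = \frac{(n-1)!}{2}\sum_i|a_i|$, independently of $k$; hence the product equals $M(\beta)^{\frac{(n-1)!}{2}\sum_i|a_i|}$. (5) Combine and take the $\frac{1}{[\Q(\alpha):\Q]}$-th power: with $[\Q(\alpha):\Q] = n!/2$ this gives $h(\alpha) = \frac{1}{n!/2}\log M(\alpha) \le \log\big(\sum_i|a_i|\big) + \frac{1}{n}\big(\sum_i|a_i|\big)\log M(\beta) \le \log\big(M(\beta)\sum_i|a_i|\big)$, where the last inequality uses $\frac{1}{n}\sum_i|a_i| \le \sum_i|a_i|$ and $\log M(\beta)\ge 0$ — or one argues slightly more carefully to land exactly on the stated form $\log\big(M(\beta)\sum_i|a_i|\big)$.

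\textbf{Main obstacle.} There is no serious obstacle — this is the ``easy direction'', mirroring the upper bound in the multiplicative case (the inequality $M(\beta)^{|y|_1}\ge M(\alpha)^{2/n!}$ of Proposition \ref{thm1.1-it2}). The only points requiring a little care are: confirming that the conjugates of $\alpha$ are precisely the $\Agothique_n$-orbit of $\sum a_i\beta_i$ with no repetitions, which is exactly the content of $\alpha$ being a generator of the degree-$n!/2$ extension; and getting the combinatorial bookkeeping in step (4) right, i.e.\ that the exponent of each $\max(1,|\beta_k|)$ is the same for all $k$ by the transitivity lemma, so that the product collapses to a power of $M(\beta)$. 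A final cosmetic step is to absorb the constant $\sum_i|a_i|$ and the factor $\frac1n$ cleanly to reach the displayed inequality; since $\log M(\beta)\ge0$ and $\sum_i|a_i|\ge 1$ (as $\alpha$ generates a nontrivial extension, not all $a_i$ vanish), this is immediate.
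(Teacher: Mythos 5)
Your bound for a single conjugate,
\[
\Big|\sum_{i=1}^n a_i\beta_{\sigma(i)}\Big| \le \Big(\sum_{i=1}^n|a_i|\Big)\prod_{i=1}^n\max(1,|\beta_{\sigma(i)}|)^{|a_i|},
\]
is valid, but the extra exponents $|a_i|$ make it far too lossy. After the product over $\sigma\in\Agothique_n$ and the transitivity bookkeeping (Steps (3)–(4), which are correct), you arrive at
\[
M(\alpha)^{2/n!}\;\le\;\Big(\sum_{i=1}^n|a_i|\Big)\,M(\beta)^{\frac1n\sum_i|a_i|},
\qquad\text{i.e.}\qquad
h(\alpha)\;\le\;\log\Big(\sum_i|a_i|\Big)+\frac1n\Big(\sum_i|a_i|\Big)\log M(\beta).
\]
To reach the claimed inequality $h(\alpha)\le \log M(\beta)+\log\big(\sum_i|a_i|\big)$ you would need
$\frac1n\sum_i|a_i|\le 1$, i.e.\ $\sum_i|a_i|\le n$. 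This is false in general (nothing prevents the $a_i$ from being arbitrarily large), and the inequality you invoke, $\frac1n\sum_i|a_i|\le\sum_i|a_i|$, points in the wrong direction: it would only let you make the right-hand side \emph{larger}. So Step (5) does not close.

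\textbf{The fix is to make Step (2) less clever.} Instead of pulling in $\prod_i\max(1,|\beta_{\sigma(i)}|)^{|a_i|}$, bound each conjugate by a constant that does not depend on $\sigma$ at all:
\[
\Big|\sum_{i=1}^n a_i\beta_{\sigma(i)}\Big|\;\le\;\Big(\sum_{i=1}^n|a_i|\Big)\max_{1\le i\le n}|\beta_i|\;\le\;\Big(\sum_{i=1}^n|a_i|\Big)M(\beta),
\]
using that $\max_i|\beta_i|\le M(\beta)$ because $\beta$ is an algebraic integer. Since $M(\beta)\sum_i|a_i|\ge 1$, each factor $\max(1,|\sigma(\alpha)|)$ in $M(\alpha)=\prod_{\sigma\in\Agothique_n}\max(1,|\sigma(\alpha)|)$ is at most $M(\beta)\sum_i|a_i|$, hence $M(\alpha)\le\big(M(\beta)\sum_i|a_i|\big)^{n!/2}$ and $h(\alpha)\le\log\big(M(\beta)\sum_i|a_i|\big)$. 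This is exactly the paper's (very short) argument: one uniform bound per conjugate, no transitivity lemma needed. Your Steps (1), (3), (4) are correct and would be the right machinery if one wanted to keep the $\beta_{\sigma(i)}$-dependence, but here it only hurts.
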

\begin{proof}
    It suffices to observe that, for every $\sigma \in \Agothique_{n}$, one has
    \begin{align*}
         |\sigma(\alpha)| &= \left|\sum_{i=1}^{n}a_{i}\beta_{\sigma(i)} \right| 
       \le \max_{1\le i\le n} |\beta_{i}| \sum_{i=1}^{n}|a_{i}| 
        \le M(\beta) \sum_{i=1}^{n}|a_{i}|.
    \end{align*}
Notice that, since $\beta$ is an algebraic integer, we have  \[M(\alpha)\le \max_{\sigma\in  \Agothique_{n}}|\sigma(\alpha)|^{\frac{n!}{2}},\]
and taking logarithms, this allows us to conclude.
\end{proof}

The proof of the lower bound in Theorem \ref{thm 1.2}, point  \eqref{thm 1.2-it2}, is instead  more delicate. Before starting it we fix the setting and introduce some notation.  
Let $\beta_1,\ldots,\beta_n$ be algebraic integers of degree $n\geq 5$ and let $a_{1},\dots,a_{n}$ be integers. Let 
\begin{equation}\label{form-alpha}
\alpha=\sum_{i=1}^{n}a_{i}\beta_{i}
\end{equation}
and  assume from this point onward that $\Q(\alpha)=\mathbb{Q}(\beta_{1},\ldots,\beta_{n})$ and that $\mathrm{Gal}(\mathbb{Q}(\beta_{1},\ldots,\beta_{n})/ \mathbb{Q})=\Agothique_{n}$.

We now recall some  notation from \cite[Section 4]{amoroso2018mahler}. For $\tau \in \Sgothique_{n}$, we let
$$
\alpha_{\tau}=\sum_{i=1}^{n}a_{i}\beta_{\tau(i)},
$$
where $\Sgothique_{n}$ acts on $\mathbb{Q}(\beta_{1},\ldots,\beta_{n})$ as usual by $\sigma(\beta_{i})=  \beta_{\sigma(i)}$.

We observe that for $\sigma \in \Agothique_{n}$, we have $\sigma(\alpha)=\alpha_{\sigma}$. Also, for $\tau, \tau' \in \Sgothique_{n}$, we have 
\begin{equation} \label{Sn agit sur Q(beta)}
    \alpha_{\tau \tau'}=(\alpha_{\tau'})_{\tau}.
\end{equation}

One of the key ideas in the proof of  the last part of Theorem \ref{thm 1.2}, and one of the main novelties of our article, is the following lemma, which describes how the Mahler measure of $\alpha$ changes under the action of an element that \emph{should not act on $\alpha$}. More precisely, it compares  $M(\alpha)$ and $M(\alpha_{\tau})$, in the case where $\tau$ is a transposition, so, by hypothesis, an element extraneous to $\mathrm{Gal}(\Q(\alpha)/\Q)$.

\begin{Lemma} \label{M(tau(alpha)) en fonction de M(alpha)}
 Let $\tau \in \Sgothique_{n}$ be a transposition, then $$M(\alpha_{\tau}) \le 5^{n} M(\alpha)^{5}.$$
\end{Lemma}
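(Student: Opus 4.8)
The plan is to compare the full set of Galois conjugates of $\alpha_\tau$ with those of $\alpha$ and to exploit the fact that $\Agothique_n$ is an index-$2$ subgroup of $\Sgothique_n$. Write $G = \Sgothique_n$ and fix the transposition $\tau$; since $\tau\notin \Agothique_n$ we have a decomposition $\Sgothique_n = \Agothique_n \sqcup \tau\Agothique_n$. The conjugates of $\alpha$ (over $\Q$) are exactly the $\sigma(\alpha) = \alpha_\sigma$ for $\sigma\in \Agothique_n$, and by \eqref{Sn agit sur Q(beta)} the elements $\alpha_\tau$ has, among its Galois conjugates, the numbers $\sigma(\alpha_\tau) = \alpha_{\sigma\tau}$ for $\sigma\in \Agothique_n$, i.e. the $\alpha_{\tau'}$ with $\tau'$ ranging over the nontrivial coset $\tau\Agothique_n$. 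So $\alpha$ and $\alpha_\tau$ are conjugate over $\Q$ precisely when $[\Q(\alpha_\tau):\Q] < n!$ fails; in general $\alpha_\tau$ could have smaller degree, but it is a conjugate of $\alpha_{\tau_0}$ for $\tau_0$ the specific transposition $(1\ 2)$ up to relabeling, and in any case $\Q(\alpha_\tau)\subseteq \Q(\beta_1,\ldots,\beta_n)$, whose degree is $n!/2 \cdot 2 = n!$ only after adjoining, say, $\sqrt{\mathrm{disc}}$; more simply $\alpha_\tau$ generates a subfield of the $\Sgothique_n$-closure. The cleanest route: $M(\alpha_\tau)$ is controlled by $\prod_{\tau'\in\tau\Agothique_n}\max(1,|\alpha_{\tau'}|)$ times a denominator term, and I will bound the latter by a power of an integer denominator of the $\beta_i$ (here $1$, since the $\beta_i$ are algebraic integers, so $\alpha_\tau$ is an algebraic integer and $M(\alpha_\tau) = \prod_{\tau'}\max(1,|\alpha_{\tau'}|)$ over its actual conjugates, hence $M(\alpha_\tau)\le \prod_{\tau'\in\tau\Agothique_n}\max(1,|\alpha_{\tau'}|)$ even if there is repetition).

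The key quantitative input is a pointwise comparison $|\alpha_{\tau'}|$ versus the absolute values $|\sigma(\alpha)|$, $\sigma\in\Agothique_n$, valid for every $\tau'\in\tau\Agothique_n$. Here I use the following identity: if $\tau = (p\ q)$ is a transposition and $\sigma\in\Agothique_n$, then, writing $\tau' = \sigma\tau$, one has $\alpha_{\tau'} = \alpha_{\sigma\tau} = \sigma(\alpha_\tau)$ and $\alpha_\tau - \alpha = (a_{\sigma^{-1}(p)} - a_{\sigma^{-1}(q)})(\beta_q - \beta_p)$ — more usefully, $\alpha_\tau = \alpha + (a_p - a_q)(\beta_{q} - \beta_{p})$ directly from the definition of $\alpha_\tau = \sum a_i\beta_{\tau(i)}$ (only the indices $p,q$ move). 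Applying $\sigma$, $\alpha_{\sigma\tau} = \sigma(\alpha) + (a_p - a_q)(\beta_{\sigma(q)} - \beta_{\sigma(p)})$. Thus every conjugate of $\alpha_\tau$ is a conjugate of $\alpha$ plus a term of the form $(a_p - a_q)(\beta_i - \beta_j)$. Now $|\beta_i - \beta_j|\le 2\max_k|\beta_k| \le 2M(\beta)\le 2M(\alpha)$ if we can bound $M(\beta)$ by $M(\alpha)$, which is false in general — so instead I bound $\max_k|\beta_k|$ and $|a_p - a_q|$ in terms of $M(\alpha)$ using Proposition \ref{upp-b-m(alpha)-additive} in reverse or, better, a lower-bound companion: since $\alpha$ is a generator, $\beta_i$ lies in $\Q(\alpha)$ and one has crude height/size comparisons. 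The most robust choice is to use that $\max_\sigma|\sigma(\alpha)|\ge$ something; but the slick way matching the constants $5^n$, $M(\alpha)^5$ is: each conjugate of $\alpha_\tau$ is bounded by $|\sigma(\alpha)| + |a_p-a_q|\,|\beta_{\sigma(q)}-\beta_{\sigma(p)}|$, and I will show both $|a_p - a_q|$ and $|\beta_i-\beta_j|$ are $\le$ (small constant)$\cdot M(\alpha)$-ish after taking the product over the coset, via the identity $\prod_{\tau'\in \tau\Agothique_n}|\alpha_{\tau'}| = |N_{\Q(\beta_1,\ldots,\beta_n)/\Q}(\alpha_\tau)|^{2/n!}$-type normalization and the fact that $\alpha_\tau$ and $\alpha$ have the same norm down to $\Q$ up to sign (both are $\Sgothique_n$-conjugate families). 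This last point forces $\prod_{\tau'}|\alpha_{\tau'}| = \prod_\sigma|\sigma(\alpha)|$, i.e. the geometric means agree.

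Concretely the plan is: (i) record $\alpha_\tau = \alpha + t\gamma$ where $t = a_p - a_q\in\Z$ and $\gamma = \beta_q - \beta_p$, an algebraic integer in $\Q(\beta_1,\ldots,\beta_n)$; (ii) apply each $\sigma\in\Agothique_n$ to get $\sigma(\alpha_\tau) = \sigma(\alpha) + t\,\sigma(\gamma)$, and use $\max(1,|a+b|)\le \max(1,|a|)\cdot\max(1,2|b|)\cdot\ldots$ — more cleanly $\max(1,|x+y|)\le 2\max(1,|x|)\max(1,|y|)$ — to split; (iii) take the product over $\sigma\in\Agothique_n$ (and remember $\alpha_\tau$'s true conjugates are a subset of $\{\sigma(\alpha_\tau)\}$ with multiplicity, and since $\alpha_\tau$ is an algebraic integer, $M(\alpha_\tau)\le \prod_{\sigma\in\Agothique_n}\max(1,|\sigma(\alpha_\tau)|)^{?}$ — careful with the exponent: the $n!/2$ numbers $\sigma(\alpha_\tau)$ are the conjugates of $\alpha_\tau$ repeated $[\Q(\beta):\Q(\alpha_\tau)]$-to-$1$... actually they are exactly the $\Agothique_n$-orbit, and $M(\alpha_\tau) = \big(\prod_{\sigma\in\Agothique_n}\max(1,|\sigma(\alpha_\tau)|)\big)^{2/n!\cdot[\Q(\alpha_\tau):\Q]/\,\#\text{orbit}}$, which reduces to $M(\alpha_\tau) \le \prod_{\sigma\in\Agothique_n}\max(1,|\sigma(\alpha_\tau)|)$ when the orbit equals the conjugate set — the general bound $M(\alpha_\tau)\le\prod_{\sigma}\max(1,|\sigma(\alpha_\tau)|)$ holds regardless since repeated factors are $\ge 1$); (iv) bound $\prod_\sigma\max(1,2|t\sigma(\gamma)|)\le (2|t|)^{n!/2}\prod_\sigma\max(1,|\sigma(\gamma)|) = (2|t|)^{n!/2}M(\gamma)^{?}$, and bound $M(\gamma) = M(\beta_q-\beta_p)\le 2^n M(\beta)^2$ by standard Mahler-measure inequalities ($M(\theta_1+\theta_2)\le 2^{\deg}M(\theta_1)M(\theta_2)$), and $|t| = |a_p - a_q|\le 2\max|a_i|$; then convert $M(\beta)$ and $\max|a_i|$ into powers of $M(\alpha)$ using Proposition \ref{upp-b-m(alpha)-additive} together with a matching lower bound $\log M(\alpha)\gg \log(\cdots)$ — this is where I must be careful, because a priori $M(\alpha)$ could be as small as $1$ while $M(\beta)$ is huge. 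The resolution, and what I expect to be the main obstacle, is exactly this: one needs that $\alpha$ being a generator forces $\max|a_i|$ and $M(\beta)$ to be bounded polynomially (in fixed powers) by $M(\alpha)$. This should follow because the $\beta_i$ and the ratios $a_i/a_j$ are recoverable from $\alpha$ and its conjugates by bounded-degree, bounded-coefficient algebraic expressions (symmetric-function / linear-algebra inversion of the system $\sigma(\alpha) = \sum a_i\beta_{\sigma(i)}$), so their Mahler measures are bounded by a fixed power of $M(\alpha)$ times a fixed constant to the $n$-th power; carrying the bookkeeping so that the exponents collapse to exactly $5$ and the constant to exactly $5^n$ (absorbing the various $2^n$, $2^{n!}$-looking terms after taking the $2/n!$-th root in the definition of $M$) is the delicate part and the one I would spend the most care on, checking that each crude inequality is cheap enough to fit inside the stated budget.
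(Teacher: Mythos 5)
Your starting identity $\alpha_{\tau}=\alpha+(a_{p}-a_{q})(\beta_{q}-\beta_{p})$ is correct, and your instinct to compare the $\Agothique_{n}$-orbit of $\alpha_{\tau}$ with that of $\alpha$ is the right setup. But you then hit a wall that you explicitly flag and never get past: you need $|a_{p}-a_{q}|$ and $M(\beta_{q}-\beta_{p})$ bounded by fixed powers of $M(\alpha)$ with constants at most exponential in $n$, and you have no argument for this. In fact this cannot be obtained from Proposition \ref{upp-b-m(alpha)-additive} (which goes the wrong way), and your fallback claim that the $\beta_{i}$ and the ratios $a_{i}/a_{j}$ are recoverable from the conjugates of $\alpha$ by ``bounded-coefficient'' expressions is not true --- inverting the system $\sigma(\alpha)=\sum_i a_{i}\beta_{\sigma(i)}$ introduces denominators in the $a_{i}$. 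You also assert that $\alpha$ and $\alpha_{\tau}$ have the same norm to $\Q$ ``up to sign''; this is unjustified (the product of the two norms is an integer, but the individual norms need not agree), and the cost estimate you want to extract from it does not follow.

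The paper's proof avoids all of this with an algebraic identity you did not find: the correction term $\alpha-\alpha_{\tau}=(a_{1}-a_{2})(\beta_{1}-\beta_{2})$ (for $\tau=(1\,2)$) is itself a signed sum of four $\Agothique_{n}$-conjugates of $\alpha$. Concretely, with $\sigma=(1\,3\,2)$, $\sigma'=(1\,4\,5)$, $\delta=(4\,2)(3\,5)$, all in $\Agothique_{n}$ for $n\geq5$, one has $\gamma:=\alpha-\sigma(\alpha)=\beta_{1}(a_{1}-a_{2})+\beta_{2}(a_{2}-a_{3})+\beta_{3}(a_{3}-a_{1})$, then $\gamma-\sigma'(\gamma)=(\beta_{1}-\beta_{4})(a_{1}-a_{2})$, and applying $\delta$ gives $\alpha-\alpha_{\tau}=\delta(\alpha)-\delta\sigma(\alpha)-\delta\sigma'(\alpha)+\delta\sigma'\sigma(\alpha)$. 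Thus $\alpha_{\tau}$ is a $\pm1$-linear combination of \emph{five} Galois conjugates of $\alpha$, and the triangle inequality for heights together with Galois invariance gives $h(\alpha_{\tau})\leq5h(\alpha)+\log5$ directly, with no reference to $M(\beta)$ or to $\max_{i}|a_{i}|$ whatsoever. That is the missing idea, and without it your plan does not close.
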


\begin{proof}
   
For simplicity, we may suppose that $\tau=(1 \; 2)$. 

Let $\sigma=(1 \; 3 \; 2)$ and set $$\gamma= \alpha-\sigma(\alpha)=\beta_{1}(a_{1}-a_{2})+\beta_{2}(a_{2}-a_{3})+\beta_{3}(a_{3}-a_{1}).$$ If $\sigma'=(1 \; 4 \; 5)$ and $\delta=(4 \; 2)(3 \; 5)$, then $$\delta(\gamma-\sigma'(\gamma))=\delta((\beta_{1}-\beta_{4})(a_{1}-a_{2}))=(\beta_{1}-\beta_{2})(a_{1}-a_{2})=\alpha-\alpha_{\tau}.$$  

So we have \begin{equation}\label{alpha-tau-linear-comb}\alpha_{\tau}=\alpha-\delta(\alpha)+\delta \sigma(\alpha)+\delta \sigma' (\alpha)-\delta \sigma' \sigma (\alpha).
\end{equation} 

Notice that $\delta, \sigma, \sigma'\in \Agothique_{n}$ and so is their product, hence, by the above equality, $\alpha_{\tau}$ equals a linear combination of conjugates of $\alpha$. By elementary inequalities for the height and its invariance under Galois action, we obtain that $h(\alpha_{\tau}) \le 5h(\alpha)+\log{5}$. By using that $M(\alpha)=e^{n h(\alpha)}$, we have the sought result.
\end{proof}
We now use Lemma \ref{M(tau(alpha)) en fonction de M(alpha)} to get a lower bound for $M(\alpha)$ independent on the transposition $\tau$.
To this aim we need to introduce some more notation.

Let  $T_n$ be the set of transpositions of $\Sgothique_{n}$ and,
 for $\tau=(i ,j) \in T_{n}$, let $A_{\tau}$ be the set of permutations of $\Sgothique_{n}$ with support $\{1,\ldots,n\} \setminus \{i,j\}$ and without orbits of length 2. 

For $n \ge 3$, let $\Lambda_{n}$ be the set of permutations of $\Sgothique_{n}$ without orbits of length $1$ or $2$.
Let also $$\Delta= \prod_{\tau \in T_{n}} \prod_{\sigma \in A_{\tau}} |\alpha_{\sigma \tau}-\alpha_{\sigma}|$$
and, for a vector of integers $\mathfrak{a}=(a_1,\ldots,a_n)$, let
$$V(\mathfrak{a})=\prod_{1\le i<j\le n}(a_{j}-a_{i}).$$

For an algebraic integer $\beta$ of degree $n$ and conjugates $\beta_1,\ldots,\beta_n$ we also let \[\mathrm{disc}(\beta)=\mathrm{disc}(1,\beta,\beta^{2},\dots,\beta^{n-1})=\prod_{1\leq i<j\leq n} (\beta_i-\beta_j)^2.\]

We have the following result.
\begin{prop}\label{minoration de M(alpha)} Let $n\geq 5$, and let $\alpha=\sum_{i=1}^{n}a_{i}\beta_{i}$ be as in \eqref{form-alpha}. Set $\beta=\beta_1$ and $\mathfrak{a}=(a_1,\ldots,a_n)$. Then
\[
   M(\alpha) \ge 5^{-n/6}\left(2^{-|T_{n}|}|V(\mathfrak{a})||\mathrm{disc}(\beta)|^{1/2} \right)^{|\Lambda_{n-2}|/6}.  
\]
\end{prop}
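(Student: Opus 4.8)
The strategy is to bound $M(\alpha)$ from below by first estimating a large product $\Delta$ of differences $|\alpha_{\sigma\tau}-\alpha_\sigma|$ in two ways: once arithmetically (via norms), once analytically (via Mahler measures and Lemma \ref{M(tau(alpha)) en fonction de M(alpha)}). The key observation is that for each transposition $\tau=(i\,j)\in T_n$ and each $\sigma\in A_\tau$, the element $\sigma\tau$ lies in $\Lambda_n$ (it has no fixed point since $\sigma$ moves everything outside $\{i,j\}$ and $\tau$ swaps $i,j$, and no $2$-cycle since $A_\tau$ forbids $2$-cycles in $\sigma$ and the $2$-cycle $(i\,j)$ of $\tau$ gets merged). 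One then checks the map $(\tau,\sigma)\mapsto \sigma\tau$ is a bijection from $\bigsqcup_{\tau\in T_n}A_\tau$ onto $\Lambda_n$; counting gives $|A_\tau|=|\Lambda_{n-2}|$ (derangement-type count on the remaining $n-2$ points), so $\Delta$ is a product of $|T_n|\,|\Lambda_{n-2}|$ factors.

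\textbf{Arithmetic lower bound for $\Delta$.} Next I would compute $\prod_{\sigma\in\Agothique_n}|\sigma(\gamma)|$ for a difference $\gamma=\alpha_{\sigma'\tau}-\alpha_{\sigma'}$, i.e. its norm, which is a nonzero rational integer hence has absolute value $\ge 1$; but one needs the sharper fact that $\Delta$ itself factors as a power of $|V(\mathfrak{a})|\,|\mathrm{disc}(\beta)|^{1/2}$ times a power of $2$. Indeed, writing $\alpha_{\sigma\tau}-\alpha_\sigma$ for $\tau=(i\,j)$ and tracking the coefficient structure, each such difference is, up to the action of $\Agothique_n$, of the form $(a_{?}-a_{?})(\beta_{?}-\beta_{?})$-type expressions; taking the full product over all transpositions and all $\sigma\in A_\tau$, each pair $(a_k-a_\ell)$ and each pair $(\beta_k-\beta_\ell)$ occurs with the same multiplicity by symmetry, producing $|V(\mathfrak{a})|^{e}|\mathrm{disc}(\beta)|^{e/2}$ for the appropriate exponent $e$ related to $|\Lambda_{n-2}|$, while the $2^{-|T_n|}$ accounts for the normalization in the definition of the $z$-vectors / the factor-of-two losses. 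The cleanest route is: $\Delta\ge 2^{-|T_n|\cdot(\text{something})}|V(\mathfrak a)|^{|\Lambda_{n-2}|}|\mathrm{disc}(\beta)|^{|\Lambda_{n-2}|/2}$ by a direct norm computation, since $\Delta$ is (a power of) an integer built from these resultant-type quantities.

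\textbf{Analytic upper bound for $\Delta$ and conclusion.} On the other side, $|\alpha_{\sigma\tau}-\alpha_\sigma|\le |\alpha_{\sigma\tau}|+|\alpha_\sigma|\le M(\alpha_{\sigma\tau})+M(\alpha_\sigma)$ up to taking the right conjugate; more precisely, applying Lemma \ref{M(tau(alpha)) en fonction de M(alpha)} (and its invariance under the $\Agothique_n$-action, using \eqref{Sn agit sur Q(beta)}), one gets $M(\alpha_{\sigma\tau})\le 5^n M(\alpha)^5$, and hence a bound of the form $|\alpha_{\sigma\tau}-\alpha_\sigma|\le C$ where $C$ is controlled by $5^n M(\alpha)^5$ raised to a bounded power. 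Multiplying over all $|T_n|\,|\Lambda_{n-2}|$ factors and comparing with the arithmetic lower bound yields $M(\alpha)^{O(|T_n||\Lambda_{n-2}|)}\ge 5^{-O(n|T_n||\Lambda_{n-2}|)}\big(2^{-|T_n|}|V(\mathfrak a)||\mathrm{disc}(\beta)|^{1/2}\big)^{|\Lambda_{n-2}|}$; taking the appropriate root and absorbing $|T_n|=\binom n2$ into the constants gives exactly
\[
M(\alpha)\ge 5^{-n/6}\Big(2^{-|T_n|}|V(\mathfrak a)|\,|\mathrm{disc}(\beta)|^{1/2}\Big)^{|\Lambda_{n-2}|/6}.
\]
\textbf{Main obstacle.} The delicate point is the exact bookkeeping of exponents: showing that in the product $\Delta$ every factor $(a_k-a_\ell)$ and every factor $(\beta_k-\beta_\ell)$ appears with multiplicity precisely $|\Lambda_{n-2}|$ (so that $\Delta$ is genuinely a power of $|V(\mathfrak a)|\,|\mathrm{disc}(\beta)|^{1/2}$ times a power of $2$), and then matching the powers of $5$ and $2$ so that the final exponent is $|\Lambda_{n-2}|/6$ with a clean $5^{-n/6}$ out front. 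This requires carefully choosing which conjugate of $\alpha_{\sigma\tau}-\alpha_\sigma$ to estimate and using that $\sigma\tau\in\Lambda_n$ together with the bijection $\bigsqcup_\tau A_\tau\to\Lambda_n$; the rest is elementary inequalities for the Mahler measure and the fact that a nonzero algebraic-integer norm is at least $1$ in absolute value.
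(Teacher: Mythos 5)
The core of your proposal—compute $\Delta=\prod_{\tau}\prod_{\sigma\in A_\tau}|\alpha_{\sigma\tau}-\alpha_\sigma|$ arithmetically and analytically—is the right framework, but the way you fill in both halves does not work, and the combined exponent cannot come out as stated.

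First, your claimed bijection $(\tau,\sigma)\mapsto\sigma\tau$ from $\bigsqcup_{\tau\in T_n}A_\tau$ onto $\Lambda_n$ is false. For $\tau=(i\,j)$ and $\sigma\in A_\tau$, $\sigma$ fixes both $i$ and $j$, so $\sigma\tau(i)=\sigma(j)=j$ and $\sigma\tau(j)=\sigma(i)=i$: the $2$-cycle $(i\,j)$ is \emph{not} merged away, and $\sigma\tau\notin\Lambda_n$. In any case that bijection is not what is needed. Second, your arithmetic lower bound is needlessly roundabout and introduces a phantom $2$-power: for $\tau=(i\,j)$ and $\sigma\in A_\tau$ one has the \emph{exact identity} $\alpha_{\sigma\tau}-\alpha_\sigma=(a_j-a_i)(\beta_i-\beta_j)$, because $\sigma$ fixes $i,j$ and $\tau$ fixes the rest; taking the product gives the clean equality $\Delta=\bigl(|V(\mathfrak a)|\,|\mathrm{disc}(\beta)|^{1/2}\bigr)^{|\Lambda_{n-2}|}$ with no norm argument, no symmetry hand-waving, and no $2^{-|T_n|}$ on that side (the $2$'s come entirely from the other inequality, via $|u-v|\le 2\max(|u|,1)\max(|v|,1)$).

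The decisive gap is in the analytic side. You propose to bound each of the $|T_n|\,|\Lambda_{n-2}|$ factors of $\Delta$ individually by something of the order $5^nM(\alpha)^5$, which yields $\Delta\le (5^nM(\alpha)^5)^{|T_n||\Lambda_{n-2}|}$ and therefore an inequality of the shape $M(\alpha)^{\Theta(|T_n||\Lambda_{n-2}|)}\ge(\cdots)^{|\Lambda_{n-2}|}$. After extracting roots this gives the bracket raised to the power $\Theta(1/|T_n|)$, not $|\Lambda_{n-2}|/6$; you cannot ``absorb $|T_n|$ into the constants'' when it sits in the exponent. The paper's proof never applies Lemma \ref{M(tau(alpha)) en fonction de M(alpha)} per factor. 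Instead it first observes the injectivity: as $\tau$ ranges over $T_n$ and $\sigma$ over $A_\tau$, all the permutations $\sigma$ and $\sigma\tau$ are pairwise distinct elements of $\Sgothique_n$ (different supports, or different transposition-parts, force distinctness), so
\[
\Delta\ \le\ 2^{|T_n||\Lambda_{n-2}|}\prod_{\rho\in\Sgothique_n}\max(|\alpha_\rho|,1)\ =\ 2^{|T_n||\Lambda_{n-2}|}\,M(\alpha)\,M(\alpha_{\tau_0})
\]
for a single fixed transposition $\tau_0$, using $\Sgothique_n=\Agothique_n\sqcup\tau_0\Agothique_n$ and \eqref{Sn agit sur Q(beta)}. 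Only then is the lemma applied \emph{once}, giving $\Delta\le 2^{|T_n||\Lambda_{n-2}|}5^n M(\alpha)^6$, and comparison with $\Delta=\bigl(|V(\mathfrak a)|\,|\mathrm{disc}(\beta)|^{1/2}\bigr)^{|\Lambda_{n-2}|}$ produces exactly the exponent $|\Lambda_{n-2}|/6$ and the $5^{-n/6}$. Without this one-shot injectivity reduction your approach loses a factor of $|T_n||\Lambda_{n-2}|$ in the exponent and cannot recover the stated bound.
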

\begin{proof}
For $\tau=(i \; j) \in T_{n}$ with $1 \le i<j \le n$ and $\sigma \in A_{\tau}$, we have $$\alpha_{\sigma \tau}-\alpha_{\sigma}=(a_{j}-a_{i})(\beta_{i}-\beta_{j}).$$

Thus,  \begin{equation} \label{Delta=....}
    \Delta= (|V(\mathfrak{a})||\mathrm{disc}(\beta)|^{1/2})^{|\Lambda_{n-2}|}
\end{equation} 
as for each $\tau\in T_n$, $|A_{\tau}|=|\Lambda_{n-2}|$.

We now want to upper bound $\Delta$. Clearly we have that
\begin{equation*}
\Delta \le \prod_{\tau \in T_{n}} \prod_{\sigma \in A_{\tau}} 2\max(|\alpha_{\sigma \tau}|,1) \max(|\alpha_{\sigma}|,1).
\end{equation*}

Let $\tau$,$\tau' \in T_{n}$, and let $\sigma \in A_{\tau}$, $\sigma'\in A_{\tau'}$. Then $ \sigma\tau \neq \sigma'$ (since they have different supports). If $\tau=\tau'$ and $\sigma \neq \sigma '$ then $\tau \sigma \neq \tau' \sigma'$. If $\tau \neq \tau'$ then $\sigma\tau \neq \sigma'\tau' $ (since $\sigma$ has no orbits of length $2$), and $\sigma \neq \sigma'$ (since the supports of $\sigma$ and $\sigma'$ are not the same).

Thus, from the last inequality we deduce 
\begin{equation*}
\Delta \le 2^{|T_{n}||\Lambda_{n-2}|} \prod_{\sigma \in \Sgothique_{n}}\max(|\alpha_{\sigma}|,1).\end{equation*}

Writing $\Sgothique_{n}$ as the semidirect product of $\Agothique_{n}$ and a transposition $\tau$, that we shall fix, from the last inequality we have
\begin{align*}
    \Delta &\le 2^{|T_{n}||\Lambda_{n-2}|} \prod_{\sigma \in \Agothique_{n}}\max(|\alpha_{\sigma}|,1) \prod_{\sigma \in \Agothique_{n}}\max(|\alpha_{ \tau \sigma}|,1) \\
    &= 2^{|T_{n}||\Lambda_{n-2}|} \prod_{\sigma \in \Agothique_{n}}\max(|\sigma(\alpha)|,1) \prod_{\sigma \in \Agothique_{n}}\max(|\sigma(\alpha_{\tau})|,1) 
\end{align*}
hence by \eqref{Sn agit sur Q(beta)} we get
\begin{equation*}
    \Delta  \le 2^{|T_{n}||\Lambda_{n-2}|}M(\alpha)M(\alpha_{\tau}). 
\end{equation*}

From Lemma \ref{M(tau(alpha)) en fonction de M(alpha)} and the above inequality, we have $$\Delta \le 2^{|T_{n}||\Lambda_{n-2}|}M(\alpha)M(\alpha_{\tau}) \le 2^{|T_{n}||\Lambda_{n-2}|} 5^{n}M(\alpha)^{6}. $$

By using the equality \eqref{Delta=....}, we obtain $$\left(|V(\mathfrak{a})||\mathrm{disc}(\beta)|^{1/2} \right)^{|\Lambda_{n-2}|} \le  2^{|T_{n}||\Lambda_{n-2}|} 5^{n}M(\alpha)^{6}$$
which proves the result.
\end{proof}

We now proceed to give estimate for the quantities involved in the bound from Proposition \ref{minoration de M(alpha)}.
The following lemma is an improvement of \cite[Lemma, p. 1615]{amoroso2018mahler}. 
\begin{Lemma}\label{bound-Gamma_n}
    For $n \ge 3$, we have $$|\Lambda_{n}| \ge \frac{n!}{8}.$$
\end{Lemma}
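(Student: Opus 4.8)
The plan is to count permutations in $\Lambda_n$, the set of permutations of $\mathfrak{S}_n$ with no orbits (cycles) of length $1$ or $2$, and show $|\Lambda_n| \ge n!/8$ for $n \ge 3$. These are sometimes called permutations whose cycle type uses only parts of size at least $3$. The most natural approach is inclusion–exclusion on the number of fixed points and $2$-cycles, but since we only need a lower bound, a cleaner route is a recursion or a direct comparison with the derangement numbers $D_n$ (permutations with no fixed points), for which good lower bounds are classical, together with a correction term removing the $2$-cycles.

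\medskip

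First I would set up the count. Write $a_n = |\Lambda_n|$, $D_n$ for the number of derangements, and more generally let $b_n$ be the number of permutations of $\mathfrak{S}_n$ with no fixed points and no $2$-cycles, so $b_n = a_n$. Conditioning on the cycle containing the element $n$: if that cycle has length $k$ (with $3 \le k \le n$), there are $\binom{n-1}{k-1}(k-1)!$ ways to choose and arrange it, and $a_{n-k}$ ways to complete (with the convention $a_0 = 1$, $a_1 = a_2 = 0$). This gives the recursion
\[
a_n = \sum_{k=3}^{n} \frac{(n-1)!}{(n-k)!}\, a_{n-k}.
\]
One can then prove $a_n \ge n!/8$ by induction. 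The base cases $n = 3$ ($a_3 = 2$, and $3!/8 = 3/4$), $n = 4$ ($a_4 = 6$ — only the two $4$-cycles giving... actually $a_4 = 3!\,= 6$ four-cycles? there are $3! = 6$ four-cycles, so $a_4 = 6 \ge 3$), $n = 5$ ($a_5 = $ four-cycles times fixed — no; $a_5$ counts $5$-cycles only, $= 4! = 24 \ge 120/8 = 15$), $n = 6$ ($a_6 = $ six-cycles $+$ pairs of $3$-cycles $= 5! + \tfrac{1}{2}\binom{6}{3}(2!)^2 = 120 + 40 = 160 \ge 720/8 = 90$) can be checked by hand. For the inductive step, keeping only the $k = 3$ and $k = n$ terms already gives $a_n \ge (n-1)(n-2)\,a_{n-3} + (n-1)!$; using the inductive hypothesis $a_{n-3} \ge (n-3)!/8$ on the first term yields $a_n \ge (n-1)!/8 + (n-1)! \ge n!/8$ as soon as $(n-1)!(1/8 + 1) \ge n!/8$, i.e. $9/8 \ge n/8$, which fails for large $n$ — so this crude truncation is not enough and I would instead keep more terms or argue differently.

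\medskip

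A more robust version of the inductive step: from the recursion, factor out and compare $a_n$ with $(n-1)a_{n-1}$ plus lower-order corrections, or directly use the identity $a_n = (n-1)a_{n-1} + (n-1)(n-2)a_{n-3}$ — this three-term recurrence (analogous to $D_n = (n-1)(D_{n-1}+D_{n-2})$) can be derived by a standard "where does $n$ go / is $n$ in a $3$-cycle" bijective argument. Granting this recurrence, induction is immediate: assuming $a_{n-1} \ge (n-1)!/8$, we get $a_n \ge (n-1)\cdot(n-1)!/8 + (\text{nonneg}) \ge$ ... wait, $(n-1)(n-1)!/8$ need not be $\ge n!/8$. So one genuinely needs the second term. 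Using $a_{n-1} \ge (n-1)!/8$ and $a_{n-3} \ge (n-3)!/8$,
\[
a_n \ge \frac{(n-1)(n-1)!}{8} + \frac{(n-1)(n-2)(n-3)!}{8} = \frac{(n-1)!}{8}\bigl((n-1) + 1\bigr) = \frac{n!}{8},
\]
which closes the induction cleanly. So the real content is establishing the recurrence $a_n = (n-1)a_{n-1} + (n-1)(n-2)a_{n-3}$ and checking enough small cases (up to $n=5$, say) to start the induction.

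\medskip

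The main obstacle is proving that three-term recurrence rigorously: the bijective/combinatorial argument requires care about the case where removing $n$ (or the $3$-cycle through $n$) produces a permutation that might violate the "no $1$- or $2$-cycle" condition. Concretely: if $n$ lies in a cycle of length $\ge 4$, deleting $n$ from that cycle gives a valid element of $\Lambda_{n-1}$ on $\{1,\dots,n-1\}$ relabeled, and this accounts for most cases but not bijectively on the nose (the map $\Lambda_n \to \{(\text{element of }\Lambda_{n-1}, \text{insertion slot})\}$ needs the target restricted to avoid creating a $2$-cycle — inserting $n$ into a fixed point of an element of... but there are no fixed points). If $n$ lies in a $3$-cycle $(n\ i\ j)$, removing that cycle leaves an element of $\Lambda_{n-3}$, and there are $(n-1)(n-2)$ choices of the ordered pair... this over/undercounts and must be reconciled. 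I would likely sidestep the exact recurrence and instead just prove the inequality version: $a_n \ge (n-1)a_{n-1}$ (by the insertion map: insert $n$ into any of the $n-1$ positions along the cycles of a permutation in $\Lambda_{n-1}$, which never creates a short cycle and is injective) — but again $(n-1)a_{n-1} \ge n!/8$ fails to follow from $a_{n-1}\ge (n-1)!/8$. Thus the second term is essential, and the cleanest honest path is: (i) prove $a_n = (n-1)a_{n-1} + (n-1)(n-2)a_{n-3}$ via the standard argument (element $n$ is either in a long cycle — contract it, giving $(n-1)a_{n-1}$ after handling that contracting a $3$-cycle through $n$ would create nothing bad only if... hmm) — honestly the reference [Lemma, p. 1615] of Amoroso presumably has a weaker bound like $n!/16$ or similar, and the improvement here to $n!/8$ is what we must nail, so I would budget the bulk of the writing for verifying this recurrence and the base cases $n \in \{3,4,5\}$ carefully, then the induction is one line.
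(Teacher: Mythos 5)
Your approach is correct and genuinely different from the paper's. The paper proves the bound by inclusion--exclusion on the number of fixed points and transpositions, obtaining $|\Lambda_n|\ge d_n-\binom{n}{2}d_{n-2}$ where $d_n$ is the derangement number, then substitutes the explicit formula $d_n=n!\sum_{k=0}^n(-1)^k/k!$ and estimates the resulting alternating series. You instead prove the exact three-term recurrence $a_n=(n-1)a_{n-1}+(n-1)(n-2)a_{n-3}$ (with $a_n=|\Lambda_n|$) and run a clean induction. Your recurrence is correct, and you should commit to it without the hedging: if $n$ lies in a cycle of length $\ge 4$, deleting $n$ yields a cycle of length $\ge 3$, so the resulting permutation lies in $\Lambda_{n-1}$, and this deletion is inverted by the $(n-1)$ insertions of $n$ into an element of $\Lambda_{n-1}$ --- insertion into a cycle of length $\ge 3$ produces one of length $\ge 4$, so no $2$-cycle can ever arise, contrary to your worry; this gives a genuine bijection and the term $(n-1)a_{n-1}$. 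If $n$ lies in a $3$-cycle $(n\ i\ j)$, there are $(n-1)(n-2)$ ordered pairs $(i,j)$ and the rest is an arbitrary element of $\Lambda_{n-3}$, giving $(n-1)(n-2)a_{n-3}$. Your inductive step is then an identity, not an estimate: $(n-1)\cdot\frac{(n-1)!}{8}+(n-1)(n-2)\cdot\frac{(n-3)!}{8}=\frac{(n-1)!}{8}\bigl((n-1)+1\bigr)=\frac{n!}{8}$. The base cases $n=3,4,5$ (values $2,6,24$ against $3/4,3,15$) suffice, since for $n\ge 6$ both $n-1$ and $n-3$ are at least $3$. Your route is arguably cleaner than the paper's: it avoids the derangement formula and the alternating-series estimate entirely, and the equality in the inductive step shows the constant $1/8$ is the natural one for this recursion.
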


\begin{proof}
    
Let $d_{n}$ be the $n$-th derangement number, i.e., the number of permutations of $\Sgothique_{n}$ without fixed points. It is known that (\cite[Corollary on p. 163]{derangement}) $$d_{n}=n!\sum_{k=0}^{n}\frac{(-1)^{k}}{k!}.$$

    By the inclusion–exclusion principle, we have $$|\Lambda_{n}| \ge d_{n}-\frac{n(n-1)}{2}d_{n-2}.$$ Letting $$ f(n)=\frac{\left(d_{n}-\frac{n(n-1)}{2}d_{n-2} \right)}{n!},$$ we want to show that $ f(n)\geq f(4)={1}/{8}$ for all $n\geq 3$.  
    We can check it numerically for $n=3,4,5$. Therefore we may assume $n\geq 6$.
We have
\begin{equation*}
        f(n)= \frac{1}{2}\sum_{k=0}^{n-2}\frac{(-1)^{k}}{k!}+\frac{(-1)^{n-1}}{(n-1)!}+\frac{(-1)^{n}}{n!} 
        \ge \frac{1}{2}\sum_{k=0}^{n-2}\frac{(-1)^{k}}{k!}+\frac{1-n}{n!} 
    \end{equation*}
    where, for the last inequality, a proof by exhaustion based on the parity of $n$ is sufficient.
The study of alternating series gives $$\sum_{k=4}^{n-2}\frac{(-1)^{k}}{k!} \ge 0$$ or equivalently, $$\sum_{k=0}^{n-2}\frac{(-1)^{k}}{k!} \ge \sum_{k=0}^{3}\frac{(-1)^{k}}{k!}.$$
    Additionally, we observe that the sequence $(\frac{1-n}{n!})_{n \ge 2}$ is increasing. Thus for $n \ge 6$,  we have $$f(n) \ge  \frac{1}{2}\sum_{k=0}^{3}\frac{(-1)^{k}}{k!}+\frac{1-6}{6!}=\frac{23}{144} \ge \frac{1}{8} .$$ 
  
This concludes the proof.
\end{proof}

\begin{proof}[Proof of point $(2)$ of Theorem \ref{thm 1.2}]
Notice that the statement is trivial for $5\le n \le 9$ as $M(\alpha)$ is always bigger or equal than 1.
So, from now on, we can assume that $n\geq 10$.  

By Lemma \ref{bound-Gamma_n}, since $n-2 \ge 3$, we have  \begin{equation}\label{size-Gamma-n}|\Lambda_{n-2}| \ge \frac{(n-2)!}{8}.\end{equation}
Also, one has
\begin{equation}\label{bound-T_n}
 |T_{n}|=\frac{n(n-1)}{2} \text{ and } |\mathrm{disc(\beta)}| \ge 1.
\end{equation}

Now, in order  to use  Proposition \ref{minoration de M(alpha)} to lower bound $M(\alpha)$, we are left to estimate $|V(\mathfrak{a})|$. Notice that this quantity is zero precisely when the $a_i$'a are not all distinct, in which case  the estimate in Proposition \ref{minoration de M(alpha)} is trivial.
 We then carry on the proof distinguishing two cases.

\subsubsection*{Case (a): all $a_i$'s are distinct}
In this case, like in \cite[p. 1616]{amoroso2018mahler}, we are going to establish a lower bound for $|V(\mathfrak{a})|\neq 0$. Since $|V(\mathfrak{a})|$ is symmetric with respect to $a_{i}$, we may assume for simplicity that $a_{j+1} - a_{j} \geq 1$ for $j = 1, \dots, n-1$.
 Then, we have 

\begin{equation*}|V(\mathfrak{a})|=\prod_{i=1}^{n}\prod_{j=i+1}^{n}|a_{j}-a_{i}| \ge \prod_{i=1}^{n}\prod_{j=i+1}^{n}(j-i)=\prod_{h=1}^{n-1}h!.\end{equation*}

By  \cite[Lemma 1, p. 84]{voutier} we have
\begin{equation*}|V(\mathfrak{a})|\ge 
    \prod_{h=1}^{n-1}h! \ge \exp \left( \frac{n^{2} \log(n)}{2}- \frac{3n^{2}}{4} \right).
\end{equation*}

We notice that, as $n \ge 10$, we have $$2^{-\frac{n(n-1)}{2}} \exp \left( \frac{n^{2} \log(n)}{2}- \frac{3n^{2}}{4} \right) \ge 1.$$ 

Therefore, using this, \eqref{size-Gamma-n} and \eqref{bound-T_n}, 
 Proposition \ref{minoration de M(alpha)} gives
\begin{align*}
    M(\alpha)^{\frac{2}{n!}}
  &\ge \left( 5^{-\frac{n}{6}} \left(2^{-\frac{n(n-1)}{2}}\exp \left( \frac{n^{2} \log(n)}{2}- \frac{3n^{2}}{4} \right) \right)^{\frac{(n-2)!}{48}}\right)^{\frac{2}{n!}}\\
    &= 5^{-\frac{1}{3(n-1)!}} 2^{-\frac{1}{48}}\exp \left( \frac{n}{n-1} \left( \frac{ \log(n)}{48}- \frac{1}{32} \right) \right).
\end{align*}

Notice that, as $n \ge 10$, we have $ \frac{ \log(n)}{48}\ge \frac{1}{32}$ and so
\begin{equation*}
    \exp \left( \frac{n}{n-1} \left( \frac{ \log(n)}{48}- \frac{1}{32} \right) \right) \ge \exp \left(  \frac{ \log(n)}{48}- \frac{1}{32} \right).
\end{equation*}

Finally
\begin{align*}
    M(\alpha)^{\frac{2}{n!}} &\ge 5^{-\frac{1}{9!3}} 2^{-\frac{1}{48}}\exp \left( \frac{ \log(n)}{48}- \frac{1}{32}  \right) 
    \ge \left(\frac{n}{9} \right)^{\frac{1}{240}}.
\end{align*}

\subsubsection*{Case (b) : the $a_i$'s are not all distinct}

Now we suppose that two of the $a_{i}$'s are equal, for instance $a_{1}=a_{2}$.  We set $$r=2\max_{1\le i,j \le n}(|a_{j}-a_{i}|)$$
and  $$\alpha'=\alpha+r(\beta_{1}-\beta_{2}).$$

Clearly \[\alpha'=\sum_{i=1}^{n}a_{i}'\beta_{i}\] where  $a_{1}'=a_{1}+r \in \Z$, $a_{2}'=a_{2}-r \in \Z$, and $a_{i}'=a_{i} \in \Z$ for all $i\ge 3$.

Since $a_{1}=a_{2}$ and $\alpha$ is a generator of $\Q(\beta_{1},\dots,\beta_{n}) / \Q$, we know, by point \eqref{thm 1.2-it1} of Theorem \ref{thm 1.2}, that the $a_{i}$'s are all distinct for $3\le i \le n$. 

The same holds for the $a_{i}'$'s for $3 \le i \le n$. Of course, we have $a_{1}' \neq a_{2}'$. Then, by the choice of $r$, we also have $a_{1}',a_{2}' \notin \{a_{i}' \mid 3 \le i \le n \}$. 

Hence, again by point \eqref{thm 1.2-it1} of Theorem \ref{thm 1.2}, $\alpha'$ is a generator of $\Q(\beta_{1},\dots,\beta_{n}) / \Q$ and the integers $a_1',\ldots,a_n'$ are all distinct.

 Thus, according to  case (a) of the proof, we have 
\begin{equation}\label{M(alpha')}M(\alpha')^{\frac{2}{n!}} \ge 5^{-\frac{1}{9!3}}  2^{-\frac{1}{48}}n^{\frac{1}{48}} \exp \left(- \frac{1}{32} \right).\end{equation}

We now want to compare $M(\alpha)$ and $M(\alpha')$. We will  need the following result, which plays here the role of Lemma \ref{M(tau(alpha)) en fonction de M(alpha)}.
\begin{Lemma} \label{(a_i-a_j)(beta_i-beta_j)}
    Let $n\geq 5$ and $i,j,k,l \in \{1, \dots,n \}$. Then $$h(\alpha+2(a_{i}-a_{j})(\beta_{k}-\beta_{l})) \le 5h(\alpha)+\log(16).$$
\end{Lemma}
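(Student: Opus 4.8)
The plan is to mimic the proof of Lemma \ref{M(tau(alpha)) en fonction de M(alpha)}, namely to express $\alpha+2(a_i-a_j)(\beta_k-\beta_l)$ as a short $\Z$-linear combination of Galois conjugates of $\alpha$ (i.e.\ of the $\sigma(\alpha)$ for $\sigma\in\Agothique_n$) and then invoke the elementary height inequalities $h(x_1+\dots+x_m)\le \sum h(x_i)+\log m$ and $h(\sigma(\alpha))=h(\alpha)$ for $\sigma$ in the Galois group. If I can write $\alpha+2(a_i-a_j)(\beta_k-\beta_l)$ as a sum of five terms each equal to $\pm\sigma(\alpha)$ with $\sigma\in\Agothique_n$, then $h$ of it is at most $5h(\alpha)+\log 5\le 5h(\alpha)+\log 16$, which is even a bit stronger than claimed; the slack up to $\log 16$ presumably accommodates the degenerate cases below.

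The key computation, exactly as in Lemma \ref{M(tau(alpha)) en fonction de M(alpha)}, is the following. First, for a $3$-cycle $\sigma=(k\;m\;l)$ (choosing $m\notin\{k,l\}$, possible since $n\ge 5$) we have $\alpha-\sigma(\alpha)=\beta_k(a_k-a_m)+\beta_m(a_m-a_l)+\beta_l(a_l-a_k)$; iterating with a suitable second $3$-cycle $\sigma'$ and conjugating by an even permutation $\delta$ as in \eqref{alpha-tau-linear-comb}, one isolates a term of the shape $(\beta_k-\beta_l)\cdot(\text{difference of two }a\text{'s})$. Concretely, one reproduces the identity $\delta(\gamma-\sigma'(\gamma))=(\beta_k-\beta_l)(a_k-a_m)$ where $\gamma=\alpha-\sigma(\alpha)$, so that $(\beta_k-\beta_l)(a_k-a_m)$ is a $\Z$-combination of five conjugates of $\alpha$; scaling the target $(a_i-a_j)(\beta_k-\beta_l)$ against $(a_k-a_m)(\beta_k-\beta_l)$ is where the factor $2$ and the extra constant $\log(16)$ come in, since in general $a_k-a_m$ need not equal $a_i-a_j$ and one has to absorb a small rational/integer factor or handle the case $a_k=a_m$ separately.

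I would organize it as: (i) the non-degenerate case where we can pick indices so that the auxiliary difference of $a$'s is nonzero and the combinatorial identity from Lemma \ref{M(tau(alpha)) en fonction de M(alpha)} applies directly, writing $\alpha+2(a_i-a_j)(\beta_k-\beta_l)$ as at most five conjugates of $\alpha$ added together (with integer coefficients absorbed by splitting into more, but no more than giving $\log 16$); (ii) the degenerate cases: $k=l$ (then the added term is $0$ and there is nothing to prove beyond $h(\alpha)\le h(\alpha)$), or $a_i=a_j$ (added term $0$ again), or $\{k,l\}$ forcing an unlucky choice of the intermediate index — here one uses that $n\ge 5$ gives enough room to always choose the cycles $\sigma,\sigma',\delta$ inside $\Agothique_n$ with the required supports. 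The main obstacle I anticipate is bookkeeping the coefficient: the clean identity produces $(\beta_k-\beta_l)$ times a \emph{specific} difference $a_k-a_m$ of coordinates of $\mathfrak a$, not the prescribed $a_i-a_j$, so one must either argue these can be matched up to sign after relabelling, or bound $|a_i-a_j|$ by $2\max|a_p-a_q|$ and re-expand — which is exactly the role of the factor $2$ and of the loosened constant $\log(16)$ rather than $\log 5$. Once that accounting is set up, converting $h(\alpha')\le 5h(\alpha)+\log(16)$ into the stated inequality is immediate and the proof closes.
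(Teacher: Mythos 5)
Your plan correctly identifies the shape of the argument (reduce to a short $\mathbb{Z}$-linear combination of conjugates of $\alpha$ and apply elementary height inequalities), and you correctly locate the obstacle: a three-cycle built around the $\beta$-indices $(k,l)$ produces a term $(\beta_k-\beta_l)$ times a difference of $a$'s among the indices appearing in the cycle, not the prescribed $a_i-a_j$. However, the fix you sketch does not work and cannot work. You propose to ``bound $|a_i-a_j|$ by $2\max|a_p-a_q|$ and re-expand'' or to ``absorb a small rational/integer factor.'' Neither is available: the ratio $(a_i-a_j)/(a_k-a_m)$ is an arbitrary rational number (possibly undefined), and the standard height inequality $h\bigl(\sum b_p x_p\bigr)\le\sum h(x_p)+\log\bigl(\sum|b_p|\bigr)$ requires exact integer coefficients; you cannot replace a coefficient by an upper bound and absorb a potentially unbounded discrepancy into a fixed constant $\log 16$. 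Relabelling does not help either, because the permutation action reshuffles the $\beta$-subscripts while leaving the $a$-subscripts attached to their slots, so the $a$-difference produced by a cycle is forced by the cycle's support.

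The paper's proof avoids this entirely by decoupling the two index pairs. It takes the transposition $\tau=(i\;j)$ indexed by the \emph{$a$-indices}, so that $\alpha-\alpha_\tau=(a_i-a_j)(\beta_i-\beta_j)$ with the correct $a$-coefficient automatically, then reuses the identity \eqref{alpha-tau-linear-comb} to write $\alpha-\alpha_\tau$ as $\sigma_1(\alpha)+\sigma_2(\alpha)-\sigma_3(\alpha)-\sigma_4(\alpha)$ with $\sigma_p\in\Agothique_n$, and finally applies an explicit $\sigma\in\Agothique_n$ with $\sigma(\beta_i-\beta_j)=\beta_k-\beta_l$ (such $\sigma$ exists because $n\ge5$) to move the $\beta$-indices. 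The constant $\log 16=4\log 2$ comes purely from this exact five-term decomposition and the factor $2$, not from any approximation of coefficients. You should redirect your construction: fix $\tau=(i\;j)$ (from the $a$-indices $i,j$, not the $\beta$-indices $k,l$), then transport the resulting $(\beta_i-\beta_j)$ to $(\beta_k-\beta_l)$ by a Galois element, and only then invoke the height inequalities.
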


\begin{proof}
We may suppose $i \neq j$ and $k \neq l$ otherwise the result is trivial. Let $\tau=(i \; j)\in T_n$. 
Then $$\alpha-\alpha_{\tau}=(a_{i}-a_{j})(\beta_{i}-\beta_{j}).$$
By \eqref{alpha-tau-linear-comb} from Lemma \ref{M(tau(alpha)) en fonction de M(alpha)} we also have
    $$\alpha-\alpha_{\tau}=\sigma_{1}(\alpha)+\sigma_{2}(\alpha)-\sigma_{3}(\alpha)-\sigma_{4}(\alpha)$$
for some $\sigma_{1},\sigma_{2},\sigma_{3},\sigma_{4} \in \Agothique_{n}$.

 As $n\geq 5$, we can check that there is $\sigma \in \Agothique_{n}$ such that $\sigma(\beta_{i}-\beta_{j})=\beta_{k}-\beta_{l}$ (if $i\neq k$ and $j\neq \ell$ one can simply take $\sigma=(i \; k)(j \; \ell)$, while if $i=k$ and $j\neq \ell$ one can take $\sigma=(j,\ell)(s,t)$ for some $s\neq t$ and $s,t\not\in\{k=i,j,\ell\}$). 
By properties of the height, we have
    \begin{align*}
        h(\alpha+2(a_{i}-a_{j})(\beta_{k}-\beta_{l}))&=h(\alpha+2(\alpha-\alpha_{\tau}))\\
       &= h(\alpha+2(\sigma_{1}(\alpha)+\sigma_{2}(\alpha)-\sigma_{3}(\alpha)-\sigma_{4}(\alpha))) \\
   &\le 5h(\alpha)+\log(16).
    \end{align*}
\end{proof}
Returning to our proof, by Lemma \ref{(a_i-a_j)(beta_i-beta_j)} 
\begin{align*}
    h(\alpha')&=h\left(\alpha+2\max_{1\le i,j \le n}(|a_{j}-a_{i}|)(\beta_{1}-\beta_{2}) \right) \le 5h(\alpha)+\log(16)
\end{align*}
hence $M(\alpha') \le 16^{n}M(\alpha)^{5}$.
Using also \eqref{M(alpha')}, we get
$$16^{\frac{2}{(n-1)!}}M(\alpha)^{\frac{10}{n!}} \ge 5^{-\frac{1}{9!3}}  2^{-\frac{1}{48}}n^{\frac{1}{48}} \exp \left(- \frac{1}{32} \right).$$ 

Finally, recalling that $n \ge 10$, we get
\begin{align*}
    M(\alpha)^{\frac{2}{n!}} &\ge 16^{-\frac{2}{9!5}} 5^{-\frac{2}{9!30}} 
    2^{-\frac{1}{240}}n^{\frac{1}{240}} \exp \left(- \frac{1}{160} \right) \ge \left(\frac{n}{9} \right)^{\frac{1}{240}}.
\end{align*}

\end{proof}

\section{Some examples and final remarks}\label{section-examples} 
In this section we give some application of our results to explicit families of algebraic numbers generating Galois extensions of $\Q$ with groups $\mathfrak{A}_{n}$. We also discuss the adaptability of the proof strategy of our main results to generators of Galois extensions having group other than $\Agothique_n$ or $\Sgothique_n$.
\subsection{On certain Laguerre polynomials}
A family of particular interest for our applications is that of generalized Laguerre polynomials, defined, for $n \ge 1$ and $\alpha \in \mathbb{Q}$, as
\begin{equation*}
L_{n}^{(\alpha)}(x) = \sum_{j=0}^{n} \binom{n+\alpha}{n-j} \frac{(-x)^{j}}{j!} 
\end{equation*}
where
\begin{equation*}
\binom{n+\alpha}{n-j} = \frac{(n+\alpha)(n-1+\alpha) \dots (j+1+\alpha)}{(n-j)!j!}.
\end{equation*}
This family, first studied by  Schur, has received the attention of many authors (see for instance \cite{LaguerreExemple1}, \cite{LaguerreExemple2}, \cite{LaguerreExemple3}, \cite{LaguerreExemple4}, \cite{LaguerreExemple5}, and \cite{LaguerreExemple6} for some reference on the topic).

The following result considers a special family of Laguerre polynomials having Galois group $\Agothique_n$:
\begin{prop}\label{laguerre-ex}
Let $n\geq 1$ be an integer divisible by $4$ and  let $\beta_{n,1},\ldots,\beta_{n,n}$ be the roots of the polynomial  $$L_{n}^{(-n-1)}(x)=1+x+\frac{x^{2}}{2}+\dots+\frac{x^{n}}{n!}.$$ Let $a_1,\ldots,a_n$ be integers such that $|\{a_1,\ldots,a_n\}|\geq n-1$. Then:
\begin{enumerate}[(a)] 
\item\label{part-a} If  $\alpha_n=\beta_{n,1}^{a_1}\cdots\beta_{n,n}^{a_n}$ and $\sum_{i=0}^n a_i\neq 0$ one has that $\Q(\alpha_n)/\Q$ is a Galois extension with Galois group $\Agothique_n$ and
$h(\alpha_n)$ tends to infinity with $n$.
\item\label{part-b} 
If $\alpha'_n=a_1 \beta_{n,1}+\ldots+a_n \beta_{n,n}$, then $\Q(\alpha'_n)/\Q$ is a Galois extension with Galois group $\Agothique_n$ and $h(\alpha'_n) $ tends to infinity with $n$.
\end{enumerate}
\end{prop}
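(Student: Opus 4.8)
The plan is to deduce both statements directly from the main theorems of the paper, the only external ingredient being Schur's classical analysis of the truncated exponential $L_n^{(-n-1)}(x)=\sum_{j=0}^{n}x^j/j!$. First I would record the relevant arithmetic of $P_n(x):=n!\,L_n^{(-n-1)}(x)=\sum_{j=0}^{n}\frac{n!}{j!}\,x^{j}$: since $n!/j!\in\Z$ for $0\le j\le n$ and the leading coefficient is $1$, $P_n$ is monic with integer coefficients, so every root $\beta_{n,i}$ is an algebraic integer, and since the constant term is $n!$ the product of the roots equals $(-1)^n n!=n!$ (recall $n$ is even). Hence $N_{\Q(\beta_{n,1})/\Q}(\beta_{n,1})=n!$, so $\beta_{n,1}$ is not a unit and $\log\bigl|N_{\Q(\beta_{n,1})/\Q}(\beta_{n,1})\bigr|=\log(n!)$. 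By Schur's theorem, for $4\mid n$ the Galois group of the splitting field of $L_n^{(-n-1)}$ over $\Q$ is $\mathfrak{A}_n$; since $\mathfrak{A}_n$ acts transitively on the $n$ distinct roots, $L_n^{(-n-1)}$ is irreducible and each $\beta_{n,i}$ has degree $n$. As the conclusions concern a limit, I may discard the finitely many $n<8$ and assume $n\ge 8$, so that $n\ge 5$ and the hypotheses of Theorems \ref{generalresult}, \ref{thm 1.1} and \ref{thm 1.2} are all in force.

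For part \eqref{part-a}: the assumption $|\{a_1,\dots,a_n\}|\ge n-1$ is equivalent to there being at most two coinciding indices among the $a_i$, so by Theorem \ref{thm 1.1}, point \eqref{thm1.1-it1}, the element $\alpha_n=\beta_{n,1}^{a_1}\cdots\beta_{n,n}^{a_n}$ generates $\Q(\beta_{n,1},\dots,\beta_{n,n})$, which is the splitting field of $L_n^{(-n-1)}$ and therefore a Galois extension of $\Q$ with group $\mathfrak{A}_n$. Applying Theorem \ref{generalresult} with $\beta=\beta_{n,1}$, and using $\sum_{i=1}^{n} a_i\neq 0$ so that $\bigl|\sum_{i=1}^n a_i\bigr|\ge 1$, gives
\[
h(\alpha_n)\ \ge\ \frac{1}{n}\Bigl|\sum_{i=1}^{n}a_i\Bigr|\,\log(n!)\ \ge\ \frac{1}{n}\log(n!)\ =\ \frac{1}{n}\sum_{k=1}^{n}\log k,
\]
and the right-hand side tends to $+\infty$ with $n$ (by Stirling it is $\log n-1+o(1)$), proving the claim.

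For part \eqref{part-b}: the same hypothesis on the $a_i$'s together with Theorem \ref{thm 1.2}, point \eqref{thm 1.2-it1}, shows that $\alpha'_n=a_1\beta_{n,1}+\dots+a_n\beta_{n,n}$ generates $\Q(\beta_{n,1},\dots,\beta_{n,n})$, again a Galois extension of $\Q$ with group $\mathfrak{A}_n$; then Theorem \ref{thm 1.2}, point \eqref{thm 1.2-it2}, gives $h(\alpha'_n)\ge\frac{1}{240}\log(n/9)\to\infty$. The only genuine obstacle is the input from Schur, namely the irreducibility of $L_n^{(-n-1)}$ and the determination of its Galois group as exactly $\mathfrak{A}_n$ when $4\mid n$; one should also check that the normalisation $L_n^{(-n-1)}(x)=\sum_{j=0}^{n}x^j/j!$ is indeed the one to which Schur's computation applies. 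Everything else is a direct substitution into the theorems already established.
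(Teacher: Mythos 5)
Your proposal is correct and follows essentially the same route as the paper: Schur's determination of the Galois group as $\mathfrak{A}_n$ for $4\mid n$, the observation that $N_{\Q(\beta)/\Q}(\beta)=n!$, then Theorem~\ref{thm 1.1}\,\eqref{thm1.1-it1} plus Theorem~\ref{generalresult} for part~\eqref{part-a} and Theorem~\ref{thm 1.2} for part~\eqref{part-b}. The only differences are presentational — you spell out the clearing of denominators to see the roots are algebraic integers and explicitly discard the finitely many $n<8$ — which the paper leaves implicit.
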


\begin{proof}
Schur showed in \cite{troncatureEXP} that $L_{n}^{(-n-1)}(x)$ is irreducible and its Galois group over $\mathbb{Q}$ is $\Agothique_n$ if $n$ is divisible by $4$ (while it is $\Sgothique_n$ otherwise).
Let $\beta_n$ be one of the roots of $L_{n}^{(-n-1)}(x)$.

We start by proving part \eqref{part-a}. By point \eqref{thm1.1-it1} of Theorem \ref{thm 1.1}, we know that $\Q(\alpha)$ is the splitting field of $L_{n}^{(-n-1)}(x)$ over $\Q$. This proves the statement on the Galois group.
Notice also that that $N_{\Q(\beta)/\Q}(\beta)=n!$, hence, by Theorem \ref{generalresult}, $$M(\alpha)^{1/[\Q(\alpha):\Q]} \ge (n!)^{\frac{1}{n}\left |\sum_{i=1}^{n}a_{i} \right |}$$
and, since $\sum_{i=1}^{n}a_{i} \neq 0$ 
we deduce the statement on $h(\alpha_n)$. 

To prove part \eqref{part-b}, we simply notice that all hypotheses of Theorem \ref{thm 1.2} are satisfied, and we can conclude straightforwardly. 
\end{proof}

\begin{rmq}
We remark that our Theorem \ref{thm 1.1} cannot be applied to any Laguerre polynomial, while  \cite[Theorems 1.1 and 1.2]{amoroso2018mahler} can. Indeed, to apply point \eqref{thm1.1-it3} of Theorem \ref{thm 1.1}, we need that $\beta$ is a unit. However the roots of $L_{n}^{(\alpha)}(x)$ are units if and only if $\alpha=0$, and it follows from  \cite{IrreductibilitePolynomeLaguerreGeneralise} and  \cite{criterePolynomeLaguerreGeneralise}
that, for all but finitely many integers $n$, $L_{n}^{(\alpha)}(x)$ is irreducible over $\mathbb{Q}$ and its Galois group is $\Sgothique_{n}$.
\end{rmq}

\subsection{On the adaptation of the strategy to other subgroups of $\Sgothique_{n}$} \label{section adaptation de la methode}
One might question whether the proofs of  Theorems \ref{thm 1.1} and  \ref{thm 1.2}, as well as their counterparts for $\Sgothique_{n}$
in \cite{amoroso2018mahler}, could be modified to identify generators of Galois extensions with different Galois groups.

\subsubsection*{On the proof of  point \eqref{thm1.1-it1}} We first notice that, in all the above cited results, the necessary and sufficient conditions on the $a_i$'s so that $\Q(\alpha)/\Q$ is a Galois extension of group $G\in \{\Sgothique_{n},\Agothique_{n}\}$ can be rephrased by saying that $|\{a_1,\ldots,a_n\}|\geq n-k+1$ where $k$ is the index of $G$ in $\Sgothique_{n}$.
It might be tempting to inquire if a similar condition holds for other subgroups $G$ of $\Sgothique_{n}$ of index $2<k\le n$, but no such groups exist. Indeed, by \cite[Chap. 2, \S 85. Corollary p.64]{Allan-Clark}, $\Agothique_{n}$ is the only subgroup of $\Sgothique_{n}$ of index strictly smaller than $n$, while by \cite[Proposition 8.10]{perrin}, for $n\neq 6$ all the subgroups of $\Sgothique_{n}$ of index $n$ are of the form $\{\sigma\in\Sgothique_{n}\mid \sigma(i)=i\}$, hence they are not transitive.

A second observation is that our proofs also use the fact that if $G$ is either $\mathfrak{S}_{n}$ or $\mathfrak{A}_{n}$, then $G$ contains all $k$-cycles for some integer $2\le k \le n$ (noting that $k = 2$ for $\mathfrak{S}_{n}$ and $k = 3$ for $\mathfrak{A}_{n}$). However, if a subgroup $G$ of $\Sgothique_{n}$ contains all $k$-cycles for some $k$, then it contains the subgroup generated by all of them, which is normal in $\Sgothique_{n}$ (as $k$-cycles form a full conjugacy class). But, for $n\geq 5$, this implies that this subgroup is either $\Sgothique_{n}$ or $\Agothique_{n}$ ($\Sgothique_{n}$ if $k$ is even or $\Agothique_{n}$ otherwise), so in both cases, $G$ contains $\mathfrak{A}_{n}$.

\subsubsection*{On the proof of  point \eqref{thm1.1-it3}}
We notice that, to prove point  \eqref{thm1.1-it3} of Theorem  \ref{thm 1.1}, one crucial ingredient is Proposition \ref{(2.1)}.
Indeed, the remainder of the results can essentially be derived using the same methodology, irrespective of the specific subgroup of $\mathfrak{S}_n$ under consideration, by suitably accounting for the pertinent quantities. 

One point used in the proof of Proposition \ref{(2.1)} is that, if $G$ is either $\Sgothique_n$ or $\Agothique_n$ then, for any $A\subseteq \{1,...,n \}$, the stabilizer $\mathrm{Stab}_{G}{(A)}=\{\sigma \in G\mid  \sigma(A)=A\}$ is a transitive subgroup of $\Sgothique(A)\simeq \Sgothique_{|A|}$. 
However this property does not hold for other subgroups $G$ of $\Sgothique_n$ when $n$ is big enough.

Indeed, notice first that any transitive subgroup of $\Sgothique_{|A|}$ has a cardinality divisible by $|A|$, so $|G|$ must be divisible by $\mathrm{lcm}(1,\dots,n)$. In particular $|G|\ge 2^{n-1}$.

Moreover, it is easy to check that $G$ must be a $2$-transitive subgroup of $\Sgothique_{n}$. Indeed, let $(x_{1}, y_1), (x_{2},y_{2}) \in \{1,\dots,n\}^2$ be such that $x_{1} \ne y_{1}$ and $x_{2} \ne y_{2}$. Then there exists $g\in G$ such that $g(x_1)=x_2$ and $g(y_1)=y_2$ where
\begin{itemize}
\item[-] if $y_{1} \neq x_{2}$, we can take $g=\gamma \sigma$ with $\sigma \in \mathrm{Stab}_{H}(\{y_{1}\})$ such that $\sigma(x_{1})=x_{2}$ and $\gamma \in \mathrm{Stab}_{H}(\{x_{2}\})$ such that $\gamma(y_{1})=y_{2}$ (one can treat  similarly the case $x_1\neq y_2$).
\item[-] If $y_{1}=x_{2}$ and $x_{1}=y_{2}$, we can take $g \in \mathrm{Stab}_G(\{x_1,x_2\}) \setminus \{\mathrm{Id}\} $, which is not empty by assumption on $G$. 
\end{itemize} 
Now, by the main result in \cite{2-transitif}, either $G$ contains $\Agothique_{n}$ or $|G|\leq \exp (\exp (1.18 \sqrt{\log n}))$ for all $n\geq 5\cdot 10^5$, contradicting $|G|\ge 2^{n-1}$ for $n$ big enough.

\section*{Acknowledgments} 
I am very grateful to the anonymous referee for their careful reading of a previous version of this article and their insightful remarks.

\bibliographystyle{alpha}

\begin{thebibliography}{BFFL13}

\bibitem[AD99]{Amoroso_David-LehmerPbEnDimensionSuperieur}
Francesco Amoroso and Sinnou David.
\newblock The higher-dimensional {Lehmer} problem.
\newblock {\em J. Reine Angew. Math.}, 513:145--179, 1999.

\bibitem[AD00]{Amoroso_Robert-LowerBoundForTheHeightInAbelianExtensions}
Francesco Amoroso and Roberto Dvornicich.
\newblock A lower bound for the height in abelian extensions.
\newblock {\em J. Number Theory}, 80 (2):260--272, 2000.

\bibitem[AM16]{Amo_MasserheightInGaloisExtensions}
Francesco Amoroso and David Masser.
\newblock Lower bounds for the height in {Galois} extensions.
\newblock {\em Bull. Lond. Math. Soc.}, 48 (6):1008--1012, 2016.

\bibitem[Amo18]{amoroso2018mahler}
Francesco Amoroso.
\newblock Mahler measure on {Galois} extensions.
\newblock {\em Int. J. Number Theory}, 14 (6):1605--1617, 2018.

\bibitem[APSV]{BIRSworkshop}
Francesco Amoroso, Igor Pritsker, Christopher Smyth, and Jeffrey Vaaler.
\newblock Appendix to report on birs workshop 15w5054 on the geometry, algebra and analysis of algebraic numbers: Problems proposed by participants, available from: \url{http://www.birs.ca/workshops/2015/15w5054/report15w5054.pdf}.

\bibitem[AZ00]{Amoroso_Zannier-RelativeDobrowolskiLowerBoundOverAbelianExtensions}
Francesco Amoroso and Umberto Zannier.
\newblock A relative {Dobrowolski} lower bound over abelian extensions.
\newblock {\em Ann. Sc. Norm. Super. Pisa, Cl. Sci., IV. Ser.}, 29 (3):711--727, 2000.

\bibitem[AZ10]{Amo_Zannier-DihedralCase}
Francesco Amoroso and Umberto Zannier.
\newblock A uniform relative {Dobrowolski}'s lower bound over {Abelian} extensions.
\newblock {\em Bull. Lond. Math. Soc.}, 42 (3):489--498, 2010.

\bibitem[Bab82]{2-transitif}
Laszlo Babai.
\newblock On the order of doubly transitive permutation groups.
\newblock {\em Invent. Math.}, 65:473--484, 1982.

\bibitem[Ban14]{LaguerreExemple4}
Pradipto Banerjee.
\newblock On {Galois} groups of generalized {Laguerre} polynomials whose discriminants are squares.
\newblock {\em J. Number Theory}, 141:36--58, 2014.

\bibitem[BB20]{LaguerreExemple5}
Pradipto Banerjee and Ranjan Bera.
\newblock On a generalization of a conjecture of {Grosswald}.
\newblock {\em J. Number Theory}, 216:216--241, 2020.

\bibitem[BFFL13]{LaguerreExemple3}
Pradipto Banerjee, Michael Filaseta, Carrie~E. Finch, and J.~Russell Leidy.
\newblock On classifying {Laguerre} polynomials which have {Galois} group the alternating group.
\newblock {\em J. Th{\'e}orie des Nombres de Bordeaux.}, 25 (1):1--30, 2013.

\bibitem[BSK20]{PolyAvecCoefPetitGroupDeGaloisAn}
Lior Bary-Soroker and Gady Kozma.
\newblock Irreducible polynomials of bounded height.
\newblock {\em Duke Math. J.}, 169 (4):579--598, 2020.

\bibitem[BZ01]{bombieriZannier2001note}
Enrico Bombieri and Umberto Zannier.
\newblock A note on heights in certain infinite extensions of {{\(\mathbb Q\)}}.
\newblock {\em Atti Accad. Naz. Lincei, Cl. Sci. Fis. Mat. Nat., IX. Ser., Rend. Lincei, Mat. Appl.}, 12 (1):5--14, 2001.

\bibitem[CK92]{derangement}
Chuan-Chong Chen and Khee-Meng Koh.
\newblock {\em Principles and techniques in combinatorics}.
\newblock Singapore: World Scientific, 1992.

\bibitem[Cla84]{Allan-Clark}
Allan Clark.
\newblock {\em Elements of Abstract Algebra}.
\newblock Courier Corporation, 1984.

\bibitem[Dob79]{Dobrowolski}
Edward Dobrowolski.
\newblock On a question of {Lehmer} and the number of irreducible factors of a polynomial.
\newblock {\em Acta Arith.}, 34:391--401, 1979.

\bibitem[FKT12]{LaguerreExemple2}
Michael Filaseta, Travis Kidd, and Ognian Trifonov.
\newblock Laguerre polynomials with {Galois} group {{\(A_m\)}} for each {{\(m\)}}.
\newblock {\em J. Number Theory}, 132 (4):776--805, 2012.

\bibitem[FL02]{IrreductibilitePolynomeLaguerreGeneralise}
Michael~A. Filaseta and Tsit-Yuen Lam.
\newblock On the irreducibility of the generalized {Laguerre} polynomials.
\newblock {\em Acta Arith.}, 105 (2):177--182, 2002.

\bibitem[Gow89]{LaguerreExemple1}
Roderick Gow.
\newblock Some generalized {Laguerre} polynomials whose {Galois} groups are the alternating groups.
\newblock {\em J. Number Theory}, 31 (2):201--207, 1989.

\bibitem[Haj05]{criterePolynomeLaguerreGeneralise}
Farshid Hajir.
\newblock On the {Galois} group of generalized {Laguerre} polynomials.
\newblock {\em J. Th{\'e}orie des Nombres de Bordeaux.}, 17 (2):517--525, 2005.

\bibitem[JL22]{LaguerreExemple6}
Ankita Jindal and Shanta Laishram.
\newblock Families of {Laguerre} polynomials with alternating group as {Galois} group.
\newblock {\em J. Number Theory}, 241:387--429, 2022.

\bibitem[Per96]{perrin}
Daniel Perrin.
\newblock {\em Cours d'algèbre}.
\newblock World Scientific, 1996.

\bibitem[Sch30]{troncatureEXP}
Issai Schur.
\newblock Gleichungen ohne {Affekt}.
\newblock {\em Sitzungsber. Preu{{\ss}}. Akad. Wiss., Phys.-Math. Kl.}, 1930:443--449, 1930.

\bibitem[Sch73]{Schinzel-TotalementReel}
Andrzej Schinzel.
\newblock On the product of the conjugates outside the unit circle of an algebraic number.
\newblock {\em Acta Arith.}, 24:385--399, 1973.

\bibitem[Smy86]{smyth1986additive}
Christopher~J. Smyth.
\newblock Additive and multiplicative relations connecting conjugate algebraic numbers.
\newblock {\em J. Number Theory}, 23:243--254, 1986.

\bibitem[Vou96]{voutier}
Paul~M. Voutier.
\newblock An effective lower bound for the height of algebraic numbers.
\newblock {\em Acta Arith.}, 74 (1): 81--95, 1996.

\end{thebibliography}

\end{document}